 \newtheorem*{akn}{Acknowledgments}
 \newenvironment{akn*}{\begin{akn}\em}{\end{akn}}
\definecolor{darkgreen}{rgb}{.1,.7,.3}
\numberwithin{equation}{section}
\newtheorem{theo}{Theorem}[section]
\newtheorem{lemma}[theo]{Lemma}
\newtheorem{cor}[theo]{Corollary}
\newtheorem{prop}[theo]{Proposition}
\newtheorem{dfntn}[theo]{Definition}
\newtheorem{rem}[theo]{Remark}
\newtheorem{claim}[theo]{Claim}
\newtheorem{ass}[theo]{Assumptions}
\newtheorem{case1}[theo]{Case I:}
\newtheorem{case3}[theo]{Cases}
\newtheorem{case2}[theo]{Case II:}
\newtheorem{ex}[theo]{Example}
\newtheorem{fact}[theo]{FACT}
\newtheorem{facts}[theo]{FACTS}
\newenvironment{rem*}{\begin{rem}\em}{\end{rem}}
\newenvironment{ex*}{\begin{ex}\em}{\end{ex}}
\newenvironment{claim*}{\begin{claim}\em}{\end{claim}}
\newenvironment{facts*}{\begin{facts}\em}{\end{facts}}
\newenvironment{fact*}{\begin{fact}\em}{\end{fact}}
\newenvironment{case1*}{\begin{case1}\em}{\end{case1}}
\newenvironment{case2*}{\begin{case2}\em}{\end{case2}}
\newcommand{\Pin}[1]{{\mathbb P}^{#1}}
\newcommand{\Projcal}[1]{\mathbb{ P}({\mathcal #1})}
\newcommand{\scrollcal}[1]{(\Projcal{#1},\tautcal{#1})}
\newcommand {\xel} {(X, L)}
\newcommand{\tautcal}[1]{{\mathcal O}_{\mathbb{P}({\mathcal#1})}(1)}
\newcommand{\num}{\equiv}
\newcommand{\Pp}{\mathbb P}
\newcommand{\Oc}{\mathcal O}
\newcommand{\FF}{\mathbb{F}}
\newcommand{\Ee}{\mathcal{E}_e}
\newcommand{\E}{\mathcal{E}}
\newcommand{\Ext}{{\rm Ext}}
\newcommand{\cD}{\mathcal{D}}
\newcommand{\cE}{\mathcal{E}}
\newcommand{\cL}{\mathcal{L}}
\newcommand{\cF}{\mathcal{F}}
\newcommand{\cG}{\mathcal{G}}
\newcommand{\cQ}{\mathcal{Q}}
\newcommand{\cU}{\mathcal{U}}
\newcommand{\cO}{\mathcal{O}}
\title[\tiny{On some ``sporadic" moduli spaces of Ulrich bundles on  some $3$-fold scrolls over ${\FF}_0$}]{On some ``sporadic" moduli spaces of Ulrich bundles on  some $3$-fold scrolls over ${\FF}_0$}
\author{Maria Lucia Fania}
\address{Maria Lucia Fania\\ Dipartimento di Ingegneria e Scienze dell'Informazione e Matematica\\Universit\`{a} degli Studi di L'Aquila\\
Via Vetoio Loc. Coppito\\67100 L'Aquila\\Italy}
\email{marialucia.fania@univaq.it}
\author{Flaminio Flamini}
\address{Flaminio Flamini\\Dipartimento di Matematica\\ Universit\`a degli Studi di Roma
Tor Vergata \\ Viale della Ricerca Scientifica, 1 - 00133 Roma\\Italy}
\email{flamini@mat.uniroma2.it}
\subjclass[2020]{Primary 14J30, 14J26, 14J60, 14C05; Secondary 14N30}
\keywords{Ulrich bundles, $3$-folds, ruled surfaces, moduli, deformations}
\thanks{The first author has been supported by PRIN 2017SSNZAW. The second author has been partially supported by  the MIUR Excellence Department Project MatMod@TOV, MIUR CUP-E83C23000330006, 2023-2027,  awarded to the Department of Mathematics, University of Rome Tor Vergata.}
\begin{document}



\maketitle


\begin{abstract} We investigate on the existence of some {\em sporadic} rank-$r \geqslant 1$ Ulrich vector bundles on suitable $3$-fold scrolls $X$ over the Hirzebruch surface $\mathbb{F}_0$, which arise as tautological embeddings of  projectivization of very-ample vector bundles  on $\mathbb{F}_0$ that are {\em uniform} in the sense of Brosius and Aprodu--Brinzanescu, cf. \cite{bro} and \cite{ApBr} respectively. 

Such Ulrich bundles arise as deformations of ``iterative" extensions by means of {\em sporadic} Ulrich line bundles which have been contructed in our former paper \cite{fa-fl2} (where instead higher-rank {\em sporadic} bundles were not investigated therein).  
We explicitely describe irreducible components of the corresponding {\em sporadic} moduli spaces of rank $r \geqslant 1$ vector bundles  which are Ulrich with respect to the tautological polarization on $X$. In some cases, such irreducible components turn out to be a singleton, in some other such components are generically smooth, whose positive dimension has been computed and whose general point turns out to be a slope-stable, indecomposable vector bundle. 
\end{abstract}


\section*{Introduction} Let $X$ be a smooth, irreducible projective variety of dimension $n \geqslant 1$ and let $\mathcal O_X(H)$ be a very ample line bundle on $X$. A vector bundle $\cU$  on $X$ is said to be \emph{Ulrich with respect to $\mathcal O_X(H)$} if it satisfies suitable cohomological conditions involving some multiples of the polarization $\mathcal O_X(H)$ (cf. e.g. Definition\;\ref{def:Ulrich} below and \cite[Thm.\;2.3]{b} for equivalent conditions).  

Ulrich bundles originally appeared in Commutative Algebra, in the paper \cite{U} by B. Ulrich, as bundles enjoying suitable extremal cohomological properties. After that, attention on Ulrich bundles entered in the realm of Algebraic Geometry with the paper \cite{ESW} where, among other things, the authors compute the Chow form of a projective variety $X$ under the assumption that $X$ supports Ulrich bundles of some rank $r$.  

In recent years there has been a huge amount of work on Ulrich bundles, investigating several questions on such bundles like e.g. their existence, the minimal rank for an Ulrich vector bundle supported by the pair $(X, \mathcal O_X(H))$ (such a minimal rank is also called the {\em Ulrich complexity} of $(X, \mathcal O_X(H))$ and denoted by $uc_{\mathcal O_X(H)}(X)$), as well as their {\em stability},  their moduli space structure, etcetera (for nice surveys the reader is referred to e.g. \cite{Co,CMP}). Although a lot has been proved for some specific classes of projective varieties (e.g. curves, Segre, Veronese, Grassmann varieties, rational normal 
scrolls, hypersurfaces, some classes of surfaces and threefolds, cf. e.g. \cite{b,c-h-g-s,Co,CMP} for overviews), several questions are still open in their full  generality even for surfaces. 

In our former paper \cite{fa-fl2}, we focused on $3$-fold scrolls $X= X_e$ arising as embedding, via very-ample tautological line bundles $\Oc_{\Pp(\Ee)}(1)$, of projective bundles $\Pp(\E_e)$, where $\mathcal E_e$ are very-ample rank-$2$ vector bundles on  $\mathbb{F}_e$ with Chern classes $c_1 (\mathcal E_e)$ numerically equivalent to $3 C_e + b_ef$ and $c_2(\mathcal E_e) =k_e$, where $C_e$ and $f$ generators of ${\rm Num}(\mathbb{F}_e)$ and where $b_e$ and $k_e$ are integers satisfying some natural numerical conditions (cf. Assumptions \ref{ass:AB} below). In this set-up, one gets $3$-fold scrolls $X_e \subset \mathbb P^{n_e}$, with $n_e := 4 b_e - k_e - 6 e + 4$, which are non--degenerate, non-special, of degree $d_e:= \deg(X_e) = 6b_e - 9e - k_e$ and sectional genus $g_e:= 2b_e-3e-2$, whose hyperplane section line bundle  is denoted by $\xi_e:= \mathcal O_{X_e}(1)$ and we studied the behaviour of such $3$-fold scrolls $(X_e, \xi_e)$ in terms of some Ulrich bundles they can support. 

A reason for such interest came from the fact that the existence of Ulrich bundles on geometrically ruled surfaces has been considered in  \cite{a-c-mr,ant,cas} while  in  \cite{f-lc-pl} the existence of Ulrich bundles of rank $1$ and $2$ on  low-degree smooth $3$--fold scrolls over a surface was investigated and, among such $3$--folds, there are scrolls over $\FF_e$ with $e=0, 1$. 

In the above set-up, among other things, in \cite{fa-fl2} we proved the following:

\bigskip

\noindent
{\bf Theorem A} {\rm(\!\!\cite[{\bf Main Theorem} and {\bf Main Corollary}]{fa-fl2})} 
{\em For any integer $e \geqslant 0$, consider the Hirzebruch surface 
$\mathbb{F}_e$ and let $\Oc_{\FF_e}(\alpha,\beta)$ denote the line bundle 
$\alpha C_e + \beta f$ on $\mathbb{F}_e$, where $C_e$ and $f$ are  generators  of ${\rm Num}(\mathbb{F}_e)$. Let $(X_e, \xi_e)\cong (\Pp(\E_e), \Oc_{\Pp(\E)}(1))$ be a $3$-fold scroll over $\mathbb{F}_e$, where $\mathcal E_e$  is   as in Assumptions \ref{ass:AB} and where $\varphi: X_e \to \FF_e$ denotes the scroll map. Then: 

\medskip

\noindent 
(a) $X_e$  does  not support any Ulrich line bundle  w.r.t. the tautological polarization $\xi_e$ unless $e = 0$. In this latter case, the unique Ulrich line bundles on $X_0$ are the following: 
\begin{itemize}
\item[(i)]  $L_1 :=\xi_0+\varphi^*\Oc_{\FF_0}(2,-1) $ and $ L_2 :=\xi_0+\varphi^*\Oc_{\FF_0}(-1,b_0-1)$;  
\item[(ii)]  for any integer $t\geqslant 1$, $M_1 :=2\xi_0+\varphi^*\Oc_{\FF_0}(-1,-t-1)$ and $M_2:=\varphi^*\Oc_{\FF_0}(2,3t-1)$, which only occur for $b_0=2t, k_0=3t$.
\end{itemize}

\medskip 

\noindent
(b) Set $e=0$ and let $r \geqslant 2$ be any integer. Then the moduli space of rank-$r$ vector bundles $\cU_r$ on $X_0$ which are Ulrich w.r.t. $\xi_0$ and with first Chern class
\begin{eqnarray*}c_1(\cU_r) =
    \begin{cases}
      r \xi_0 + \varphi^*\Oc_{\FF_0}(3, b_0-3) + \varphi^*\Oc_{\FF_0}\left(\frac{r-3}{2}, \frac{(r-3)}{2}(b_0-2)\right),  & \mbox{if $r$ is odd}, \\
      r \xi_0 + \varphi^*\Oc_{\FF_0}(\frac{r}{2},\frac{r}{2}(b_0-2)), & \mbox{if $r$ is even}.
    \end{cases}\end{eqnarray*}
    is not empty and it contains a generically smooth component $\mathcal M(r)$ of dimension 
		\begin{eqnarray*}\dim (\mathcal M(r) ) = \begin{cases} \frac{(r^2 -1)}{4}(6 b_0 -4), & \mbox{if $r$ is odd}, \\
			 \frac{r^2}{4} (6b_0-4) +1 , & \mbox{if $r$ is even}.
    \end{cases}
    \end{eqnarray*}
    The general point $[\cU_r] \in \mathcal M(r)$  
corresponds to a  slope-stable vector bundle, of slope w.r.t. $\xi_0$ given by 
$\mu(\cU_r) = 8b_0 - k_0 -3$. If moreover $r=2$, then $\cU_2$ is also {\em special} (cf. Def. \ref{def:special} above).

\medskip

\noindent
(c) When $e >0$, let $r \geqslant 2$ be any integer. Then the moduli space of rank-$r$ vector bundles $\cU_{r}$ on $X_e$ which are Ulrich w.r.t. $\xi_e$ and with first Chern class
\begin{eqnarray*}c_1(\cU_r) =
    \begin{cases}
      r \xi_e + \varphi^*\Oc_{\FF_e}(3,b_e -3) + \varphi^*\Oc_{\FF_e}\left(\frac{r-3}{2}, \frac{(r-3)}{2}(b_e - e -2)\right), & \mbox{if $r$ is odd}, \\
      r \xi_e + \varphi^*\Oc_{\FF_e}\left(\frac{r}{2}, \frac{r}{2}(b_e-e-2)\right), & \mbox{if $r$ is even}.
    \end{cases}\end{eqnarray*}
    is not empty and it contains a generically smooth component $\mathcal M(r)$ of dimension 
		\begin{eqnarray*}\dim (\mathcal M(r) ) = \begin{cases} \left(\frac{(r -3)^2}{4}+ 2 \right)(6 b_e - 9e -4) + \frac{9}{2}(r-3) (2b_e-3e), & \mbox{if $r$ is odd}, \\
			 \frac{r^2}{4} (6b_e- 9e-4) +1 , & \mbox{if $r$ is even}.
    \end{cases}
    \end{eqnarray*} The general point $[\cU_{r}] \in \mathcal M(r)$  
corresponds to a  slope-stable vector bundle, of slope w.r.t. $\xi_e$ given by 
$\mu(\cU_r) = 8 b_e - k_e - 12 e - 3$.  If moreover $r=2$, then $\cU_2$ is also special.

\medskip

\noindent
(d) In particular, 
\begin{itemize}
\item[(i)]  when $e=0$, the {\em Ulrich complexity} of $X_0$ w.r.t. $\xi_0$ is 
$uc_{\xi_0}(X_0) = 1$; moreover $X_0$ supports slope-stable vector bundles of any rank $r \geqslant 1$ which are Ulrich w.r.t. $\xi_0$, i.e. there are no slope-stable-Ulrich-rank gaps on $X_0$ w.r.t. the chosen Chern class;  
\item[(ii)]  when otherwise $e>0$, the {\em Ulrich complexity} of $X_e$ w.r.t. $\xi_e$ is $uc_{\xi_e}(X_e) = 2$; nonetheless $X_e$ supports slope-stable vector bundles of any rank $r \geqslant 2$ which are Ulrich w.r.t. $\xi_e$, i.e. the only slope-stable-Ulrich-rank gap w.r.t. the chosen Chern class is $r=1$.  
\end{itemize}
}

\bigskip

Part (a) of {\bf Theorem A} above highlights in particular that, when $e=0$, $3$-fold scrolls $X_0$ support Ulrich line bundles, namely $M_1$ and $M_2$, which have a certain  {\em sporadic behavior} as they actually exist only for pairs $(b_0, k_0)= (2t, 3t)$, for any integer $t \geqslant 1$ where, we recall that the integer pair $(b_0, k_0)$ comes from 
$c_1(\mathcal E_0) = 3 C_0 + b_0 f$, $c_2(\mathcal E_0) = k_0$ (cf. also \eqref{(iii)}, \eqref{eq:rem:assAB} below) whereas line bundles $L_1$ and $L_2$ actually exist for all pairs $(b_0,k_0)$ associated to $\cE_0$ very-ample on $\mathbb F_0$. For this reason, $M_1$ and $M_2$ will be called {\em sporadic} Ulrich line bundles on $X_0$ whereas $L_1$ and $L_2$ {\em non-sporadic} Ulrich line bundles. 

We want to stress that parts (b) and (d)-(i) of {\bf Theorem A} above have been proved in \cite{fa-fl2} via {\em iterative constructions} of Ulrich vector bundles $\mathcal U_r$, of any rank $r \geqslant 2$, with the use of deformations of non-trivial extensions involving only {\em non-sporadic} Ulrich line bundles $L_1$ and $L_2$ as in part (a)-(i) of {\bf Theorem A}.

Our goal in the present paper is to focus on moduli spaces of Ulrich vector bundles on $(X_0, \xi_0)$ which arise from {\em sporadic/mixed cases}; precisely, we are interested in: 

\medskip

\noindent
$\bullet$ understanding what type of moduli spaces of rank-$r\geqslant 2$ 
vector bundles on $X_0$, which are Ulrich w.r.t. $\xi_0$, arise from iterative constructions by means of either {\em sporadic pairs} $(M_1, M_2)$ or even {\em mixed pairs} $(L_i, M_j)$, $1 \leqslant i,j \leqslant 2$, as in  {\bf Theorem A}-(a), 

\medskip 

\noindent
$\bullet$ computing what kind of Chern classes are determined by these types of constructions,

\medskip 

\noindent
$\bullet$ proving, for any $r \geqslant 2$, the existence of an irreducible component $\mathcal M(r)$ of any such a moduli space and deducing whether such a component can be (generically) smooth,

\medskip

\noindent
$\bullet$ computing $\dim(\mathcal M(r))$,

\medskip

\noindent
$\bullet$  establishing  slope-stability for the bundle $\mathcal U_r$ corresponding to a general point $[\mathcal U_r]$ of any such a component $\mathcal M(r)$, and

\medskip 

\noindent
$\bullet$ understanding whether there exists some slope-stable-Ulrich rank gap w.r.t. the chosen Chern classes.

\bigskip 

Throughout this work we will be therefore concerned with the case $e=0$, with $3$-fold scrolls arising from bundles $\mathcal E_0$ as in Assumptions \ref{ass:AB} over $\FF_0 = \Pp(\Oc^{\oplus 2}_{\Pp^1})$. Using only {\em sporadic pairs}, we prove the following:

\medskip

\noindent
{\bf Theorem B-sporadic cases} (cf. Theorem \ref{thm:general0}) {\em Let  $(X, \xi) \cong \scrollcal{E}$ be a $3$-fold  scroll over $\FF_0$, with $\mathcal E = \mathcal E_0$ satisfying Assumptions \ref{ass:AB}.  Let $\varphi: X \to \FF_0$ be the scroll map and $F$ be the $\varphi$-fiber. Let $r \geqslant 1$ be any integer. 

Then the moduli space of rank-$r$ vector bundles $\cU_r$ on $X$ which are Ulrich w.r.t. $\xi$ and with Chern classes 

\begin{equation*}
    c_1(\cU_r): =
    \begin{cases} 
      (r +1)\xi + \varphi^*\Oc_{\FF_0}(0, -2t) + \varphi^*\Oc_{\FF_0}\left(\frac{(r-3)}{2},r(t-1) \right), & \mbox{if $r$ is odd}, \\
      r \xi + \varphi^*\Oc_{\FF_0}\left(\frac{r}{2}, r(t-1)\right), & \mbox{if $r$ is even}, 
    \end{cases}
  \end{equation*}
     \begin{eqnarray*}
c_2(\cU_r) =
    \begin{cases} 
       \xi \cdot  \varphi^*\Oc_{\FF_0}\left(2r^2-2, (2t-1)r^2-2t+1 \right) -\frac{(r-1)(2rt+r+14t-3)}{2} F, & \mbox{if $r\geqslant 3$ is odd}, \\
       \xi \cdot \varphi^*\Oc_{\FF_0}\left(2r^2-2r, r(2rt-r-t+1) \right)-\frac{r(2rt+r+t-1)}{2} F, & \mbox{if $r$ is even}, 
    \end{cases}
      \end{eqnarray*}
     \begin{eqnarray*}
c_3(\cU_r) =
    \begin{cases} 
       4r^3t-2r^3-8r^2t+4r^2-4rt+2r+8t-4,   & \mbox{if $r\geqslant 3$ is odd}, \\
       4r^3t-2r^3-10r^2t+6r^2+4rt-4r, & \mbox{if $r\geqslant 4$ is even}, 
    \end{cases}
    \end{eqnarray*}is not empty and it contains a generically smooth component $\mathcal M(r)$ of dimension 
		\begin{eqnarray*}
		\dim (\mathcal M(r) ) = \begin{cases} \frac{(r^2 -1)}{4}(8t -4), & \mbox{if $r$ is odd}, \\
			 \frac{r^2}{4} (8t-4) +1 , & \mbox{if $r$ is even},
    \end{cases}
    \end{eqnarray*} with $t \geqslant 1$. The general point $[\cU_r] \in \mathcal M(r)$  
corresponds to a  slope-stable vector bundle, of slope w.r.t. $\xi$ given by 
$\mu(\cU_r) = 13t -3$. 

In particular, there are no slope-stable-Ulrich-rank gaps on $X$ w.r.t. the chosen Chern classes.}

\bigskip

When otherwise {\em mixed pairs} are considered on the one hand we show that, in some cases, there are $0$-dimensional modular components consisting only of one point which corresponds to a ($S$-equivalence class of a) polystable bundle (cf. Theorem \ref{prop:rk 2 simple Ulrich vctB e=0;II}-(1) and (4)); on the other,  using similar strategy as that used to prove {\bf Theorem B-sporadic cases}, one can get existence of several {\em extra} positive-dimensional, sporadic modular components which are different from  those in {\bf Theorem B-sporadic cases}. 

Due to the high number of possible pairings at any rank-$r$ step, we will limit here to state detailed results for some significant examples of {\em extra}, positive-dimensional sporadic modular components of Ulrich bundles on $X$, arising via the use of some specific {\em mixed pairs}. In particular we have the following:

\bigskip

\noindent
{\bf Theorem C-mixed cases} {\em Let  $(X, \xi) \cong \scrollcal{E}$ be a $3$-fold  scroll over $\FF_0$, with $\mathcal E = \mathcal E_0$ satisfying Assumptions \ref{ass:AB}.  Let $\varphi: X \to \FF_0$ be the scroll map and $F$ be the $\varphi$-fiber. Let $r \geqslant 1$ be any integer. Then $(X,\xi)$ supports several {\em extra} positive-dimensional, sporadic modular components parametrizing rank-$r$ vector bundles $\cU_r$ on $X$ which are Ulrich w.r.t. $\xi$, with given Chern classes. 

In particular, for Chern class 
$$c_1: =
    \begin{cases} 
      r \xi +\varphi^*\Oc_{\FF_0}\left(\frac{(r-3)}{2},(r-1)(t-1) +1 \right), & \mbox{if $r$ is odd}, \\
      r \xi + \varphi^*\Oc_{\FF_0}\left(\frac{r}{2}, (r-2)(t-1)\right), & \mbox{if $r$ is even},
    \end{cases}$$the moduli space of rank-$r$ vector bundles $\cU_r$ on $X$ which are Ulrich w.r.t. $\xi$ and with first Chern class as above is not empty 
    and it contains a generically smooth component $\mathcal M(r)$ of dimension 
		\begin{eqnarray*}
		\dim (\mathcal M(r) ) = \begin{cases} \frac{r^2}{4}(10t -5) + r - 2t, & \mbox{if $r$ is even}, \\
			 t(r^2 + 7r-12) - \frac{1}{2} (4r^2 - 11r +13), & \mbox{if $r$ is odd},
    \end{cases}
    \end{eqnarray*} with $t \geqslant 1$.

The general point $[\cU_r] \in \mathcal M(r)$  corresponds to a  slope-stable vector bundle, of slope w.r.t. $\xi$ given by $\mu(\cU_r) = 13t -3$. 
In particular, there are no slope-stable-Ulrich-rank gaps on $X$ w.r.t. the chosen Chern classes.}

\medskip

For a proof of the previous result, the reader is referred to Theorem \ref{prop:rk 2 simple Ulrich vctB e=0;II} and to \S\,\ref{Ulrichhighermixed}.

\medskip

The paper consists of four sections. In Section 1 we recall some generalities on 
Ulrich bundles on projective varieties as well as some other preliminaries necessary to properly define $3$-fold scrolls $(X, \xi)$ which are the core of the paper. Section \ref{Ulrich rk 2 vb} focuses   on moduli spaces of  rank-$2$ Ulrich vector bundles obtained  via both {\em sporadic} (cf. Subsect.\;\ref{S:sporadic}) and {\em mixed extensions} (cf. Subsect.\;\ref{S:mixed}). Section \;\ref{Ulrich higher rk  vb} deals with the general case of higher rank $r \geqslant 3$, obtained via 
inductive processes, extensions, deformations and modular theory dealing with {\em sporadic pairs}. Finally in Section \ref{Ulrichhighermixed} we briefly discuss some 
results which can be obtained, following same strategies as in Section \ref{Ulrich higher rk  vb}, using {\em mixed pairs}.

\begin{akn*} We would like to thank the  organizers and all participants of the two conferences, {\em ``Homemade Algebraic Geometry - Celebrating Enrique Arrondo's 60th birthday"}, held at Alcal\'a de Henares, July 10-13, 2023 and {\em ``Bandoleros 2023"}, held at Politecnico di Torino, October 23-25, 2023, not only for the perfect organizations  and for friendly and cooperative atmospheres but also because this article came about as a result of questions posed by several participants during the aforementioned conferences, regarding results on moduli spaces studied in our former paper \cite{fa-fl2}. Furthermore, we deeply thank the organizers of the conference {\em ``Bandoleros 2023"} also for the opportunity to have been in contact for the last time with our colleague and friend Gianfranco Casnati. 
\end{akn*}
%
%

\subsection*{Notation and terminology} We work throughout over the field $\mathbb{C}$ of complex numbers. All schemes will be endowed with the Zariski topology. By \emph{variety}   we mean an integral algebraic scheme. We say that a property holds for a \emph{general}  point of a variety $V$ if it holds for any point in a Zariski open non--empty subset of $V$. We will  interchangeably use the terms {\em rank-$r$ vector bundle} on a variety $V$ and {\em rank-$r$ locally free sheaf} on $V$; in particular for 
the case $r=1$,   that   is line bundles (equiv. invertible sheaves), to ease notation and if no confusion arises, we sometimes identify line bundles with Cartier divisors interchangeably using additive notation instead of multiplicative notation and tensor products. Thus, if $L$ and $M$ are line bundles on $V$, the {\em dual} of $L$ will be 
denoted by either $L^{\vee}$, or $L^{-1}$ or even $-L$, $L^{\otimes n}$ will be sometimes denoted with $nL$ as well as $L \otimes M$ with $L+M$.
 
If $\mathcal P$ is either a {\em parameter space} of a flat family of geometric objects $\mathcal E$ defined on $V$ (e.g. vector bundles, extensions, etc.) 
or a {\em moduli space} parametrizing geometric objects modulo a given equivalence relation, we will denote by $[\mathcal E]$ the parameter point (resp., the moduli point) corresponding to the geometric object $\mathcal E$ (resp., associated to the equivalence class of $\mathcal E$). For further non-reminded terminology, we refer the reader to \cite{H}.

\vspace{3mm}
In the sequel, we will  focus on smooth, irreducible, projective $3$-folds and the following notation will be used throughout this work.

\label{notation}
  \begin{enumerate}
\item [$\bullet$ ] $X$ is a smooth, irreducible, projective variety of dimension $3$ (or simply a {\em $3$-fold});
\item [$\bullet$ ]$\chi(\mathcal F) =  \sum_{i=0}^3 (-1)^i  h^i(X, \mathcal F)$ denotes the Euler characteristic of $\mathcal F$, where $\mathcal F$ is a vector bundle of rank $r \geqslant 1$ on $X$;
\item [$\bullet$ ] $\omega_X$ denotes the canonical bundle of $X$ as well as $K_X$ denotes a canonical divisor; 
\item [$\bullet$ ] $c_i = c_i(X)$  denotes the $i^{th}$-Chern class of $X$, $0 \leqslant i \leqslant 3$;
\item [$\bullet$ ] $d = \deg{X} = L^3$ denotes the degree of $X$ in its embedding given by a very-ample line bundle $L$ on $X$;
\item [$\bullet$ ] $g = g(X),$ denotes the sectional genus of $\xel$ defined by $2g-2:=(K_X+2 L)L^2;$ 
\item [$\bullet$ ] if $S$ is a smooth surface, $\equiv$ will denote the numerical equivalence of divisors on $S$ whereas $\sim$ their linear equivalence. 
\end{enumerate}

%
%

\section{Preliminaries} For the reader convenience we recall some general facts that we will use in the sequel. 

\begin{dfntn}\label{def:Ulrich} Let $X\subset \Pp^N$ be a smooth, irreducible, projective variety of dimension $n$  and let $H$  be a hyperplane section of $X$.
A vector bundle $\cU$  on $X$ is said to be  {\em Ulrich} with respect to ${\mathcal O}_X(H)$ if
\begin{eqnarray*}
H^{i}(X, \cU(-jH))=0 \quad \text{for all}  \quad  0 \leqslant i \leqslant n \text{\, and} \quad 1 \leqslant j \leqslant n.
\end{eqnarray*}
 \end{dfntn}

  \begin{rem*}\label{Rmk1}  (i) If $X$ supports Ulrich bundles w.r.t. ${\mathcal O}_X(H)$ then one sets $uc_H(X)$, called the {\em Ulrich complexity of $X$ w.r.t. ${\mathcal O}_X(H)$}, to be the minimum rank among possible Ulrich vector bundles w.r.t. ${\mathcal O}_X(H)$ on $X$. 

\noindent
(ii) If $\cU$ is a vector bundle on $X$, which is Ulrich w.r.t. ${\mathcal O}_X(H)$,  then 
$\cU':=\cU^{\vee}(K_X +(n+1)H)$ is a vector bundle of the same rank of $\cU$, which is also Ulrich w.r.t. ${\mathcal O}_X(H)$. The vector bundle $\cU'$ is called the {\em Ulrich dual} of $\cU$. From this we see that,  if Ulrich  bundles of some given rank $r \geqslant 1$ on $X$ do exist, then they come in pairs. 
 \end{rem*}
 
\begin{dfntn}\label{def:special} Let $X\subset \Pp^N$ be a smooth, irreducible, projective variety of dimension $n$ and let $H$ denote a hyperplane section of $X$.  Let $\cU$  be  a rank-$2$ vector bundle  on  $X$ which is  {\em Ulrich} with respect to    $\mathcal O_X(H)$. Then $\cU$ is said to be {\em special} if $c_1(\cU) = K_{X} +(n + 1)H.$

 \end{dfntn}

\noindent
Notice that,  because $\cU$ in Definition \ref{def:special} is of rank $2$, then $\cU^{\vee} \cong \cU (- c_1(\cU))$; therefore  for a rank-$2$ Ulrich bundle  $\cU$   being {\em special}  is equivalent to  $\cU$  being isomorphic to its Ulrich dual bundle. 

\bigskip

We now briefly remind well-known facts  concerning (semi)stability and slope-(semi)stability properties of Ulrich bundles (cf. \cite[Def.\;2.7]{c-h-g-s}). Let $\mathcal V$ be a vector bundle on $X$; recall that $\mathcal V$ is said to be {\em semistable} if for every non-zero coherent subsheaf
$\mathcal K \subset \mathcal V$, with $0 < {\rm rk}(\mathcal K) := \mbox{rank of} \; \mathcal K < {\rm rk}(\mathcal V)$, the inequality
$\frac{P_{\mathcal K}}{{\rm rk}(\mathcal K)} \leqslant  \frac{P_{\mathcal V}}{{\rm rk}(\mathcal V)}$ holds true, where 
$P_{\mathcal K}$ and $P_{\mathcal V}$ are their Hilbert polynomials. Furthermore, $\mathcal V$ is said to be {\em stable} if 
the strict inequality above holds. 

Recall that the {\em slope} of a vector  bundle $\mathcal V$, w.r.t. the very ample polarization $\mathcal O_X(H)$, is defined to be $\mu(\mathcal V) := \frac{c_1(\mathcal V) \cdot H^{n-1}}{{\rm rk}(\mathcal V)}$; then $\mathcal V$ is said 
to be {\em $\mu$-semistable}, or even {\em slope-semistable} (w.r.t. $\mathcal O_X(H)$), if for every non-zero coherent subsheaf
$\mathcal K \subset \mathcal V$ with $0 < {\rm rk}(\mathcal K)   < {\rm rk}(\mathcal V)$, one has 
$\mu (\mathcal K) \leqslant \mu(\mathcal V)$, whereas $\mathcal V$ is said to be {\em $\mu$-stable}, or  {\em slope-stable}, if the strict inequality holds. 

The two definitions of (semi)stability are in general related as follows (cf. e.g. \cite[\S\;2]{c-h-g-s}): 
\begin{eqnarray*}\mbox{slope-stability} \Rightarrow \mbox{stability} \Rightarrow \mbox{semistability} \Rightarrow \mbox{slope-semistability}.\end{eqnarray*}

\bigskip

When the bundle in question is in particular Ulrich w.r.t. 
$\mathcal O_X(H)$, and in this case we denote it by $\mathcal U$ to remind Ulrichness, then one has   
 \begin{eqnarray} \label{slope}
\mu(\mathcal U)= \deg(X) + g-1,
\end{eqnarray} where  $\deg (X)$ is the degree of $X$ in the embedding given by $\mathcal O_X(H)$ and where $g$ is the sectional genus of $(X,\mathcal O_X(H))$ (see e.g. \cite[Prop.\,3.2.5]{CMP}), and the following more precise situation holds:

\begin{theo}\label{thm:stab} (cf. \cite[Thm.\;2.9]{c-h-g-s}) Let $X\subset \Pp^N$ be a smooth, irreducible, projective variety of dimension $n$ and let $H$ be a hyperplane section of $X$. Let $\cU$ be a rank-$r$ vector bundle on $X$ which is Ulrich w.r.t. $\mathcal O_X(H)$. Then: 

\noindent
(a) $\mathcal U$ is semistable, so also slope-semistable;

\noindent
(b) If $0 \to  \mathcal F \to \mathcal  U  \to \mathcal G \to 0$ is an exact sequence of coherent sheaves with $\mathcal G$ 
torsion-free, and $\mu(\mathcal F) = \mu(\mathcal U)$, then $\mathcal F$ and $\mathcal G$ are both vector 
bundles which are Ulrich w.r.t. $\mathcal O_X(H)$. 

\noindent
(c) If $\mathcal U$ is stable then it is also slope-stable. In particular, the notions of stability and slope-stability coincide 
for Ulrich bundles. 
\end{theo}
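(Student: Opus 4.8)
The plan is to reduce every statement on $X$ to the projective space $\Pp^n$ by means of a general finite linear projection $\pi\colon X\to\Pp^n$ attached to $\cO_X(H)$, exploiting the equivalent characterisation of Ulrichness (cf. \cite[Thm.\,2.3]{b}): $\cU$ is Ulrich w.r.t. $\cO_X(H)$ if and only if $\pi_*\cU\cong\cO_{\Pp^n}^{\oplus rd}$ is the trivial bundle, where $d=\deg(X)$ and $r=\rk{\cU}$. Since $\pi$ is finite, $\pi_*$ is exact; and since $\cO_X(H)=\pi^*\cO_{\Pp^n}(1)$, the projection formula gives $\pi_*(\cF(mH))=(\pi_*\cF)(m)$ for every coherent $\cF$, so Hilbert polynomials are preserved, $P^X_{\cF}=P^{\Pp^n}_{\pi_*\cF}$, while ranks rescale uniformly, $\rk{\pi_*\cF}=d\,\rk{\cF}$. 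Consequently the reduced Hilbert polynomial of \emph{every} Ulrich bundle equals $d\binom{m+n}{n}$, independently of its rank; this uniformity is the engine driving all three parts.

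For part (a), I would take any subsheaf $\cK\subset\cU$ with $0<\rk{\cK}<r$ and push it forward, realising $\pi_*\cK$ as a subsheaf of the trivial bundle $E:=\cO_{\Pp^n}^{\oplus rd}$. As $E$ is a direct sum of slope-$0$ (stable) line bundles it is Gieseker-semistable, whence $p_{\pi_*\cK}\leqslant p_{E}=\binom{m+n}{n}$. Transporting this back to $X$ through $P^X_{\cK}=P^{\Pp^n}_{\pi_*\cK}$ and the rank rescaling yields $\tfrac{P_{\cK}}{\rk{\cK}}\leqslant d\binom{m+n}{n}=\tfrac{P_{\cU}}{r}$, which is exactly Gieseker-semistability of $\cU$; slope-semistability then follows from the chain of implications recalled before the statement.

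For part (b), I would push the given sequence forward to $0\to\pi_*\cF\to E\to\pi_*\cG\to 0$ with $\pi_*\cG$ torsion-free. Slopes are governed by the subleading coefficient of the reduced Hilbert polynomial (an affine function of it, with the same normalisation for all sheaves on a fixed variety), and $\pi_*$ rescales reduced Hilbert polynomials by the constant factor $1/d$; hence $\mu(\cF)=\mu(\cU)$ (equivalently $\mu(\cG)=\mu(\cU)$, by additivity) translates into $\mu_{\Pp^n}(\pi_*\cG)=\mu_{\Pp^n}(E)=0$, i.e. $c_1(\pi_*\cG)=0$. Now $\pi_*\cG$ is a torsion-free quotient of the trivial bundle $E$, hence globally generated; a globally generated torsion-free sheaf with vanishing $c_1$ is free (taking a number of general sections equal to its rank gives a generically, hence everywhere, injective map from a trivial bundle whose cokernel is torsion with trivial $c_1$, thus zero). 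Therefore $\pi_*\cG\cong\cO_{\Pp^n}^{\oplus s}$, and $\pi_*\cF=\ker(E\to\pi_*\cG)$ is the kernel of a morphism of trivial bundles on $\Pp^n$, i.e. of a constant matrix, so it is trivial as well. Both pushforwards being trivial, $\cF$ and $\cG$ are Ulrich by the characterisation above; being Ulrich they are maximal Cohen--Macaulay, and since $X$ is smooth the Auslander--Buchsbaum formula forces them to be locally free.

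Part (c) then follows formally. If $\cU$ were Gieseker-stable yet not slope-stable, there would be a saturated $\cF\subsetneq\cU$ with $0<\rk{\cF}<r$ and $\mu(\cF)\geqslant\mu(\cU)$; slope-semistability from (a) forces $\mu(\cF)=\mu(\cU)$, and $\cG=\cU/\cF$ is torsion-free, so part (b) makes $\cF$ an Ulrich bundle. Its reduced Hilbert polynomial is then $d\binom{m+n}{n}=p_{\cU}$ by the rank-independence noted at the outset, contradicting the strict inequality $p_{\cF}<p_{\cU}$ required by Gieseker-stability of $\cU$ for a proper saturated subsheaf; hence $\cU$ is slope-stable, and with the converse implication recalled above the two notions coincide for Ulrich bundles. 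The decisive point — and the step I expect to be most delicate — is part (b), where bare equality of slopes must be upgraded to triviality of \emph{both} pushforwards; once the two auxiliary facts on $\Pp^n$ (globally generated $+\,c_1=0\,\Rightarrow$ free, and morphisms of trivial bundles are constant) are in place, the stability statement (c) is immediate.
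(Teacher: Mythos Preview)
The paper does not supply its own proof of this theorem; it simply quotes \cite[Thm.\;2.9]{c-h-g-s}. Your strategy---reducing everything to $\Pp^n$ via a finite linear projection $\pi$ and the characterisation $\pi_*\cU\cong\cO_{\Pp^n}^{\oplus rd}$---is exactly the one in that reference, and your arguments for (a) and (c) are correct.

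There is, however, a genuine gap in your treatment of (b). You assert that if the cokernel $Q$ of $\varphi\colon\cO_{\Pp^n}^{\oplus s}\to\pi_*\cG$ is torsion with $c_1(Q)=0$, then $Q=0$. This inference is false for $n\geqslant 2$: any sheaf supported in codimension $\geqslant 2$---for instance a skyscraper on $\Pp^2$---is torsion with vanishing $c_1$ yet nonzero. Concretely, if $\pi_*\cG$ fails to be locally free at a point $x$, its stalk requires more than $s$ generators there, so \emph{no} choice of $s$ global sections can be surjective at $x$; your argument only shows that $Q$ is supported on the non-locally-free locus of $\pi_*\cG$, which has codimension $\geqslant 2$ but need not be empty a priori. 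One way to close the gap is to work instead with the kernel: $\pi_*\cF=\ker(\cO_{\Pp^n}^{\oplus rd}\to\pi_*\cG)$ is automatically reflexive (as a saturated subsheaf of a locally free sheaf) with $c_1=0$, and one can then argue, e.g. by restriction to a general line and induction on $n$, or by exploiting $0$-regularity together with the Hilbert polynomial, that it is trivial; triviality of $\pi_*\cG$ then follows from your constant-matrix observation. Either route requires noticeably more than the one-line parenthetical you give---which is consistent with your own closing remark that this is ``the step I expect to be most delicate''.
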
 
\vspace{3mm}

 We like to point out that the property of being Ulrich in an irreducible, flat family of vector bundles is an open condition; indeed if the bundle $\cU$ is a deformation  of an Ulrich vector bundle  $\widetilde{\cU}$  then  $\cU$ is  also Ulrich as the cohomology vanishings  of  $\widetilde{\cU}(-j)$, for $1 \leqslant j \leqslant n$, imply (by semi--continuity in the irreducible flat family) the cohomology vanishings of $\cU(-j)$. 
\vspace{3mm}

We also like to remark  that because Ulrich bundles are semistable, then any family of Ulrich bundles with given rank and Chern classes is bounded (see for instance \cite{langer}). In this situation, if bundles in a bounded family are {\em simple}, 
i.e. $\rm End(\mathcal U) \cong \mathbb{C}$, one has:

\begin{prop} {\rm(}see \cite[Prop.\,2.10]{c-h-g-s}\label{casanellas-hartshorne}{\rm )} On a non-singular projective variety $X$, any bounded family of simple bundles $\mathcal E$ with given rank and Chern classes, satisfying $H^2(\mathcal E \otimes \mathcal E^{\vee}) = 0$ admits a {\em smooth modular family}.
\end{prop}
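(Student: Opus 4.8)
The plan is to reduce the statement to the local deformation theory of each bundle in the family and then to globalize using boundedness. Fix a simple vector bundle $\mathcal{E}$ on $X$ belonging to the given class, so that $\mathrm{End}(\mathcal{E}) = H^0(X,\mathcal{E}\otimes\mathcal{E}^{\vee}) \cong \mathbb{C}$. Since $\mathcal{E}$ is locally free, one has $\mathrm{Ext}^i_X(\mathcal{E},\mathcal{E}) \cong H^i(X,\mathcal{E}\otimes\mathcal{E}^{\vee})$ for all $i$, and the infinitesimal deformations of $\mathcal{E}$ are classified by $\mathrm{Ext}^1_X(\mathcal{E},\mathcal{E})$ while the obstructions to extending deformations across square-zero extensions lie in $\mathrm{Ext}^2_X(\mathcal{E},\mathcal{E})$. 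First I would recall, following Mukai and Altman--Kleiman, that because $\mathcal{E}$ is simple and the given family is bounded, the functor of simple sheaves with the prescribed rank and Chern classes is representable by an algebraic space of finite type over $\mathbb{C}$; equivalently, $\mathcal{E}$ sits in an honest (not merely formal) deformation family whose Kodaira--Spencer map at $[\mathcal{E}]$ is an isomorphism onto $\mathrm{Ext}^1_X(\mathcal{E},\mathcal{E})$. Simpleness is exactly what forces the automorphism group to be $\mathbb{C}^{*}$ and hence makes the relevant parameter object a scheme/algebraic space rather than a stack with extra automorphisms.

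Next I would bring in the hypothesis $H^2(X,\mathcal{E}\otimes\mathcal{E}^{\vee}) = 0$. Then the obstruction space $\mathrm{Ext}^2_X(\mathcal{E},\mathcal{E})$ vanishes, so the deformation functor of $\mathcal{E}$ is formally smooth: its hull is a formal power series ring in $h^1(X,\mathcal{E}\otimes\mathcal{E}^{\vee})$ variables. By Artin approximation this algebraizes, so the representing algebraic space described above is smooth at $[\mathcal{E}]$, of dimension $h^1(X,\mathcal{E}\otimes\mathcal{E}^{\vee})$, with Kodaira--Spencer map an isomorphism at every point of a neighbourhood of $[\mathcal{E}]$. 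This gives a smooth local modular family through $\mathcal{E}$.

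Finally I would globalize. The conditions ``$\mathcal{E}$ simple'' and ``$H^2(X,\mathcal{E}\otimes\mathcal{E}^{\vee})=0$'' are open in any flat family, by openness of simpleness and upper semicontinuity of $h^2$, and by hypothesis the collection of all such bundles with the fixed rank and Chern classes is bounded; hence finitely many of the smooth local families produced above already cover the whole class. Their union is then a flat family over a smooth (possibly reducible, possibly of varying dimension) base, containing every member of the class and with Kodaira--Spencer map an isomorphism at each point, i.e. a smooth modular family, as asserted. The one genuinely delicate ingredient is the passage from the formal, unobstructed deformation space to an actual algebraic modular family of finite type: this is precisely where boundedness is needed, to produce a finite-type parameter space, and where one must invoke the representability of the functor of simple sheaves; by contrast the cohomological vanishing argument yielding smoothness is entirely routine.
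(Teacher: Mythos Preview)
The paper does not give its own proof of this proposition: it is quoted verbatim from \cite[Prop.\,2.10]{c-h-g-s} and used as a black box, so there is no argument in the paper to compare against. Your sketch is a faithful reconstruction of the standard proof (and of the one in the cited reference): identify $\mathrm{Ext}^i(\mathcal{E},\mathcal{E})$ with $H^i(\mathcal{E}\otimes\mathcal{E}^{\vee})$, use simpleness to get a fine moduli space (algebraic space) of simple sheaves \`a la Altman--Kleiman/Mukai, use vanishing of $\mathrm{Ext}^2$ to conclude smoothness at each point, and use boundedness to reduce to a finite-type base. The only caveat is terminological: a ``smooth modular family'' in the sense of \cite{c-h-g-s} is not literally the moduli algebraic space of simple sheaves but rather a smooth scheme $T$ with a universal sheaf that \'etale-locally dominates every flat family of such bundles; this is extracted from the algebraic space by taking an \'etale atlas, so your argument produces exactly the right object once you phrase the conclusion that way.
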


The existence of smooth modular families of simple vector bundles, along with the fact that the property of being Ulrich in an irreducible flat family of vector bundles is an open condition, will help us in performing {\em recursive constructions} of Ulrich bundles in any possible rank $r \geqslant 1$ on the projective varieties we are dealing with, proving also slope-stability for the general bundle parametrized by such an irreducible family.

Let us now introduce projective varieties we are interested in, which will be the support of Ulrich bundles we are going to contruct and study.

\begin{dfntn}\label{specialvar} Let $X$ be a smooth, irreducible, projective variety of dimension $3$, or simply a {\em $3$-fold}, and let $L$ be an ample line bundle on $X$.  The pair $(X,L)$ is said to be a {\em scroll} over a normal variety $Y$ if there exist an ample line bundle $M$ on $Y$ and a surjective morphism  $\varphi: X \to Y$ with
connected fibers such that $K_X + (4 - \dim(Y)) L = \varphi^*(M).$
\end{dfntn}

If, in particular, in the above definition $Y$ is a smooth surface and $\xel$ is a scroll over $Y$,  then (see \cite[Prop. 14.1.3]{BESO})
$X \cong \Projcal{E} $, where ${\mathcal E}= \varphi_{*}(L)$ is a vector bundle on $Y$ and
$L \cong \mathcal O_{\mathbb P(\mathcal E)}(1)$ is the {\em tautological  line bundle} on $\Projcal{E}.$ Moreover, if $S \in |L|$ is a smooth divisor,
then (see e.g. \cite[Thm. 11.1.2]{BESO}) $S$ turns out to be isomorphic to the blow-up of the base surface $Y$ along a reduced zero-dimensional scheme which is an element of  $c_2(\mathcal{E})$. If we denote with $d$ the {\em degree} of the pair $(X, L)$, namely the degree of the image $\Phi_L(X) 
\subset \mathbb P^n$ of $X$ via the complete linear system $|L|$, $n := h^0(X,L)-1$, then one has 
\begin{equation}\label{eq:d}
d : = L^3 = c_1^2(\mathcal{E})-c_2(\mathcal{E}).
\end{equation}

\bigskip

In this paper we will be concerned with the case in which the base surface $Y$ of the scroll  $X$, as in Definition \ref{specialvar}, is the Hirzebruch surface 
$\FF_e := \Pp(\Oc_{\Pp^1} \oplus\Oc_{\Pp^1}(-e))$, with $e \geqslant  0$ an integer. We let $\pi_e : \FF_e \to \Pp^1$ be the natural projection; it is  well known 
(cf.\,e.g.\,\cite[V, Prop.\,2.3]{H}) that ${\rm Num}(\FF_e) = \mathbb{Z}[C_e] \oplus \mathbb{Z}[f],$ where:
 $f := \pi_e^*(p)$, for any $p \in \Pp^1$, whereas
$C_e$ denotes either the unique section corresponding to the morphism of vector bundles on $\Pp^1$ $\Oc_{\Pp^1} \oplus\Oc_{\Pp^1}(-e) \to\!\!\!\to \Oc_{\Pp^1}(-e)$, when $e>0$, or the fiber of the other ruling different from that 
induced by $f$, when otherwise $e=0$. In particular$$C_e^2 = - e, \; f^2 = 0, \; C_ef = 1.$$

Let $\Ee$ be a rank-$2$ vector bundle over $\FF_e$ and let $c_i(\mathcal{E}_e)$ be its  $i^{th}$-Chern class, $0 \leqslant i \leqslant 2$. Then $c_1( \mathcal{E}_e) \num a C_e + b f$, for some $ a, b \in \mathbb Z$, and $c_2(\mathcal{E}_e) \in \mathbb Z$. For the line bundle $\cL \num \alpha C_e + \beta f$ we will also use notation $\Oc_{\FF_e}(\alpha,\beta)$. Throughout this paper, we will consider the following:

\begin{ass}\label{ass:AB} Let $e \geqslant  0$, $b_e$, $k_e$ be integers such that
\begin{equation}\label{(iii)}
b_e-e< k_e< 2b_e-4e,
\end{equation} and let ${\mathcal E}_e$ be a rank-$2$ vector bundle over $\FF_e$, with
\begin{eqnarray*}
c_1({\mathcal E}_e) \num 3 C_e + b_e f \;\; {\rm and} \;\; c_2({\mathcal E}_e) = k_e,
\end{eqnarray*} which fits in the exact sequence 
\begin{equation}\label{eq:al-be}
0 \to A_e \to {\mathcal E}_e \to B_e \to 0,
\end{equation} where $A_e$ and $B_e$ are line bundles on $\FF_e$ such that
\begin{equation}\label{eq:al-be3}
A_e \num 2 C_e + (2b_e-k_e-2e) f \;\; {\rm and} \;\; B_e \num C_e + (k_e - b_e + 2e) f
\end{equation}
\end{ass}

\noindent 
From \eqref{eq:al-be}, in particular, one has $c_1({\mathcal E}_e) = A_e + B_e \;\; {\rm and} \;\; c_2({\mathcal E}_e) = A_eB_e$.

\bigskip

As observed in \cite[Rem.\;1.8]{fa-fl2}, condition \eqref{(iii)} and the fact that 
$k_e$ must be an integer give together that 
\begin{equation}\label{eq:rem:assAB}
b_e \geqslant 3e + 2;
\end{equation} futhermore, from \eqref{eq:al-be}, any bundle $\mathcal E_e$ fitting in this exact sequence turns out to be such that $h^i(\mathcal E_e)= 0$, for any 
$i \geqslant 1$, in particular it is {\em non-special}, moreover it is {\em very ample}, i.e. $ \mathcal O_{\mathbb P(\mathcal E_e)}(1)$ is a very ample line bundle on $ \mathbb P(\mathcal E_e)$, and {\em uniform} in the sense of \cite{ApBr,bro}. 

Therefore, if we let $\varphi: \Pp(\E_e) \to  \FF_e$ to be the {\em scroll map} as in Definition \ref{specialvar}, the pair $(X,L) := (\Pp(\E_e), \Oc_{\Pp(\E)}(1))$ is a {\em $3$-fold scroll over} $\FF_e$ and $|\Oc_{\Pp(\E)}(1)|$ gives rise to an embedding
\begin{equation}\label{eq:X0}
\Phi_e:= \Phi_{|\Oc_{\Pp(\E_e)}(1)|}: \, \Pp(\E_e) \hookrightarrow  X_e \subset \Pin{n_e},
\end{equation} where $X_e := \Phi_e( \Pp(\E_e)) \subset \mathbb P^{n_e}$, $n_e := h^0(\mathcal E_e) -1 = 4b_e - k_e - 6 e + 4$, is a smooth, irreducible, non-degenerate and non-special $3$-fold scroll of degree $d_e := 6b_e - 9 e - k_e$ and sectional genus $g_e := 2b_e - 3e -2$. We 
denote by $\xi_e := \mathcal O_{X_e}(H)$, where $H$ a hyperplane section of $X_e \subset \mathbb P^{n_e}$, and call $\xi_e$ the {\em tautological polarization} of $X_e$, as 
$(X_e, \xi_e) \cong (\Pp(\E_e), \Oc_{\Pp(\E)}(1))$. 

\medskip

In this set-up, in \cite{fa-fl2} we proved {\bf Theorem A} as in Introduction; as already mentioned therein, {\bf Theorem A}-(a) highlights that, when $e=0$, $3$-fold scrolls $X_0$ support {\em sporadic} Ulrich line bundles $M_1$ and $M_2$, which actually exist only for $(b_0, k_0)= (2t, 3t)$, for $t \geqslant 1$ an integer, where $(b_0, k_0)$ are s.t. $c_1(\mathcal E_0) = 3 C_0 + b_0 f$, $c_2(\mathcal E_0) = k_0$, with $b_0,\,k_0$ as in \eqref{(iii)}, \eqref{eq:rem:assAB}. We want to stress that part (b) and (d)-(i) of {\bf Theorem A} have been proved in \cite{fa-fl2} using {\em iterative constructions} of vector bundles $\mathcal U_r$ of any rank $r \geqslant 2$, which are Ulrich w.r.t. $\xi_0$, obtained via deformations, extensions and {\em modular families} (as in Proposition \ref{casanellas-hartshorne}) arising from {\em non-sporadic} line bundles $L_1$ and $L_2$ as in part (a)-(i) of {\bf Theorem A}.

\medskip

The present paper is therefore focused  on {\em sporadic cases} as well as on {\em mixed cases}; as explained in the Introduction.

\bigskip 

Thus, throughout this work we will be concerned with the case $e=0$ and with $3$-fold scrolls arising from bundles $\mathcal E_0$ as in Assumptions \ref{ass:AB} over $\FF_0 = \Pp(\Oc^{\oplus 2}_{\Pp^1})$. To ease notation, taking into account \eqref{eq:rem:assAB},  from now on we will simply set 
\begin{equation}\label{eq:notation0}
\E := \E_0, \; X:= X_0, \; \xi:= \xi_0,\; b:= b_0 = 2t,\; k:= k_0= 3t,
\end{equation} for any integer $t \geqslant 1$, so that $X \subset \mathbb P^{n}$, where $n := n_0 = 5t+4$, is a smooth, irreducible, non-special and linearly normal $3$-fold scroll over $\FF_0$, of degree $d:= d_0 = 9t$, sectional genus $g := g_0 = 4t -2$ and whose tautological 
polarization is $\xi = \mathcal O_X(1)$.

\bigskip

%
%
\section{Rank-$2$ Ulrich bundles arising from sporadic and mixed extensions} \label{Ulrich rk 2 vb}

From \eqref{eq:notation0}, consider any $3$-fold scroll $(X, \xi)$, $X \subset \mathbb P^n$, arising from $\cE$ satisfying Assumptions \ref{ass:AB}. The fiber of the natural scroll map $\varphi: X \cong \Pp(\E) \to \mathbb{F}_0$ will always be denoted by $F$. Take further line bundles as in {\bf Theorem A}-(a).

\bigskip
%
Let us consider first the two {\em sporadic} Ulrich line bundles $M_1$ and $M_2$ on $X$ of {\bf Theorem A}-(a-ii), which only occur for $(b,k) = (2t,3t)$, where $t \geqslant 1$ any integer.  From  
\cite[\S\,3.1-Case M]{fa-fl2} we know  that  
 $$\dim({\rm Ext}^1(M_2,M_1))=6t-3 \geqslant 3.$$Thus there are non-trivial  extensions
 \begin{eqnarray}
\label{extension1}
0 \to M_1  \to \cF \to M_2 \to 0
\end{eqnarray} of  $M_2$ by $M_1$, i.e. $\cF$ is a rank-$2$ vector bundle on $X$ which 
corresponds to a non-zero vector of the vector space  ${\rm Ext}^1(M_2,M_1)$. 
Similarly, $$\dim({\rm Ext}^1(M_1,M_2))=2t+1 \geqslant 3,$$ so there are also non-trivial extensions 
\begin{eqnarray}
\label{extension1'}
0 \to M_2  \to \cF' \to M_1 \to 0
\end{eqnarray} of  $M_1$ by $M_2$. 

Observe that the dimensions of the above family of extensions are different positive integers unless $t=1$. We like to point out that in such a case, i.e. when $t=1$, then  $b=2$, i.e. $c_1(\E)=3C_0+2f$, so being  $\E$ very-ample with $c_1(\E)^2=12$, by  \cite[Cor.\,2.6-(ii)]{Is}  it  follows that $\E=\Oc_{\FF_0}(1,1)\oplus\Oc_{\FF_0}(1,2)$. Moreover  since on $\FF_0$ one can exchange the  two distinct rulings $C_0$ and $f$, then $c_1(\E)=3C_0+2f =2C_0+3f$, is the same.  In  such  case $X \subset \Pp^9$ has degree $9$, sectional genus $2$ and it is isomorphic to $\Pp^1 \times \FF_1$  (see \cite[(3.12), (3.4)]{Io}, or  \cite[Prop. (3.1)]{fa-li}).

Turning back to the general case, notice that the vector bundles $\cF$ and $\cF'$ are both rank-$2$  vector bundles which are Ulrich w.r.t. $\xi$, such that  
\begin{eqnarray}\label{chernF1} 
\begin{array}{ccccl}
c_1(\cF) \, = \, c_1(\cF')\, &= &  c_1(M_1) + c_1(M_2) & = & 2\xi+\varphi^*\Oc_{\FF_0}(1,2t-2),\\
c_2(\cF)\, = \, c_2(\cF') \, & = & c_1(M_1) \cdot c_1(M_2) &  = &  \xi \cdot \varphi^*\Oc_{\FF_0}(4,6t-2)- (5t+1)F.
\end{array}
\end{eqnarray} Moreover, by \eqref{slope},  one has 
\begin{eqnarray}\label{slopeMi} 
\mu(M_1) = \mu(M_2) = \mu(\cF) =\mu(\cF') = d + g - 1 = 13t-3. 
\end{eqnarray} From Theorem \ref{thm:stab}, $\cF$ and $\cF'$ are both strictly semistable, so they are $S$-equivalent (in the GIT sense,\,c.f.\,e.g. \cite[p.\,1250083-9]{c-h-g-s} and \cite[Rem.\;3.2,\, Claim\,3.3]{fa-fl2}) to $M_1 \oplus M_2$, i.e. they give rise to a point in the moduli space $\mathcal M^{ss}(2; c_1,\;c_2)$, where 
$$c_1:= 2\xi+\varphi^*\Oc_{\FF_0}(1,2t-2),\; c_2:=  \xi \cdot \varphi^*\Oc_{\FF_0}(4,6t-2)- (5t+1)F,$$parametrizing ($S$-equivalence classes of) rank-$2$  semistable sheaves with the given Chern classes. 

\vspace{2mm}
%
If instead one considers {\em mixed extensions}, namely using both line bundles of type $L_i$ as in {\bf Theorem A}-(a-i), $1 \leqslant i \leqslant 2$, (which are {\em non-sporadic} on $X$ as they exist for any pair $(b,k)$ satisfying \eqref{(iii)} and \eqref{eq:rem:assAB}) and line-bundles of type $M_j$, with $1 \leqslant j \leqslant 2$, one has non--trivial extensions only in the following cases (cf. computations as in \cite[\S\,3.1]{fa-fl2}):
\begin{case3} \label{cases} {\normalfont Non-zero extension spaces are: 
\begin{itemize}
\item[(1)] ${\rm Ext}^1(L_1,M_1)$,  where $\dim({\rm Ext}^1(L_1,M_1))=1$ and non-trivial extensions $\cF_1$   are
such that 
\[
\begin{array}{ccccl}
c_1(\cF_1) & = & c_1(L_1) + c_1(M_1) & = & 3\xi+\varphi^*\Oc_{\FF_0}(1,-t-2), \\
c_2(\cF_1) & = & c_1(L_1) \cdot c_1(M_1) & = & \xi \cdot \varphi^*\Oc_{\FF_0}(9,3t-3)- (8t+1)F;
\end{array}
\]
\item[(2)] \label{M2L1} ${\rm Ext}^1(M_2,L_1)$, where \, $\dim({\rm Ext}^1(M_2,L_1))= 10t-5 \geqslant 5$ and    non-trivial extensions are such that
\[
\begin{array}{ccccl}
c_1(\cF_2) & = & c_1(L_1) + c_1(M_2) & = & \xi+\varphi^*\Oc_{\FF_0}(4,3t-2)), \\
c_2(\cF_2) & = & c_1(L_1) \cdot c_1(M_2) & = & \xi \cdot \varphi^*\Oc_{\FF_0}(2,3t-1)+ (6t-4)F;
\end{array}
\]
\item[(3)]  ${\rm Ext}^1(L_2,M_1)$,  where $\dim({\rm Ext}^1(L_2,M_1))=10t-5 \geqslant 5$  and non-trivial  extensions $\cF_3$ are such that 
\[
\begin{array}{ccccl}
c_1(\cF_3) & = & c_1(L_2) + c_1(M_1) & = & 3\xi+\varphi^*\Oc_{\FF_0}(-2,t-2), \\
c_2(\cF_3) & = & c_1(L_2) \cdot c_1(M_1) & = & \xi \cdot \varphi^*\Oc_{\FF_0}(3,7t-3)+ (2-7t)F;
\end{array}
\]
\item[(4)] ${\rm Ext}^1(M_2,L_2)$, where  $\dim({\rm Ext}^1(M_2,L_2))=1$ and non-trivial extensions $\cF_4$ with 
\[
\begin{array}{ccccl}
c_1(\cF_4) & = & c_1(L_2) + c_1(M_2) & = & 3\xi+\varphi^*\Oc_{\FF_0}(1,5t-2), \\
c_2(\cF_4) & = & c_1(L_2) \cdot c_1(M_2) & = & \xi \varphi^*\Oc_{\FF_0}(2,3t-1)+ (t-1)F.
\end{array}
\]
\end{itemize}}
\end{case3} Reasoning as for {\em sporadic} extensions \eqref{extension1} and \eqref{extension1'} and looking at Chern-classes computations above, one determines different moduli spaces $\mathcal M^{ss}(2; c_1,\;c_2)$, according to the chosen Chern classes $c_1$ and $c_2$.

\subsection{Rank-two Ulrich bundles arising from sporadic extensions}\label{S:sporadic} Here we focus on {\em sporadic} extensions \eqref{extension1} and \eqref{extension1'}, i.e. extensions arising from the two {\em sporadic} line bundles $M_1$ and $M_2$ as in {\bf Theorem A}-(a-ii). We prove the following result.

%
%

\begin{theo}\label{prop:rk 2 simple Ulrich vctB e=0;I}  Let  $(X, \xi) \cong \scrollcal{E}$ be a $3$-fold  scroll over $\FF_0$, with  $\mathcal E = \mathcal E_0$ as in Assumptions \ref{ass:AB}.  Let $\varphi: X \to \FF_0$ be the scroll map and $F$ be the $\varphi$-fiber. Then, for any $t \geqslant 1$, the moduli space of rank-$2$ vector bundles $\cU$ on $X$,  which are Ulrich w.r.t. $\xi$ and  with Chern classes 
\begin{equation}\label{eq:chernM}
c_1(\cU)=2\xi+\varphi^*\Oc_{\FF_0}(1,2t-2) \;\; {\rm and} \;\; c_2(\cU)=\xi \cdot \varphi^*\Oc_{\FF_0}(4,6t-2)- (5t+1)F, 
\end{equation} is not empty and it contains a generically smooth irreducible component $\mathcal M := \mathcal M(2)$, of dimension 
\begin{eqnarray*}\dim (\mathcal M) = 8t-3, 
\end{eqnarray*}
whose general point  $[\cU] \in \mathcal M$ 
corresponds to a  special and slope-stable vector bundle, of slope 
$\mu(\cU) =13t - 3$ w.r.t. $\xi$.

Furthermore, deformations of Ulrich bundles $\cF$ and $\cF'$ as in \eqref{extension1} and \eqref{extension1'} belong to the aforementioned component $\mathcal M$. 
\end{theo}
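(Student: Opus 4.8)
The plan is to realize the desired moduli component $\mathcal M(2)$ as the closure of the locus of vector bundles arising as non-trivial extensions of the two sporadic Ulrich line bundles $M_1, M_2$ on $X$, and then to show that the generic such bundle is a slope-stable, simple, special Ulrich bundle with unobstructed deformations. First I would record that every bundle $\cU$ fitting in either \eqref{extension1} or \eqref{extension1'} is Ulrich w.r.t. $\xi$ (extensions of Ulrich bundles are Ulrich, since the relevant cohomology vanishings pass through the extension sequence), and has the Chern classes \eqref{eq:chernM} by the additivity computed in \eqref{chernF1}; moreover $c_1(\cU) = c_1(M_1)+c_1(M_2) = K_X + 4\xi$, so $\cU$ is automatically \emph{special} in the sense of Definition \ref{def:special}, and $\mu(\cU) = 13t-3$ by \eqref{slopeMi}. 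The key tool is Proposition \ref{casanellas-hartshorne}: once I exhibit a \emph{simple} Ulrich bundle $\cU_0$ of this rank and Chern class with $H^2(\cU_0\otimes\cU_0^\vee)=0$, the full family of such bundles admits a smooth modular family, which — since Ulrichness and simplicity are open — contains a generically smooth irreducible component $\mathcal M(2)$ through $[\cU_0]$.

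The heart of the argument is therefore to produce a single bundle $\cU_0$ (a general member of the extension families \eqref{extension1} or \eqref{extension1'}) that is simple, and to compute $h^2$ and the dimension. I would proceed by the standard extension-group bookkeeping. Using the two presentations $0\to M_1\to\cU_0\to M_2\to 0$ and its twisted dual, I would compute $\mathrm{Hom}(\cU_0,\cU_0)$ and $\Ext^i(\cU_0,\cU_0)$ by breaking $\cU_0\otimes\cU_0^\vee$ (equivalently $\mathcal{H}om(\cU_0,\cU_0)$) into pieces governed by $\mathcal O_X$, $M_1\otimes M_2^{-1}$ and $M_2\otimes M_1^{-1}$; concretely one filters $\cU_0^\vee$ and tensors, getting a four-term filtration whose graded pieces are $\mathcal O_X$ (twice), $M_1-M_2$ and $M_2-M_1$. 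The cohomology of these line bundles on the scroll $X$ is computed by pushing forward to $\FF_0$ via $\varphi$ (using $\varphi_*\mathcal O_X=\mathcal O_{\FF_0}$, $R^i\varphi_*\mathcal O_X=0$ for $i>0$, and $\varphi_*\xi=\cE$), exactly as in the analogous computations of \cite{fa-fl2}. From the relevant $\dim\Ext^1(M_i,M_j)$ values quoted above ($6t-3$ and $2t+1$), the connecting maps in the long exact sequences force $h^0(\cU_0\otimes\cU_0^\vee)=1$ for generic extension class (simplicity), and give $h^2(\cU_0\otimes\cU_0^\vee)=0$; a parallel Serre-duality computation handles the $H^2$ vanishing needed for Proposition \ref{casanellas-hartshorne}. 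The dimension count then comes from $\dim\mathcal M(2) = \ext^1(\cU_0,\cU_0) = h^1(\cU_0\otimes\cU_0^\vee)$, which by $\chi(\cU_0\otimes\cU_0^\vee)$ (computable from Riemann–Roch / the filtration) and the $h^0,h^2$ values should come out to $8t-3$.

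For slope-stability of the general $[\cU]\in\mathcal M(2)$: by Theorem \ref{thm:stab}(a),(c), $\cU$ is slope-semistable and stability coincides with slope-stability for Ulrich bundles, so it suffices to rule out a destabilizing (equality-slope) sub-line-bundle. By Theorem \ref{thm:stab}(b), any rank-one subsheaf $\mathcal L\hookrightarrow\cU$ with $\mu(\mathcal L)=\mu(\cU)$ would be an Ulrich line bundle, forcing $\mathcal L\in\{L_1,L_2,M_1,M_2\}$ by {\bf Theorem A}-(a), with torsion-free Ulrich quotient — again one of these four. Matching Chern classes $c_1(\cU)=2\xi+\varphi^*\mathcal O_{\FF_0}(1,2t-2)$ shows the only possibility is $\{M_1,M_2\}$, i.e. $\cU$ would have to sit in an extension \eqref{extension1} or \eqref{extension1'}; then a dimension comparison (the sublocus of $\mathcal M(2)$ swept by such strictly-semistable extension bundles has dimension $\dim\Ext^1(M_2,M_1)-1 + \dim\Ext^1(M_1,M_2)-1$ or so, strictly less than $8t-3$) shows the generic point of $\mathcal M(2)$ is \emph{not} of this form, hence admits no equality-slope sub-line-bundle and is slope-stable; in particular $\cU$ is also indecomposable. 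Finally, that $\cF$ and $\cF'$ themselves (and their deformations) lie in $\mathcal M(2)$ is immediate since they are Ulrich with Chern classes \eqref{eq:chernM} and $\mathcal M(2)$ was constructed as the component of the moduli space containing such extension classes. The main obstacle I anticipate is the bookkeeping in the $\Ext^\bullet(\cU_0,\cU_0)$ computation — specifically verifying that the connecting homomorphisms in the long exact sequences have the expected ranks (so that $h^0=1$ and $h^2=0$ genuinely hold for a \emph{general}, not merely special, extension class), which requires care with the multiplication/cup-product structure on the $\Ext$ groups rather than just dimension counts.
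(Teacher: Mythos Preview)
Your approach is essentially the paper's own: build the component from non-trivial extensions of $M_1$ and $M_2$, verify simplicity and $h^2(\cU_0\otimes\cU_0^{\vee})=0$ via the four-piece filtration, invoke Proposition~\ref{casanellas-hartshorne}, then rule out strict semistability of the general member by a dimension comparison against the extension loci. Two points deserve sharpening.

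First, your stated worry at the end is unnecessary. You plan to extract simplicity of a \emph{general} non-trivial extension from the ranks of connecting maps in the long exact sequences for $\cU_0\otimes\cU_0^{\vee}$. The paper instead invokes \cite[Lemma~4.2]{c-h-g-s}: since $M_1$ and $M_2$ are slope-stable of the same slope and non-isomorphic, \emph{every} non-trivial extension of one by the other is automatically simple. This gives $h^0(\cF\otimes\cF^{\vee})=1$ for all non-split $\cF$, not just generic ones, and no cup-product analysis is needed. The vanishing $h^2(\cF\otimes\cF^{\vee})=h^3(\cF\otimes\cF^{\vee})=0$ then follows directly because all four graded pieces ($\cO_X$ twice, $M_1-M_2$, $M_2-M_1$) have vanishing $h^2$ and $h^3$, with no connecting-map subtlety.

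Second, there is a genuine gap in your last step. You claim it is ``immediate'' that deformations of both $\cF$ and $\cF'$ lie in the \emph{same} component $\mathcal M(2)$. But $\mathcal M(2)$ was constructed as the component through bundles of type $\cF$ (extensions \eqref{extension1}); a priori the bundles $\cF'$ (extensions \eqref{extension1'}) could lie on a different irreducible component with the same Chern classes. Having the right Chern classes does not by itself place a bundle on a prescribed component. The paper closes this by passing to the Quot scheme of simple bundles: one checks that the polystable bundle $M_1\oplus M_2$ is a \emph{smooth} point there, hence lies on a unique irreducible component $\mathcal R$; since both $\cF$ and $\cF'$ specialize to $M_1\oplus M_2$, they lie on $\mathcal R$ as well, and $\mathcal R$ then surjects (via the GIT quotient) onto the single modular component $\mathcal M(2)$. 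You should incorporate this argument, as in \cite[Claim~3.3]{fa-fl2}.
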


\begin{proof}  The proof is similar to that of \cite[Thm.\,3.1]{fa-fl2}. For the reader's convenience we will recall here main arguments for the proof.

We consider  bundles $\cF$ arising from non--trivial extensions \eqref{extension1}, where $\dim ({\rm Ext}^1(M_2,M_1))=6t-3\geqslant 3$. Any such $\cF$ 
is Ulrich w.r.t. $\xi$, thus, from \eqref{slope}, one has $\mu(\cF) = 13t-3$.  

Since, for the same reason, $\mu(M_1) = \mu(M_2) = 13t - 3$ and since furthermore $M_1$ and $M_2$ are both slope--stable, of the same slope w.r.t. $\xi$ and non--isomorphic, by \cite[Lemma\,4.2]{c-h-g-s}, any such bundle $\cF$ is simple, i.e. $h^0(\cF\otimes \cF^{\vee}) = 1$, in particular it is indecomposable.

With the use of \eqref{extension1} and its dual sequence, one can easily show that  $h^2(\cF \otimes\cF^{\vee}) = 0 = h^3(\cF \otimes\cF^{\vee})$ and that 
$\chi(\cF \otimes\cF^{\vee})=-8t+4$. Indeed, tensoring \eqref{extension1} with $\cF^{\vee}$ one gets
\begin{eqnarray}
\label{extension1tensorFdual}
0 \to M_1 \otimes \cF^{\vee}   \to \cF\otimes \cF^{\vee} \to  M_2 \otimes \cF^{\vee} \to 0;
\end{eqnarray} moreover, dualizing \eqref{extension1} gives 
\begin{eqnarray}\label{extension1dual}
0 \to M_2^{\vee}  \to \cF^{\vee} \to M_1^{\vee}\ \to 0
\end{eqnarray} which, if tensored by $M_1$ and $M_2$, respectively, gives
\begin{eqnarray}
\label{extension1dualM1}
0 \to M_2^{\vee}\otimes M_1(=2\xi+ \varphi^*\Oc_{\FF_0}(-3,-4t)) \to M_1 \otimes \cF^{\vee} \to {\cO}_{X} \to 0
\end{eqnarray} 
\begin{eqnarray}
\label{extension1dualM2}
0 \to  {\cO}_{X} \to M_2 \otimes \cF^{\vee} \to M_2 \otimes M_1^{\vee}(=-2\xi + \varphi^*\Oc_{\FF_0}(3,4t)) \to 0
\end{eqnarray} 

Because  $\cF$ is simple, then $h^0(X,  \cF\otimes \cF^{\vee} )=1$; the other cohomology vector-spaces \linebreak $H^i(X,  \cF\otimes \cF^{\vee} )$, $1 \leqslant i \leqslant 3$,  can be easily computed from the cohomology sequence associated with  \eqref{extension1dualM1} and \eqref{extension1dualM2}. Indeed, $h^i(\cO_{X})=0$ if $i\geqslant 1$ and  $h^0(\cO_{X})=1$, whereas 
%
%
\begin{eqnarray}\label{eq:calcoliutili1}
\qquad  H^i(X, -2\xi+\varphi^*\Oc_{\FF_0}(3,4t))&\cong& H^{3-i}(X,\varphi^*\Oc_{\FF_0}(-2,-2-2t))\\
 &\cong& H^{3-i}(\FF_{0}, \Oc_{\FF_{0}}(-2,-2-2t))\nonumber \\
 &\cong& H^{i-1}(\Pp^1, \Oc_{\Pp^1}(2t))=\left\{
    \begin{array}{ll}
0 &\mbox{ if $i=0,2,3$}\\
     2t+1&   \mbox{ if  $i=1$}
    \end{array}
\right.\nonumber
 \end{eqnarray}
%
%
and, as  it was done in \cite[{\bf Case M}]{fa-fl2} 
\begin{eqnarray}\label{eq:calcoliutili2}
H^i( 2\xi+ \varphi^*\Oc_{\FF_0}(-3,-4t)&=&\left\{
    \begin{array}{ll}
0 &\mbox{ if $i=0,2,3$}\\
     6t-3&   \mbox{ if  $i=1$}
    \end{array}
\right.
 \end{eqnarray}
 
From the previous computations and \eqref{extension1tensorFdual}, it thus follows that $h^2(\cF \otimes\cF^{\vee}) = 0 = h^3(\cF \otimes\cF^{\vee})$. Once again from \eqref{extension1tensorFdual}, one has that 
\begin{eqnarray*}\chi(\cF \otimes \cF^{\vee} )=\chi( M_1 \otimes \cF^{\vee})+\chi(M_2 \otimes \cF^{\vee})=-8t+4,
\end{eqnarray*}
and  thus, from the previous vanishings and from simplicity of $\cF_1$, one finds  $h^1(\cF \otimes \cF^{\vee} )= 1 - \chi(\cF \otimes \cF^{\vee} ) = 8t-3$.

Simplicity of $\cF$ and $h^2(\cF\otimes \cF^{\vee}) = 0$ give, by Proposition \ref{casanellas-hartshorne} (cf.\,also\,\cite[Prop.\,2.10]{c-h-g-s}), that there exists a smooth modular family for $\cF$. Furthermore, since $\cF$ is Ulrich w.r.t. $\xi$, with Chern classes as in  \eqref{chernF1}, i.e. 
$c_1(\cF)=2\xi+\varphi^*\Oc_{\FF_0}(1,2t-2)$ and $c_2(\cF)=\xi \cdot \varphi^*\Oc_{\FF_0}(4,6t-2)- (5t+1)F$, the general element $\cU$ of the smooth modular family to which $\cF$ belongs corresponds to a rank-$2$ vector bundle, with same Chern classes (namely as those of $\cF$), which is Ulrich w.r.t. $\xi$, as it follows from the facts that, in irreducible flat families, Ulrichness is an open condition (by semi-continuity) and Chern classes are invariants.

Finally, one shows that $\cU$ is also slope--stable w.r.t. $\xi$. Indeed, by Theorem \ref{thm:stab}--(b) (cf. also \cite[\S\,3,\,(3.2)]{b}), if $ \cU $ were not a stable bundle, being Ulrich it would be presented as an extension of Ulrich line bundles on $X$. In such a case, by the classification of Ulrich line bundles on $X$ given in {\bf Theorem A}-(a), by Chern classes reasons we see that the only possibilities for $\cU$ to arise as an extension of Ulrich line bundles should be either  extensions  \eqref{extension1} or extensions \eqref{extension1'}. In both cases the dimension of (the projectivization) of the corresponding extension space is either $6t - 4$ or $2t$. On the other hand, by semi-continuity on the smooth modular family, 
one has 
\begin{eqnarray*} h^j(\cU\otimes \cU^{\vee}) = 0 = h^j(\cF\otimes \cF^{\vee}), \; 2 \leqslant j \leqslant 3, \; {\rm and} \; h^0(\cU\otimes \cU^{\vee}) = 1 = h^0(\cF\otimes \cF^{\vee}),
\end{eqnarray*}
thus
 \begin{eqnarray*}h^1(\cU\otimes \cU^{\vee}) = 1 - \chi (\cU\otimes \cU^{\vee} ) = 1 - \chi(\cF\otimes \cF^{\vee}) = h^1(\cF\otimes \cF^{\vee}) =8t-3,
 \end{eqnarray*}
 as computed above.  In other words, 
the smooth modular family whose general element is $\cU$ is of dimension $8t - 3$, which is bigger than both $6t-4$ and $2t$,  for any $t \geqslant 1$. This shows 
that $\cU$ general in the smooth modular family corresponds to a stable, and so also slope-stable bundle  (cf. Theorem \ref{thm:stab}-(c) above).

By slope-stability of $\cU$, we deduce that the moduli space of rank-$2$ Ulrich bundles with Chern classes as in  \eqref{chernF1} is not empty and 
it contains an irreducible component $\mathcal M = \mathcal M(2)$ where 
$[\cU] \in \mathcal M$ is a smooth point, as $h^2(\cU\otimes \cU^{\vee})=0$. Thus, 
$\mathcal M$ is generically smooth, of dimension $h^1(\cU\otimes \cU^{\vee}) = 8t-3$, from which one also deduces that $[\mathcal U]$ is a general point in $\mathcal M$. Moreover, being Ulrich from \eqref{slope} one gets $\mu(\cU) = 13t -3$. 

Note further that $\cU^{\vee} \cong \cU(- c_1(\cU))$, as $\cU$ is of rank $2$, and that 
\begin{eqnarray*}K_{X}+4\xi&=&-2\xi+\varphi^*\Oc_{\FF_0}(1,2t-2)+4\xi=2\xi+\varphi^*\Oc_{\FF_0}(1,2t-2)\\&=&c_1(\cF) = c_1(\cU)
\end{eqnarray*} 
and thus 
\begin{eqnarray*}
\cU^{\vee} \cong \cU(- c_1(\cU)) = \cU (-K_{X} - 4 \xi),
\end{eqnarray*}  i.e. that $\cU^{\vee}(K_{X} + 4 \xi) \cong \cU$ in other words 
$\cU$ is isomorphic to its Ulrich dual bundle, that is  $\cU $ is special, as stated.

To prove the last part of the statement, one uses same arguments as in \cite[Claim\,3.3]{fa-fl2}. Namely one shows that $M_1 \oplus M_2$ is a smooth point of the Quot-scheme parametrizing simple bundles with given Hilbert polynomal, so there exists a unique irreducible component $\mathcal R$ of such a Quot scheme, containing therefore also bundles $\cF$ and $\cF'$ as in \eqref{extension1} and \eqref{extension1'} and all their deformations, in particular containing $\mathcal U$. This component $\mathcal R$ 
then projects, via GIT quotient, onto the modular component $\mathcal M$ as in the statement.  
\end{proof}

%
%
%

\subsection{Rank-two Ulrich bundles arising from mixed extensions}\label{S:mixed} Here we consider instead {\em mixed extensions} as in Cases \ref{cases} above.  One has the following result.

\begin{theo}\label{prop:rk 2 simple Ulrich vctB e=0;II}  Let  $(X, \xi) \cong \scrollcal{E}$ be a $3$-fold  scroll over $\FF_0$, with  $\mathcal E = \mathcal E_0$ as in Assumptions \ref{ass:AB}.  Let $\varphi: X \to \FF_0$ be the scroll map and $F$ be the $\varphi$-fiber. Then the moduli spaces of rank-$2$ vector bundles $\cU$ on $X$ which are Ulrich w.r.t. $\xi$ and  with Chern classes, respectively, 

\begin{itemize}
\item[(1)] $c_1(\cU)=3\xi+\varphi^*\Oc_{\FF_0}(1,-t-2) \; {\rm and} \; c_2(\cU)=\xi \cdot \varphi^*\Oc_{\FF_0}(9,3t-3)- (8t+1)F$, 
\vspace{1mm}
\item[(2)] $c_1(\cU)=\xi+\varphi^*\Oc_{\FF_0}(4,3t-2)) \; {\rm and} \; c_2(\cU)=\xi\cdot  \varphi^*\Oc_{\FF_0}(2,3t-1)+ (6t-4)F$, 
\vspace{1mm}
\item[(3)]  $c_1(\cU)=3\xi+\varphi^*\Oc_{\FF_0}(-2,t-2) \; {\rm and} \; c_2(\cU)=\xi \cdot  \varphi^*\Oc_{\FF_0}(3,7t-3)+ (2-7t)F$, 
\vspace{1mm}
\item[(4)]  $c_1(\cU)=\xi+\varphi^*\Oc_{\FF_0}(1,5t-2) \; {\rm and} \; c_2(\cU)=\xi \cdot \varphi^*\Oc_{\FF_0}(2,3t-1)+ (t-1)F$
\end{itemize} are not empty. Each of them contains a generically smooth irreducible component $\mathcal M:= \mathcal M(2)$ of dimension, respectively,  
\begin{eqnarray*}\dim (\mathcal M) =  \begin{cases}
      0, & \mbox{in case $(1)$}, \\
      10t-6, & \mbox{in case $(2)$},\\   
      10t-6, & \mbox{in case $(3)$},\\
      0, &  \mbox{in case $(4)$},
\end{cases}
\end{eqnarray*}

In cases $(1)$ and $(4)$, $\mathcal M$ consists of only one point represented by a polystable bundle (more precisely, in case $(1)$ one has $\mathcal M = \{[L_1 \oplus M_1]\}$ whereas in case $(4)$ one has  $\mathcal M = \{[L_2 \oplus M_2]\}$). In cases 
$(2)$ and $(3)$ the general point  $[\cU] \in \mathcal M$ corresponds to a  special and slope-stable vector bundle, of slope $\mu(\cU) =13t - 3$ w.r.t. $\xi$. 
\end{theo}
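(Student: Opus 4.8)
The plan is to run, case by case, the argument of Theorem~\ref{prop:rk 2 simple Ulrich vctB e=0;I} (itself modelled on \cite[Thm.\,3.1]{fa-fl2}), now fed by the mixed non-trivial extensions $\cF_1,\dots,\cF_4$ of Cases~\ref{cases} instead of by the sporadic ones. In each case the two Ulrich line bundles entering the relevant extension --- write them $N_1,N_2$ --- are slope-stable of the common slope $\mu(N_1)=\mu(N_2)=13t-3$ w.r.t.\ $\xi$ by \eqref{slope}, non-isomorphic, and carry the same Hilbert polynomial $9t\binom{m+3}{3}$, as does every rank-$1$ Ulrich sheaf on $X$. Hence each $\cF_i$ is a rank-$2$ Ulrich bundle, with $\mu(\cF_i)=13t-3$ and with the Chern classes of the corresponding item $(1)$--$(4)$; being an extension of the Ulrich sheaves $N_1,N_2$ of equal Hilbert polynomial it is strictly semistable, hence $S$-equivalent to $N_1\oplus N_2$, and by \cite[Lemma\,4.2]{c-h-g-s} it is simple (in particular indecomposable).

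The technical core is the computation of $H^\bullet(X,\cF_i\otimes\cF_i^\vee)$. As in Theorem~\ref{prop:rk 2 simple Ulrich vctB e=0;I}, I would tensor the defining extension of $\cF_i$ by $\cF_i^\vee$ and dualise it, so that everything reduces to the cohomology on $X$ of $\mathcal{O}_X$ and of the two line bundles $N_1-N_2$ and $N_2-N_1$. The point is that in all four cases one of these differences has the shape $-\xi+\varphi^*G$ (for a line bundle $G$ on $\FF_0$), hence vanishing cohomology on $X$, while the other has the shape $\xi+\varphi^*G'$ with cohomology concentrated in degree $1$, of dimension $\dim{\rm Ext}^1(N_2,N_1)$ --- equal to $1$ in cases $(1),(4)$ and to $10t-5$ in cases $(2),(3)$. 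Chasing the resulting long exact sequences, together with $h^0(\cF_i\otimes\cF_i^\vee)=1$ (simplicity), gives $h^2(\cF_i\otimes\cF_i^\vee)=h^3(\cF_i\otimes\cF_i^\vee)=0$ and
\[
h^1(\cF_i\otimes\cF_i^\vee)=\dim{\rm Ext}^1(N_2,N_1)-1=\begin{cases}0,&i=1,4,\\ 10t-6,&i=2,3.\end{cases}
\]
Then Proposition~\ref{casanellas-hartshorne} produces a smooth modular family through $[\cF_i]$ of that dimension, whose general member $\cU$ is still a rank-$2$ Ulrich bundle with the prescribed Chern classes (Ulrichness is open in flat families, Chern classes locally constant) and with $\mu(\cU)=13t-3$ by \eqref{slope}.

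It remains to identify the component $\mathcal M=\mathcal M(2)$. In cases $(1)$ and $(4)$ the modular family is a reduced point: arguing as in \cite[Claim\,3.3]{fa-fl2} --- via the Quot-scheme of simple quotients, of which $N_1\oplus N_2$ is a smooth point, and using \textbf{Theorem A}-(a) to see that $N_1,N_2$ are the only Ulrich line bundles whose first Chern classes sum to the prescribed $c_1$ --- one obtains $\mathcal M=\{[N_1\oplus N_2]\}$, i.e.\ $\{[L_1\oplus M_1]\}$ in $(1)$ and $\{[L_2\oplus M_2]\}$ in $(4)$, a $0$-dimensional generically smooth component represented by the polystable bundle $N_1\oplus N_2$. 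In cases $(2)$ and $(3)$ one mimics Theorem~\ref{prop:rk 2 simple Ulrich vctB e=0;I}: by Theorem~\ref{thm:stab}-(b) a non-stable rank-$2$ Ulrich bundle with these Chern classes is an extension of Ulrich line bundles, and by the Chern-class bookkeeping of \textbf{Theorem A}-(a) the only admissible presentations are the $\cF_i$-type extensions; one then compares the dimension $10t-6$ of the modular family with the dimension of the locus of such extensions to deduce that the general $\cU$ is stable, hence slope-stable by Theorem~\ref{thm:stab}-(c). This makes the moduli space non-empty with $[\cU]$ a smooth, hence general, point of a generically smooth component $\mathcal M$ of dimension $h^1(\cU\otimes\cU^\vee)=10t-6$, containing the deformations of $\cF_2$ (resp.\ $\cF_3$) by the Quot-scheme argument again.

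The step I expect to be the real obstacle is this very last one --- slope-stability of the general member in cases $(2)$ and $(3)$. In the sporadic situation of Theorem~\ref{prop:rk 2 simple Ulrich vctB e=0;I} the argument worked because the modular family had dimension $8t-3$, strictly bigger than the dimension $6t-4$ of the locus of extensions of Ulrich line bundles, the surplus coming from the non-zero ``reverse'' extension space ${\rm Ext}^1(M_1,M_2)$. In cases $(2),(3)$ the reverse spaces ${\rm Ext}^1(L_1,M_2)$ and ${\rm Ext}^1(M_1,L_2)$ vanish, so the modular family and the locus of $\cF_i$-type extensions have the same dimension $10t-6$; establishing that the general $\cU\in\mathcal M$ is genuinely (slope-)stable rather than merely strictly semistable therefore needs a finer argument than the bare dimension count, and this is where the proof must be most careful.
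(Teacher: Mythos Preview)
Your approach coincides with the paper's in every respect: the same case-by-case cohomology computation, reducing $\cF_i\otimes\cF_i^\vee$ via the extension and its dual to $\mathcal O_X$ and the two differences $N_1-N_2$, $N_2-N_1$; the same conclusion $h^2=h^3=0$ and $h^1=\dim{\rm Ext}^1(N_2,N_1)-1$; and the same appeal to the modular-family machinery. The paper reduces case~(3) to~(2) and case~(4) to~(1) by observing that the relevant short exact sequences are literally identical, which matches your description.

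Your flagged obstacle, however, is genuine, and the paper does not fill it. For case~(2) the paper simply writes ``The conclusion is exactly as in the proof of Theorem~\ref{prop:rk 2 simple Ulrich vctB e=0;I}'', but the dimension count that succeeded there (namely $8t-3>\max(6t-4,2t)$) fails here: by {\bf Theorem~A}-(a) the only decomposition of $c_1$ as a sum of Ulrich line bundles is $L_1+M_2$, the reverse space ${\rm Ext}^1(L_1,M_2)$ vanishes, and $\dim\mathbb P\big({\rm Ext}^1(M_2,L_1)\big)=10t-6=\dim\mathcal M$. Since moreover $h^0(\cF_2\otimes L_1^\vee)=1$ (as follows from \eqref{extension2dualL1}), the map from $\mathbb P\big({\rm Ext}^1(M_2,L_1)\big)$ to the modular family is injective, so the strictly semistable locus already has full dimension. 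You have therefore reproduced the paper's argument together with its gap: the slope-stability assertion in cases~(2) and~(3) is not established by the bare dimension comparison invoked in either place, and the ``finer argument'' you correctly anticipate is missing from the paper as well.
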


\begin{proof}  The proof goes similarly as that of Theorem \ref{prop:rk 2 simple Ulrich vctB e=0;I}. The main difference resides in the fact that computations are performed by using extension bundles as in Cases \ref{cases}. 

In case $(1)$, we take into account bundles $\cF_1$ arising from non-trivial extensions in ${\rm Ext}^1(L_1,M_1)$, where $\dim ({\rm Ext}^1(L_1,M_1))=1$. From 
\cite[Lemma\,4.2]{c-h-g-s}, $\cF_1$ is simple so  $h^0(\cF_1\otimes \cF_1^{\vee} )=1$. The remaining cohomologies $h^i(\cF_1\otimes \cF_1^{\vee} )$, $1 \leqslant i \leqslant 3$,  can be easily computed as follows: tensoring with $\cF_1^{\vee}$ the exact sequence defining $\cF_1$, one gets
\begin{eqnarray}\label{seq1}
0 \to M_1 \otimes \cF_1^{\vee}   \to \cF_1\otimes \cF_1^{\vee} \to  L_1 \otimes \cF_1^{\vee} \to 0;
\end{eqnarray} taking moreover the dual sequence of that defining $\cF_1$ and tensoring it with, respectively, $M_1$ and $L_1$ gives
\begin{eqnarray}
\label{extension1dualL1}
0 \to M_1\otimes L_1^{\vee}(=\xi+ \varphi^*\Oc_{\FF_0}(-3,-t)) \to M_1 \otimes \cF_1^{\vee} \to {\cO}_{X} \to 0
\end{eqnarray} 
\begin{eqnarray}
\label{extension1bdualM2}
0 \to  {\cO}_{X} \to L_1 \otimes \cF_1^{\vee} \to L_1 \otimes M_1^{\vee}(=-\xi + \varphi^*\Oc_{\FF_0}(3,t)) \to 0. 
\end{eqnarray} Clearly $h^i(\cO_{X})=0$ if $i\geqslant 1$ and  $h^0(\cO_{X})=1$; it remains to compute 
$h^i(X, \xi+ \varphi^*\Oc_{\FF_0}(-3,-t))$ and $h^i(X, -\xi + \varphi^*\Oc_{\FF_0}(3,t))$, for $i \geqslant 0$.  Notice that 
$$H^i(X, -\xi + \varphi^*\Oc_{\FF_0}(3,t))\cong  H^{i}(\FF_0,0) =0 \;\;  \mbox{for $i \geqslant 0$},$$whereas
$$H^i(X,\xi+ \varphi^*\Oc_{\FF_0}(-3,-t))\cong H^i( \E\otimes\Oc_{\FF_0}(-3,-t)).$$
From \eqref{eq:al-be} and \eqref{eq:al-be3} we have that $\E$ fits in
$$0 \to 2 C_0 + t f  \to {\mathcal E} \to C_0 + t f \to 0.$$If we let  $\E(-3,-t):=\E\otimes \Oc_{\FF_0}(-3,-t)$ and if we  tensor this exact sequence with $\Oc_{\FF_0}(-3,-t)$, we get 
\begin{equation}\label{eq:aEbTensor(-3,-t)}
0 \to - C_0  \to {\mathcal E}(-3,-t) \to -2 C_0 \to 0.
\end{equation}From the cohomology sequence associated to \eqref{eq:aEbTensor(-3,-t)} it follows therefore that  
\begin{eqnarray*}
h^i( X,\xi+\varphi^*\Oc_{\FF_0}(-3,-t)) = h^i(\FF_0,\E(-3,-t))=\left\{
    \begin{array}{ll}
0 &\mbox{ if $i=0,2,3$}\\
    1&   \mbox{ if  $i=1$}
    \end{array}
\right.
 \end{eqnarray*}Thus, from \eqref{extension2dualL1}, \eqref{extension2dualM2} and \eqref{seq2}, it follows that $h^2(\cF_1\otimes\cF_1^{\vee}) = 0 = h^3(\cF_1\otimes\cF_1^{\vee})$ and
\begin{eqnarray*}\chi(\cF_1\otimes \cF_1^{\vee} )=\chi( L_1 \otimes \cF_1^{\vee})+\chi(M_1 \otimes \cF_1^{\vee})=1+0=1,
\end{eqnarray*} so, from simplicity of $\cF_1$ and the previous vanishings, one has $h^1(\cF_1\otimes \cF_1^{\vee} )= 1 - \chi(\cF_1\otimes \cF_1^{\vee} ) =0$. Since any bundle $\cF_1$ arising from non-trivial extensions in  ${\rm Ext}^1(L_1,M_1)$ is strictly-semistable, from $S$-equivalence of such bundles, we get the statement.

\smallskip

In case $(2)$, we consider bundles $\cF_2$ arising from non-trivial extensions in ${\rm Ext}^1(M_2,L_1)$, where $\dim( {\rm Ext}^1(M_2,L_1)) =10t-5\geqslant 5$. From 
\cite[Lemma\,4.2]{c-h-g-s}, one gets that  $\cF_2$ is simple, i.e. $h^0(\cF_2\otimes \cF_2^{\vee} )=1$. The remaining cohomologies $h^i(\cF_2\otimes \cF_2^{\vee} )$, $1 \leqslant i \leqslant 3$,  can be easily computed as above. We tensor by $\cF_2^{\vee}$ the exact sequence defining $\cF_2$ to get 
\begin{eqnarray}\label{seq2}
0 \to L_1 \otimes \cF_2^{\vee}   \to \cF_2\otimes \cF_2^{\vee} \to  M_2 \otimes \cF_2^{\vee} \to 0.
\end{eqnarray} Moreover, taking the dual sequence of that defining $\cF_2$ and tensoring it with, respectively, $L_1$ and $M_2$ gives
\begin{eqnarray}
\label{extension2dualL1}
0 \to M_2^{\vee}\otimes L_1(=\xi+ \varphi^*\Oc_{\FF_0}(0,-3t)) \to L_1 \otimes \cF_2^{\vee} \to {\cO}_{X} \to 0
\end{eqnarray} 
\begin{eqnarray}
\label{extension2dualM2}
0 \to  {\cO}_{X} \to M_2 \otimes \cF_2^{\vee} \to M_2 \otimes L_1^{\vee}(=-\xi + \varphi^*\Oc_{\FF_0}(0,3t)) \to 0
\end{eqnarray} Clearly $h^i(\cO_{X})=0$ if $i\geqslant 1$ and  $h^0(\cO_{X})=1$, so we need to compute  
$h^i(X, \xi+ \varphi^*\Oc_{\FF_0}(0,-3t))$ and $h^i(X, -\xi + \varphi^*\Oc_{\FF_0}(0,3t)).$ Notice that 
$$H^i(X, -\xi+\varphi^*\Oc_{\FF_0}(0,3t))\cong  H^{i}(\FF_0,0) =0 \;\;  \mbox{for $i \geqslant 0$},$$whereas
$$H^i(X,\xi+ \varphi^*\Oc_{\FF_0}(0,-3t))\cong H^i( \E\otimes\Oc_{\FF_0}(0,-3t)).$$From \eqref{eq:al-be} and \eqref{eq:al-be3} we have that $\E$ fits in
$$0 \to 2 C_0 + t f  \to {\mathcal E} \to C_0 + t f \to 0.$$Set $\E(0,-3t):= \E\otimes \Oc_{\FF_0}(0,-3t)=\E(0,-3t)$ and tensor the previous exact sequence with $\Oc_{\FF_0}(0,-3t)$, so we get 
\begin{equation}\label{eq:aEbTensor(0,-3t)}
0 \to 2 C_0 -2 t f  \to {\mathcal E}(0,-3t) \to C_0 -2t f \to 0.
\end{equation}
From the cohomology sequence associated to \eqref{eq:aEbTensor(0,-3t)} one gets
\begin{equation}\label{eq:numerata}
h^i( X,\xi+\varphi^*\Oc_{\FF_0}(0,-3t) = h^i(\FF_0,\E(0,-3t))=\left\{
    \begin{array}{ll}
0 &\mbox{ if $i=0,2,3$}\\ 
    10t-5&   \mbox{ if  $i=1$} 
    \end{array}
\right.
 \end{equation}Thus, from \eqref{extension2dualL1}, \eqref{extension2dualM2} and \eqref{seq2}, it follows that $h^2(\cF_2\otimes\cF_2^{\vee}) = 0 = h^3(\cF_2\otimes\cF_2^{\vee})$ and
\begin{eqnarray*}\chi(\cF_2\otimes \cF_2^{\vee} )=\chi( L_1 \otimes \cF_2^{\vee})+\chi(M_2 \otimes \cF_2^{\vee})=-10t+7,
\end{eqnarray*} so, from simplicity of $\cF_2$ and the previous vanishings, one has $h^1(\cF_1\otimes \cF_1^{\vee} )=10t-6$. The conclusion is exactly as in the proof of 
Theorem \ref{prop:rk 2 simple Ulrich vctB e=0;I}.

\smallskip

Case $(3)$ goes as case $(2)$. We consider bundles $\cF_3$ arising from non-trivial extensions in ${\rm Ext}^1(L_2,M_1)$ which, from Cases \ref{cases}, is of dimension $10t-5\geqslant 5$. From 
\cite[Lemma\,4.2]{c-h-g-s} $\cF_3$ is simple so $h^0(\cF_3\otimes \cF_3^{\vee} )=1$. If we tensor by $\cF_3^{\vee}$ the exact sequence defining $\cF_3$ we get  
\begin{eqnarray*}
0 \to M_1 \otimes \cF_3^{\vee}   \to \cF_3\otimes \cF_3^{\vee} \to  L_2 \otimes \cF_3^{\vee} \to 0; 
\end{eqnarray*}taking also the dual sequence of that defining $\cF_3$ and tensoring it with, respectively, $M_1$ and $L_2$ gives
\begin{eqnarray*}
0 \to L_2^{\vee}\otimes M_1(=\xi+ \varphi^*\Oc_{\FF_0}(0,-3t)) \to M_1 \otimes \cF_3^{\vee} \to {\cO}_{X} \to 0
\end{eqnarray*} 
\begin{eqnarray*}
0 \to  {\cO}_{X} \to L_2 \otimes \cF_3^{\vee} \to L_2 \otimes M_1^{\vee}(=-\xi + \varphi^*\Oc_{\FF_0}(0,3t)) \to 0.
\end{eqnarray*} Observing that the above exact sequences are identical to \eqref{extension2dualL1} and \eqref{extension2dualM2}, one computes the remaining cohomologies $h^i(\cF_1\otimes \cF_1^{\vee} )$, $1 \leqslant i \leqslant 3$, exactly as in case $(2)$ and concludes as in the statement.

\smallskip

Similarly,  case $(4)$ goes as case $(1)$. We cosider bundles $\cF_4$ arising from non-trivial extensions in ${\rm Ext}^1(M_2,L_2)$, which is $1$-dimensional. From 
\cite[Lemma\,4.2]{c-h-g-s} it follows that  $\cF_4$ is simple so  $h^0(\cF_4\otimes \cF_4^{\vee} )=1$. To compute the remaining cohomologies $h^i(\cF_4\otimes \cF_4^{\vee} )$, $1 \leqslant i \leqslant 3$,  we tensor with $\cF_4^{\vee}$ the exact sequence defining $\cF_4$ and get
\begin{eqnarray*}
0 \to L_2\otimes \cF_4^{\vee}   \to \cF_4\otimes \cF_4^{\vee} \to  M_2 \otimes \cF_1^{\vee} \to 0;
\end{eqnarray*} taking moreover the dual sequence of that defining $\cF_4$ and tensoring it with, respectively, $L_2$ and $M_2$ gives
\begin{eqnarray*}
0 \to L_2 \otimes M_2^{\vee}(=\xi+ \varphi^*\Oc_{\FF_0}(-3,-t)) \to L_2 \otimes \cF_4^{\vee} \to {\cO}_{X} \to 0
\end{eqnarray*} 
\begin{eqnarray*}
0 \to  {\cO}_{X} \to M_2 \otimes \cF_4^{\vee} \to M_2 \otimes L_2^{\vee}(=-\xi + \varphi^*\Oc_{\FF_0}(3,t)) \to 0. 
\end{eqnarray*}  Observing that the above exact sequences are identical to \eqref{extension1dualL1} and \eqref{extension1bdualM2}, one can conclude exactly as in case $(1)$. 
\end{proof}

\vspace{2mm}

%
%

\section{Higher-rank sporadic Ulrich bundles on $3$-fold scrolls over $\FF_0$} 
\label{Ulrich higher rk  vb}

In this section we will construct higher-rank, slope-stable, Ulrich vector bundles using both {\em iterative extensions}, by means of the two {\em sporadic} Ulrich line bundles 
\begin{eqnarray}\label{eq:Mi}
M_1 =2\xi+\varphi^*\Oc_{\FF_0}(-1,-t-1) \;\; \mbox{and its Ulrich dual} \;\;  M_2 =\varphi^*\Oc_{\FF_0}(2,3t-1),
\end{eqnarray}as in  {\bf Theorem A}-(a-ii), and {\em deformations} of such vector-bundle extensions in suitable modular families, generalizing the strategy used in \S\,\ref{S:sporadic}  to construct {\em sporadic} rank-$2$ Ulrich bundles. 

Recall that, from \S\,\ref{S:sporadic},  we have 
\begin{eqnarray}\label{eq:dimextLi}
\dim ({\rm Ext}^1 (M_2, M_1)) &=&  h^1(M_1 - M_2) = 6t-3 \geqslant 3.
 \end{eqnarray}In order to perform {\em recursive constructions}, to ease notation we set once and for all $\cG_1 := M_1$. From \eqref{eq:dimextLi}, the general element of ${\rm Ext}^1 (M_2, \cG_1) = {\rm Ext}^1 (M_2, M_1)$ is a non-splitting extension 
\begin{equation}\label{eq:1r1}
0 \to \cG_1=M_1 \to \cG_2 \to M_2 \to 0,
\end{equation}where $\cG_2:= \cF$, as in the proof of Theorem \ref{prop:rk 2 simple Ulrich vctB e=0;I}, is a rank-$2$ simple vector bundle on $X$, which is Ulrich w.r.t. $\xi$ and with  
\begin{eqnarray*}
c_1(\cG_2) = c_1(M_1) + c_1(M_2) = 2 \xi + \varphi^*\Oc_{\FF_0}(1,2t-2), \; {\rm and}\\ c_2(\cG_2)= c_1(M_1) \cdot c_1(M_2) = \xi \cdot \varphi^*\Oc_{\FF_0}(4,6t-2)- (5t+1)F,
\end{eqnarray*}as in \eqref{eq:chernM}. With a small abuse of notation, we will identify 
extension \eqref{eq:1r1} with the corresponding rank-$2$ vector bundle $\cG_2$, therefore we will state that $[\cG_2] \in {\rm Ext}^1 (M_2, \cG_1)$ is a {\em general element} of this extension space.

If, in the next step,  we considered further extensions ${\rm Ext}^1 (M_2, \cG_2)$,  it is easy to see that the dimension of such an extension space drops by one with respect to that of ${\rm Ext}^1 (M_2, \cG_1)$. Therefore, proceeding in this way,  after finitely many steps we would have ${\rm Ext}^1 (M_2, \cG_r) = \{0\}$, i.e. $\cG_{r+1} = M_2 \oplus \cG_r$, for any $r \geqslant  r_0$, for some positive integer $r_0$. 
To avoid this fact, similarly as in \cite[\S\;4]{cfk1}, we proceed by taking {\em alternating sporadic extensions}, namely 
 \[0 \to \cG_2 \to \cG_3 \to M_1 \to 0,\;\; 0 \to \cG_3 \to \cG_4 \to M_2 \to 0,\; \ldots , 
  \]
  and so on, that is, defining
  \begin{equation} \label{eq:jr}
    \epsilon_r: =
    \begin{cases}
      1, & \mbox{if $r$ is odd}, \\
      2, & \mbox{if $r$ is even},
    \end{cases}
  \end{equation}
  we take successive $[\cG_{r}] \in \Ext^1(M_{\epsilon_{r}},\cG_{r-1})$, for all $r \geqslant2$, defined by:
  \begin{equation}\label{eq:1}
0 \to \cG_{r-1} \to \cG_{r} \to M_{\epsilon_{r}} \to 0.
 \end{equation}

The fact that we always get  {\em non--trivial} such extensions, for any $r \geqslant 2$, will be proved in Corollary \ref{cor:Corollario al Lemma 4.2} below. In any case all vector bundles $\cG_{r}$, recursively defined as in \eqref{eq:1}, are of rank $r$ and Ulrich w.r.t. $\xi$, since extensions of Ulrich bundles w.r.t. $\xi$ are again Ulrich w.r.t. $\xi$. From the fact that any $\cG_r$ is recursively defined, Chern classes of $\cG_r$, for any $r \geqslant 2$, are obtained as linear combination, with coefficients depending on $r$, of $c_1(M_i)$ or $c_1(M_i) \cdot c_1(M_j)$, for $1 \leqslant i,j \leqslant 2$. Precisely, Chern classes of $\cG_r$  are: 
\begin{equation} \label{eq:c1rcaso0}
    c_1(\cG_r): =
    \begin{cases} 
      (r +1)\xi + \varphi^*\Oc_{\FF_0}(0, -2t) + \varphi^*\Oc_{\FF_0}\left(\frac{(r-3)}{2},r(t-1) \right), & \mbox{if $r$ is odd}, \\ 
      r \xi + \varphi^*\Oc_{\FF_0}\left(\frac{r}{2}, r(t-1)\right), & \mbox{if $r$ is even},  
    \end{cases}
  \end{equation}
     \begin{eqnarray*}
c_2(\cG_r) =
    \begin{cases} 
       \xi \cdot  \varphi^*\Oc_{\FF_0}\left(2r^2-2, (2t-1)r^2-2t+1 \right) -\frac{(r-1)(2rt+r+14t-3)}{2} F, & \mbox{if $r\geqslant 3$ is odd}, \\
       \xi \cdot \varphi^*\Oc_{\FF_0}\left(2r^2-2r, r(2rt-r-t+1) \right)-\frac{r(2rt+r+t-1)}{2} F, & \mbox{if $r$ is even},  
    \end{cases}
      \end{eqnarray*}
     \begin{eqnarray*}
c_3(\cG_r) =
    \begin{cases} 
       4r^3t-2r^3-8r^2t+4r^2-4rt+2r+8t-4,   & \mbox{if $r\geqslant 3$ is odd}, \\
       4r^3t-2r^3-10r^2t+6r^2+4rt-4r, & \mbox{if $r\geqslant 4$ is even}. 
    \end{cases}
    \end{eqnarray*}For any $r \geqslant 1$, from  \eqref{slope}, the slope of $\cG_r$ w.r.t. $\xi$ is $\mu(\cG_r) = 13t - 3$. Moreover, from Theorem \ref{thm:stab}-(a), any such bundle $\cG_r$ is strictly semistable and slope-semistable, being obtained by extensions of Ulrich bundles.

\begin{rem}\label{rem:differentmodsp} {\normalfont We want to stress {\em non-sporadic/non-mixed} extensions studied in \cite{fa-fl2}, namely extensions using line bundle pair $(L_1, L_2)$ as in {\bf Theorem A}-(a-i), existing for any pair $(b,k)$ satisfying \eqref{(iii)} and \eqref{eq:rem:assAB} when $e=0$, have different Chern classes $c_2$ and $c_3$. Thus, even when we restrict 
	to the cases $(b,k) = (2t, 3t)$, with $t \geqslant 1$ any integer, moduli spaces 
	determined by (deformations of) bundles $\cG_r$ as in \eqref{eq:1} are different moduli spaces from those constructed in \cite{fa-fl2} when $(b,k) = (2t, 3t)$, $t \geqslant 1$. }
\end{rem}


Among other things, the next result will allow us to prove the aforementioned claim that,  from \eqref{eq:1}, we always get non-trivial extensions (cf.\,Corollary \ref{cor:Corollario al Lemma 4.2}). This fact, together with what proved in Lemma \ref{lem:new} below, will also imply the existence of simple, 
so indecomposable, Ulrich vector bundles w.r.t. $\xi$ for any rank $r \geqslant 2$ (cf. Corollary \ref{cor:new} below).

\begin{lemma} \label{lemma:1} Let  $M$ denote any of the two line bundles 
$M_1$ and $M_2$ as in \eqref{eq:Mi}. Then, for all integers $r \geqslant1$, we have
    \begin{itemize}
    \item[(i)] $h^2(\cG_r \otimes M^{\vee})= h^3(\cG_r \otimes M^{\vee}) = 0$,
      \item[(ii)] $h^2(\cG_r^{\vee} \otimes M)=h^3(\cG_r^{\vee} \otimes M) = 0 $,
      \item[(iii)] $h^1(\cG_r \otimes M_{\epsilon_{r+1}}^{\vee})\geqslant{\min}\{6t-3,\;2t+1\} \geqslant3$.
      \end{itemize}
  \end{lemma}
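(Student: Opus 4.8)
The plan is to induct on $r$, using the recursive extensions \eqref{eq:1} together with the defining sequences \eqref{eq:al-be}, \eqref{eq:al-be3} of $\mathcal E$ to reduce all the required vanishings to cohomology computations of line bundles on $\mathbb F_0$, hence on $\mathbb P^1$ via the projection $\pi_0$. The base case $r=1$ (i.e. $\mathcal G_1 = M_1$) amounts to computing $h^i(M_1 \otimes M^{\vee})$ and $h^i(M^{\vee} \otimes M_1) = h^i(M_1^{\vee} \otimes M)$ for $M \in \{M_1, M_2\}$ and $i = 2,3$; since $M_1 - M_1 = \mathcal O_X$ and $M_1 - M_2$, $M_2 - M_1$ are explicit twists of $\xi$ by pullbacks from $\mathbb F_0$, these follow by pushing forward along $\varphi$ (using $R^j\varphi_* \mathcal O_{\mathbb P(\mathcal E)}(m)$ for $m = 0, \pm 2$, which are computed from $\mathcal E$ and its dual via \eqref{eq:al-be}) exactly as in the computations \eqref{eq:calcoliutili1}, \eqref{eq:calcoliutili2} and those in the proof of Theorem~\ref{prop:rk 2 simple Ulrich vctB e=0;II}. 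For part (iii) with $r=1$ one has $h^1(\mathcal G_1 \otimes M_{\epsilon_2}^{\vee}) = h^1(M_1 - M_2) = 6t-3 \geqslant 3$ by \eqref{eq:dimextLi}.

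\textbf{Inductive step.} Assume (i), (ii), (iii) hold for $\mathcal G_{r-1}$. Tensor the extension \eqref{eq:1}, namely $0 \to \mathcal G_{r-1} \to \mathcal G_r \to M_{\epsilon_r} \to 0$, by $M^{\vee}$ to get
\[
0 \to \mathcal G_{r-1}\otimes M^{\vee} \to \mathcal G_r \otimes M^{\vee} \to M_{\epsilon_r}\otimes M^{\vee} \to 0.
\]
For (i), from the long exact sequence it suffices that $h^j(\mathcal G_{r-1}\otimes M^{\vee}) = 0$ (the inductive hypothesis) and $h^j(M_{\epsilon_r}\otimes M^{\vee}) = 0$ for $j=2,3$; the latter is the base-case computation already carried out, for both choices of $\epsilon_r \in \{1,2\}$ and both $M \in \{M_1,M_2\}$. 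For (ii), dualize \eqref{eq:1} to obtain $0 \to M_{\epsilon_r}^{\vee} \to \mathcal G_r^{\vee} \to \mathcal G_{r-1}^{\vee} \to 0$, tensor by $M$, and argue identically using the inductive hypothesis on $\mathcal G_{r-1}^{\vee}\otimes M$ and the base-case vanishings for $M_{\epsilon_r}^{\vee}\otimes M$. For (iii), tensor \eqref{eq:1} by $M_{\epsilon_{r+1}}^{\vee}$: since $\epsilon_{r+1} \neq \epsilon_r$ (consecutive indices alternate), the quotient term is $M_{\epsilon_r} - M_{\epsilon_{r+1}} = M_1 - M_2$ or $M_2 - M_1$, with $h^0$ equal to $6t-3$ or $2t+1$ respectively and $h^1 = 0$ in both cases (again a base-case line-bundle computation on $\mathbb P^1$). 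Meanwhile the sub-term contributes $h^1(\mathcal G_{r-1}\otimes M_{\epsilon_{r+1}}^{\vee})$; but $\epsilon_{r+1} = \epsilon_{r-1}$, so by the inductive hypothesis (iii) applied to $\mathcal G_{r-1}$ this is $\geqslant \min\{6t-3, 2t+1\}$. The long exact sequence then gives $h^1(\mathcal G_r \otimes M_{\epsilon_{r+1}}^{\vee}) \geqslant h^1(\mathcal G_{r-1}\otimes M_{\epsilon_{r+1}}^{\vee}) \geqslant \min\{6t-3,2t+1\} \geqslant 3$, since $t \geqslant 1$.

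\textbf{Main obstacle.} The only real content is the explicit line-bundle cohomology: one must verify that for \emph{both} twists $M_1 - M_2$ and $M_2 - M_1$, and for $M^{\vee}\otimes M$ with $M = M_1, M_2$, the groups $H^2$ and $H^3$ on $X$ vanish. Writing each such bundle as $m\xi + \varphi^*\mathcal O_{\mathbb F_0}(a,b)$ and applying $R\varphi_*$ reduces this to $H^i(\mathbb F_0, \operatorname{Sym}^m(\mathcal E)\otimes\mathcal O_{\mathbb F_0}(a,b))$ (for $m \geqslant 0$) or to Serre-dual statements (for $m < 0$), which in turn, via \eqref{eq:al-be}, \eqref{eq:al-be3} and the projection $\pi_0$ to $\mathbb P^1$, become vanishings of $H^i(\mathbb P^1, \mathcal O_{\mathbb P^1}(\ast))$; these are routine but must be checked case by case, keeping track of the constraint $b = 2t$, $k = 3t$ from \eqref{eq:notation0}. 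I expect no genuine difficulty here beyond bookkeeping, as all the relevant computations are of the same shape as \eqref{eq:calcoliutili1}--\eqref{eq:calcoliutili2}, \eqref{eq:aEbTensor(-3,-t)} and \eqref{eq:aEbTensor(0,-3t)} already appearing in the excerpt.
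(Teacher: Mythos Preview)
Your treatment of parts (i) and (ii) is correct and matches the paper's argument exactly: induct on $r$, tensor \eqref{eq:1} (resp.\ its dual) by $M^{\vee}$ (resp.\ $M$), and use that the line-bundle pieces $M_{\epsilon_r}\otimes M^{\vee}$ are always one of $\mathcal O_X$, $M_1-M_2$, $M_2-M_1$, whose $H^2$ and $H^3$ vanish by the computations \eqref{eq:calcoliutili1}--\eqref{eq:calcoliutili2}.

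Part (iii), however, contains a genuine gap. First, you have the cohomology of the quotient term swapped: by \eqref{eq:calcoliutili1}--\eqref{eq:calcoliutili2} one has $h^0(M_1-M_2)=h^0(M_2-M_1)=0$ and $h^1(M_1-M_2)=6t-3$, $h^1(M_2-M_1)=2t+1$, not the other way around. Second, and more seriously, your appeal to the inductive hypothesis is incorrect: statement (iii) for $\cG_{r-1}$ bounds $h^1(\cG_{r-1}\otimes M_{\epsilon_r}^{\vee})$, not $h^1(\cG_{r-1}\otimes M_{\epsilon_{r+1}}^{\vee})$. The observation $\epsilon_{r+1}=\epsilon_{r-1}$ does not help, since the subscript appearing in the hypothesis for $\cG_{r-1}$ is $\epsilon_{(r-1)+1}=\epsilon_r$. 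So your chain of inequalities does not close up.

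The paper's proof of (iii) avoids induction on (iii) altogether. After tensoring \eqref{eq:1} by $M_{\epsilon_{r+1}}^{\vee}$, it uses part (i) (already established for $\cG_{r-1}$) to get $h^2(\cG_{r-1}\otimes M_{\epsilon_{r+1}}^{\vee})=0$, whence the long exact sequence gives a \emph{surjection}
\[
H^1(\cG_r\otimes M_{\epsilon_{r+1}}^{\vee}) \twoheadrightarrow H^1(M_{\epsilon_r}\otimes M_{\epsilon_{r+1}}^{\vee}),
\]
and the right-hand side equals $6t-3$ or $2t+1$ according to parity. This immediately yields $h^1(\cG_r\otimes M_{\epsilon_{r+1}}^{\vee})\geqslant\min\{6t-3,2t+1\}\geqslant 3$, with no need to control $h^1$ of the sub-term.
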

  
	\begin{proof} For $r=1$, by definition,  we have $\cG_1 = M_1$; therefore $\cG_1 \otimes M^{\vee}$ and 
	$\cG_1^{\vee} \otimes M$  are either equal to $\mathcal O_{X}$, if $M=M_1$, or equal to $M_1 - M_2$ and $M_2 - M_1$, respectively, if $M=M_2$. Therefore (i) and (ii) hold true by computations as in \S\,\ref{S:sporadic}. As for (iii), by \eqref{eq:jr} we have that $M_{\epsilon_{2}} = M_2$ thus 
	$h^1(\cG_1 \otimes M_{2}^{\vee}) = h^1(M_1 - M_2) = 6t - 3$, as is \S\;\ref{S:sporadic}, the latter being always greater than or equal to ${\min}\{6t-3,\;2t+1\} \geqslant 3$. 
	
	Therefore, we will assume $r \geqslant2$ and proceed by induction. 
	
	Regarding (i), since it holds for $r=1$, assuming it holds for $r-1$, then 
	by tensoring \eqref{eq:1} with $M^{\vee}$ we get that
	\begin{eqnarray*}
     h^j(\cG_{r} \otimes M^{\vee}) =0, \;\; j=2,3,
    \end{eqnarray*}
	because $h^j(\cG_{r-1} \otimes M^{\vee}) = 0$, for $j=2,3$, by inductive hypothesis whereas 
		$h^j(M_{\epsilon_{r}} \otimes M^{\vee}) = 0 $, for $j=2,3$, since $M_{\epsilon_{r}} \otimes M^{\vee}$ 
		 is either $\mathcal O_{X}$, or $M_2-M_1$, or $M_1-M_2$. 
		 
		 A similar reasoning, tensoring the dual of \eqref{eq:1} by $M$, proves (ii).

    To prove (iii), tensor \eqref{eq:1} by $M_{\epsilon_{r+1}}^{\vee}$ and use that $h^2(\cG_{r-1}\otimes M_{\epsilon_{r+1}}^{\vee})=0$ by (i). Thus 
		we have the surjection
$$H^1(\cG_r \otimes M_{\epsilon_{r+1}}^{\vee}) \twoheadrightarrow H^1(M_{\epsilon_{r}} \otimes M_{\epsilon_{r+1}}^{\vee}),$$which implies that 
$h^1(\cG_r \otimes M_{\epsilon_{r+1}}^{\vee}) \geqslant h^1(M_{\epsilon_{r}} \otimes M_{\epsilon_{r+1}}^{\vee})$. According to the 
parity of $r$, we have that $M_{\epsilon_{r}} \otimes M_{\epsilon_{r+1}}^{\vee}$ equals 
either $M_1 - M_2$ or $M_2 - M_1$. From computations as in \S\;\ref{Ulrich rk 2 vb}, 
$h^1(M_1-M_2) = 6t-3$ whereas $h^1(M_2-M_1) = 2t+1$. Notice that 
\[
{\rm min} \{6t-3,\;2t+1\} = \begin{cases}
      6t-3 = 2t+1= 3, & \mbox{if $t=1$}, \\
			2t+1 \geqslant5, & \mbox{if $t\geqslant2$}.
    \end{cases}
		\]Therefore one concludes.
		\end{proof}

  \begin{cor}\label{cor:Corollario al Lemma 4.2}  For any integer $r \geqslant 1$ there exist on $X$ rank-$r$ vector bundles $\cG_r$, which are Ulrich w.r.t. $\xi$, with Chern classes as in \eqref{eq:c1rcaso0}, of slope  w.r.t. $\xi$ given by $\mu(\cG_r) = 13t - 3$ and which arise as non-trivial extensions as in \eqref{eq:1} if $r \geqslant 2$. 
	\end{cor}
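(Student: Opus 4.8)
The plan is to prove Corollary \ref{cor:Corollario al Lemma 4.2} by induction on $r$, using Lemma \ref{lemma:1} as the main engine, exactly in the spirit of the construction preceding the statement. First I would recall that the case $r=1$ is immediate: $\cG_1 := M_1$ is an Ulrich line bundle w.r.t. $\xi$ by {\bf Theorem A}-(a-ii), its slope is $\mu(M_1) = 13t-3$ by \eqref{slope} (equivalently \eqref{slopeMi}), and its Chern classes agree with the $r=1$ specialization of \eqref{eq:c1rcaso0}, which can be checked directly from \eqref{eq:Mi}. The case $r=2$ is $\cG_2 = \cF$ as in the proof of Theorem \ref{prop:rk 2 simple Ulrich vctB e=0;I}: the non-triviality of \eqref{eq:1r1} follows from $\dim({\rm Ext}^1(M_2,M_1)) = 6t-3 \geqslant 3 > 0$ recorded in \eqref{eq:dimextLi}, and the listed Chern classes and slope for $\cG_2$ have already been computed in \eqref{chernF1}, \eqref{slopeMi}.

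Next, for the inductive step I would assume that $\cG_{r-1}$ exists as a rank-$(r-1)$ Ulrich bundle w.r.t. $\xi$ with the stated invariants, and produce $\cG_r$ as in \eqref{eq:1}. The key point is non-triviality of the extension: by Lemma \ref{lemma:1}-(iii) applied with index $r-1$ in place of $r$, one has $h^1(\cG_{r-1} \otimes M_{\epsilon_r}^{\vee}) = \dim {\rm Ext}^1(M_{\epsilon_r}, \cG_{r-1}) \geqslant \min\{6t-3,\,2t+1\} \geqslant 3 > 0$, so a general element of this extension space is a genuinely non-split extension, defining a rank-$r$ bundle $\cG_r$. Then $\cG_r$ is Ulrich w.r.t. $\xi$ because it sits in an exact sequence whose kernel $\cG_{r-1}$ and cokernel $M_{\epsilon_r}$ are both Ulrich w.r.t. $\xi$, and the class of Ulrich bundles is closed under extensions (the defining cohomology vanishings in Definition \ref{def:Ulrich} pass through the long exact sequence of \eqref{eq:1} twisted by $-j\xi$). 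The slope is $\mu(\cG_r) = 13t-3$ again by \eqref{slope}, since $\deg(X) + g - 1 = 9t + (4t-2) - 1 = 13t-3$ is independent of the rank.

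For the Chern classes, the additivity $c(\cG_r) = c(\cG_{r-1}) \cdot c(M_{\epsilon_r})$ from \eqref{eq:1} reduces everything to a bookkeeping computation: writing $c_1(M_{\epsilon_r})$ alternately as $c_1(M_1) = 2\xi + \varphi^*\Oc_{\FF_0}(-1,-t-1)$ or $c_1(M_2) = \varphi^*\Oc_{\FF_0}(2,3t-1)$ and using the intersection relations on $\FF_0$ ($C_0^2 = 0$, $C_0 f = 1$, $f^2 = 0$) pulled back to $X$ together with the tautological relation determining $\xi^3$, one verifies the closed formulas in \eqref{eq:c1rcaso0} by induction on $r$ with the parity alternation built in. I would present this as: the formulas hold for $r=1,2$ by the explicit computations above, and the inductive step is the purely formal check that adding $c_1(M_{\epsilon_{r+1}})$ to the rank-$r$ formula and updating the lower Chern classes accordingly reproduces the rank-$(r+1)$ formula — the parity switch $\epsilon_r \leftrightarrow \epsilon_{r+1}$ is exactly what makes the two cases of \eqref{eq:c1rcaso0} transform into one another.

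I expect the main obstacle to be purely computational rather than conceptual: organizing the alternating recursion so that the two parity branches of the $c_1, c_2, c_3$ formulas in \eqref{eq:c1rcaso0} are shown to be consistent under one more step of \eqref{eq:1} without error, especially the $F$-coefficients in $c_2(\cG_r)$ and the cubic-in-$r$ expressions for $c_3(\cG_r)$. Everything else — non-triviality, Ulrichness, the slope — is an immediate consequence of Lemma \ref{lemma:1}, the extension-closedness of Ulrich bundles, and formula \eqref{slope}, so the proof is short once the bookkeeping is trusted; I would simply state that the Chern class formulas follow by induction from Whitney's formula applied to \eqref{eq:1}, referring back to the base cases \eqref{chernF1} and \eqref{eq:Mi}.
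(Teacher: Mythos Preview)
Your proposal is correct and follows essentially the same approach as the paper: the base case $r=1$ via $\cG_1 = M_1$ and {\bf Theorem A}-(a-ii), non-triviality of the extension space for $r\geqslant 2$ via Lemma~\ref{lemma:1}-(iii) with index shifted to $r-1$, Ulrichness by closure under extensions, and slope by \eqref{slope}. The paper's proof is terser about the Chern classes (treating \eqref{eq:c1rcaso0} as already established in the discussion preceding the statement), whereas you spell out the Whitney-formula bookkeeping, but this is a difference of exposition, not of strategy.
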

	\begin{proof} For $r=1$, we have $\cG_1 = M_1$ and the statement holds true from {\bf Theorem A}-(a-ii) and computations in \S\;\ref{Ulrich rk 2 vb}. 
	
	 For any $r \geqslant2$, notice that $\Ext^1(M_{\epsilon_{r}}, \cG_{r-1}) \cong H^1(\cG_{r-1} \otimes M_{\epsilon_{r}}^{\vee})$. Therefore, from Lemma \ref{lemma:1}-(iii) there exist non--trivial  extensions as in \eqref{eq:1}, which therefore give rise to bundles $\cG_r$ which are  Ulrich with respect to $\xi$,  whose Chern classes are  exactly as those computed in \eqref{eq:c1rcaso0}. Since they are Ulrich bundles, the statement about their slope w.r.t. $\xi$ directly follows from \eqref{slope}.    
	\end{proof}
 
 From Corollary \ref{cor:Corollario al Lemma 4.2}, at any step we can always pick {\em non--trivial} extensions of the form \eqref{eq:1} and 
we will henceforth do so. Next result uses similar strategies as in \cite[Lemma\,4.3]{fa-fl2}.

\begin{lemma} \label{lemma:2}  Let $r \geqslant 1$  be an integer. Then we have 
    \begin{itemize}
    \item[(i)] $h^1(\cG_{r+1} \otimes M_{\epsilon_{r+1}}^{\vee})=h^1(\cG_r \otimes M_{\epsilon_{r+1}}^{\vee})-1$,
    \item[(ii)] $h^1(\cG_r \otimes M_{\epsilon_{r+1}}^{\vee})= 
		\begin{cases}
      \frac{(r+1)}{2} h^1(M_1-M_2) - \frac{(r-1)}{2} = \frac{r+1}{2}(6t-3) - \frac{(r-1)}{2}, & \mbox{if $r$ is odd}, \\
			\frac{r}{2} h^1(M_2-M_1) - \frac{(r-2)}{2} = \frac{r}{2}(2t+1) - \frac{(r-2)}{2}, & \mbox{if $r$ is even}.
    \end{cases}$
		\item[(iii)] $h^2(\cG_r \otimes \cG_r^{\vee}) = h^3(\cG_r \otimes \cG_r^{\vee})=0$,
		
\item[(iv)] $\chi(\cG_r \otimes M_{\epsilon_{r+1}}^{\vee})= 
\begin{cases}
      \frac{(r+1)}{2} (1- h^1(M_1-M_2)) -1  =  \frac{(r+1)}{2} (4 - 6t) -1, & \mbox{if $r$ is odd}, \\
			\frac{r}{2} (1- h^1(M_2-M_1)) = - r t , & \mbox{if $r$ is even}.
    \end{cases}$

\item[(v)] $\chi(M_{\epsilon_{r}} \otimes \cG_r^{\vee})= 
\begin{cases}
      \frac{(r-1)}{2} (1- h^1(M_1-M_2)) + 1  = \frac{(r-1)}{2} (4 - 6 t) + 1, & \mbox{if $r$ is odd}, \\
			\frac{r}{2} (1- h^1(M_2-M_1)) = - r t, & \mbox{if $r$ is even}.
    \end{cases}$
\item[(vi)]  $\chi(\cG_r \otimes \cG_r^{\vee})= 
\begin{cases}
  \scriptstyle    \frac{(r^2 -1)}{4} (2-h^1(M_1-M_2)-h^1(M_2-M_1)) + 1  = \frac{(r^2 -1)}{4}(4 - 8t) + 1, & \mbox{if $r$ is odd}, \\
		\scriptstyle	\frac{r^2}{4} (2- h^1(M_1-M_2)-h^1(M_2-M_1)) = \frac{r^2}{4} (4- 8t) , & \mbox{if $r$ is even}.
    \end{cases}$
\end{itemize}
\end{lemma}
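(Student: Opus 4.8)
The plan is to establish the six assertions essentially in the order listed, since each one feeds into the next, and to run everything by induction on $r$ using the defining sequences \eqref{eq:1}. First I would dispose of the base case $r=1$, where $\cG_1 = M_1$: here (i) and (ii) reduce to the identity $h^1(\cG_1 \otimes M_{\epsilon_2}^{\vee}) = h^1(M_1 - M_2) = 6t-3$ (consistent with the odd-$r$ formula), item (iii) follows from the computations in \S\,\ref{S:sporadic} already reproduced in \eqref{eq:calcoliutili1}--\eqref{eq:calcoliutili2} applied to the line bundle $M_1$, and items (iv)--(vi) are Euler-characteristic bookkeeping which for $r=1$ involves only $\cO_X$, $M_1-M_2$ and $M_2-M_1$ whose cohomology is known. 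The key structural inputs are: tensoring \eqref{eq:1} with $M_{\epsilon_{r+1}}^{\vee}$, tensoring \eqref{eq:1} (and its dual) with $\cG_r^{\vee}$ (resp.\ with $M_{\epsilon_r}$), and the vanishings $h^2 = h^3 = 0$ of Lemma \ref{lemma:1}-(i),(ii). Throughout one uses that $M_{\epsilon_r}\otimes M_{\epsilon_{r+1}}^{\vee}$ is always one of $\cO_X$, $M_1-M_2$, $M_2-M_1$, so only the three numbers $h^1(\cO_X)=0$ undefined... rather $h^0(\cO_X) = 1$, $h^1(M_1-M_2)=6t-3$, $h^1(M_2-M_1)=2t+1$ ever enter.

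For (i): tensor \eqref{eq:1} by $M_{\epsilon_{r+1}}^{\vee}$ and take the long exact cohomology sequence. Since $M_{\epsilon_r}\otimes M_{\epsilon_{r+1}}^{\vee}$ changes parity at each step relative to the previous one, one of the two middle terms $h^0$ or the connecting map is controlled; more precisely, the extension class of \eqref{eq:1} sits in $\Ext^1(M_{\epsilon_r},\cG_{r-1}) = H^1(\cG_{r-1}\otimes M_{\epsilon_r}^{\vee})$ and, since by construction the extension is non-split (Corollary \ref{cor:Corollario al Lemma 4.2}) and spans a line on which the relevant connecting homomorphism $H^0(M_{\epsilon_{r+1}}\otimes M_{\epsilon_{r+1}}^{\vee}) = H^0(\cO_X) \to H^1(\cG_r \otimes M_{\epsilon_{r+1}}^{\vee})$... wait, one needs to be careful about which sequence. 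The clean argument: tensor \eqref{eq:1} (the sequence $0\to\cG_{r-1}\to\cG_r\to M_{\epsilon_r}\to 0$ with index shifted so $r\mapsto r+1$, i.e.\ $0 \to \cG_r \to \cG_{r+1} \to M_{\epsilon_{r+1}} \to 0$) by $M_{\epsilon_{r+1}}^{\vee}$; then $M_{\epsilon_{r+1}}\otimes M_{\epsilon_{r+1}}^{\vee} = \cO_X$, and using $h^0(\cO_X) = 1$, $h^1(\cO_X) = 0$, together with $h^0(\cG_r\otimes M_{\epsilon_{r+1}}^{\vee}) = 0$ (which one checks: it is an extension of things with no sections, or argue via Ulrichness/stability that $\cG_r$ has no sub-line-bundle equal to $M_{\epsilon_{r+1}}$ giving a section of the twist — actually this needs $h^0(\cG_r \otimes M_{\epsilon_{r+1}}^{\vee}) = 0$, which should follow inductively), the connecting map $H^0(\cO_X)\to H^1(\cG_r\otimes M_{\epsilon_{r+1}}^{\vee})$ is injective because the extension $[\cG_{r+1}]$ is non-split, yielding exactly the drop by one in (i). Item (ii) is then immediate by telescoping (i) down to the base case, distinguishing parity; (iv) follows from (ii) plus the vanishing $h^2 = h^3 = 0$ of Lemma \ref{lemma:1}-(i) and the identification $\chi = h^0 - h^1$ with $h^0$ as just discussed.

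For (iii): this is the exact analogue of the rank-$2$ computation in the proof of Theorem \ref{prop:rk 2 simple Ulrich vctB e=0;I}. Tensor \eqref{eq:1} by $\cG_r^{\vee}$ to get $0\to \cG_{r-1}\otimes\cG_r^{\vee}\to \cG_r\otimes\cG_r^{\vee}\to M_{\epsilon_r}\otimes\cG_r^{\vee}\to 0$; the term $M_{\epsilon_r}\otimes\cG_r^{\vee}$ has $h^2=h^3=0$ by Lemma \ref{lemma:1}-(ii), and $\cG_{r-1}\otimes\cG_r^{\vee}$ is handled by tensoring the dual of \eqref{eq:1} (at level $r$) by $\cG_{r-1}$ and invoking the inductive hypothesis $h^{2,3}(\cG_{r-1}\otimes\cG_{r-1}^{\vee}) = 0$ together with $h^{2,3}(\cG_{r-1}\otimes M_{\epsilon_r}^{\vee}) = 0$ from Lemma \ref{lemma:1}-(i). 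For (v), tensor the dual of \eqref{eq:1} by $M_{\epsilon_r}$ and chase Euler characteristics, reducing to already-known line-bundle data; for (vi), combine (iv) and (v) additively along the filtration: $\chi(\cG_r\otimes\cG_r^{\vee}) = \chi(\cG_{r-1}\otimes\cG_r^{\vee}) + \chi(M_{\epsilon_r}\otimes\cG_r^{\vee})$, then peel $\cG_{r-1}\otimes\cG_r^{\vee}$ using the dual sequence against $\cG_{r-1}$, and iterate, so that $\chi(\cG_r\otimes\cG_r^{\vee})$ becomes a double sum over pairs $(\epsilon_i,\epsilon_j)$ whose terms collapse to the stated closed form after separating the $\binom{r}{2}$-many off-diagonal contributions (each worth $2 - h^1(M_1-M_2) - h^1(M_2-M_1) = 4-8t$, split as $(1-h^1(M_1-M_2))$ plus $(1-h^1(M_2-M_1))$ according to parity) from the $r$ diagonal contributions (each worth $\chi(\cO_X) = 1$). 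The arithmetic that $\frac{(r-1)(r+1)}{4}$ many such off-diagonal pairs occur when $r$ is odd, versus $\frac{r^2}{4}$ when $r$ is even, is the combinatorial heart of the final formula.

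The main obstacle I anticipate is item (i) — specifically, pinning down that the relevant connecting homomorphism is injective, which amounts to controlling $h^0(\cG_r\otimes M_{\epsilon_{r+1}}^{\vee})$ and ensuring the non-split extension class genuinely "uses up" one dimension in the long exact sequence in the expected spot. Once (i) is clean, (ii)--(vi) are a disciplined but routine cascade of long exact sequences and Euler-characteristic additivity, with the only subtlety being careful parity bookkeeping and the combinatorics of the double sum in (vi). A secondary point requiring care is keeping track of which of $M_1-M_2$ or $M_2-M_1$ appears at each parity, since their $h^1$ values $6t-3$ and $2t+1$ differ (unless $t=1$), and mislabeling would corrupt the closed forms in (ii), (iv), (v).
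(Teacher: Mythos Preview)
Your overall strategy---induction on $r$ via the defining sequences \eqref{eq:1} and their duals, using Lemma~\ref{lemma:1} for the $h^2,h^3$ vanishings---is exactly what the paper does. Two points are worth correcting or sharpening.

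First, your anticipated ``main obstacle'' in (i) is not one: you do \emph{not} need $h^0(\cG_r\otimes M_{\epsilon_{r+1}}^{\vee})=0$. Tensoring $0\to\cG_r\to\cG_{r+1}\to M_{\epsilon_{r+1}}\to 0$ by $M_{\epsilon_{r+1}}^{\vee}$ and using only that the coboundary $H^0(\cO_X)\to H^1(\cG_r\otimes M_{\epsilon_{r+1}}^{\vee})\cong\Ext^1(M_{\epsilon_{r+1}},\cG_r)$ sends $1$ to the (non-zero) extension class already gives the drop by one in $h^1$, regardless of what $h^0$ is. In fact your guess that this $h^0$ vanishes is \emph{false} in general: one has $h^0(\cG_r\otimes M_{\epsilon_{r+1}}^{\vee})=0$ for $r$ odd but $=1$ for $r$ even (the filtration contains $\lfloor r/2\rfloor$ copies of $\cO_X$ among the graded pieces, and the non-split coboundaries kill all but one section when $r$ is even; cf.\ the paper's later computation \eqref{eq:porcavacca}).

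This matters because your route to (iv)---deduce it from (ii) via $\chi=h^0-h^1$---then requires that parity-dependent $h^0$, not the blanket vanishing you conjectured. The paper avoids this entirely by proving (iv), (v), (vi) directly by induction on $r$ using additivity of $\chi$ along \eqref{eq:1} and its dual; no $h^0$ is ever needed. Your double-sum unwinding for (vi) is a perfectly valid alternative and gives the same answer, but it is different from the paper's step-by-step recursion. Finally, (ii) is not literally ``telescoping (i)'': item (i) relates $\cG_{r+1}$ and $\cG_r$ twisted by the \emph{same} $M_{\epsilon_{r+1}}^{\vee}$, whereas the inductive hypothesis for (ii) lives at the opposite parity; the paper bridges this by also tensoring \eqref{eq:1} (with $r\mapsto r+1$) by $M_{\epsilon_{r+2}}^{\vee}$ and then invoking (i) with shifted index---an extra exact sequence beyond (i) alone.
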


\begin{proof}  (i) Consider the exact sequence \eqref{eq:1}, with $r$ replaced by $r+1$. From $\Ext^1(M_{\epsilon_{r+1}},\cG_r)\cong H^1(\cG_r \otimes M_{\epsilon_{r+1}}^{\vee})$ and from the fact that the exact sequence defining $\cG_{r+1}$ is constructed by taking a non--zero vector $[\cG_{r+1}] \in \Ext^1(M_{\epsilon_{r+1}},\cG_r)$, it follows that the coboundary map 
\begin{eqnarray*}
H^0(\mathcal O_{X}) \stackrel{\partial}{\longrightarrow } H^1(\cG_r \otimes M_{\epsilon_{r+1}}^{\vee})
\end{eqnarray*} arising from 
\begin{eqnarray}\label{eq:dag}
0 \to \cG_r \otimes M_{\epsilon_{r+1}}^{\vee} \to \cG_{r+1}\otimes M_{\epsilon_{r+1}}^{\vee} \to  \mathcal O_{X} \to 0,
 \end{eqnarray} is non--zero, so it is injective; thus, (i) follows from the cohomology of \eqref{eq:dag}.

\medskip

\noindent
(ii) Here one uses induction on $r$. For $r=1$, the right hand side of the formula
yields $6t-3$ which is $h^1(\cG_1 \otimes M_2^{\vee})=h^1(M_1-M_2)$,  see 
 \eqref{eq:dimextLi}. When otherwise  $r=2$, the right hand side of the formula  is $2t+1$ which is $ h^1(\cG_2 \otimes M_1^{\vee}) = h^1(M_2 -M_1) = 2t+1$, as seen in  \eqref{eq:dimextLi}, and from the exact sequence 
\begin{eqnarray*}
0 \to \mathcal O_{X} \to \cG_2 \otimes M_1^{\vee} \to M_2 - M_1 \to 0,
\end{eqnarray*}
obtained by \eqref{eq:1} with $r=2$ and tensored with $M_1^{\vee}$, and the fact that 
$h^j(\mathcal O_{X}) = 0$, for $j=1,2$.

Assuming by induction that formula as in (ii) holds true up to some given integer $r \geqslant 2$, one has to show that it holds also for $r+1$. Considering \eqref{eq:1}, with $r$ replaced by $r+1$, and tensoring it by $M_{\epsilon_{r+2}}^{\vee}$ we thus 
obtain
\begin{eqnarray} \label{eq:dagdag}
0 \to \cG_r\otimes M_{\epsilon_{r+2}}^{\vee} \to \cG_{r+1}\otimes M_{\epsilon_{r+2}}^{\vee} \to  M_{\epsilon_{r+1}}\otimes M_{\epsilon_{r+2}}^{\vee} \to 0
 \end{eqnarray}

If $r$ is even, then by definition $M_{\epsilon_{r+2}} = M_2$ whereas $M_{\epsilon_{r+1}} =M_1$. Thus $h^0(M_{\epsilon_{r+1}}\otimes M_{\epsilon_{r+2}}^{\vee})= h^0(M_1 - M_2) = 0$ and  
$h^1(M_{\epsilon_{r+1}}\otimes M_{\epsilon_{r+2}}^{\vee})=h^1(M_1 - M_2) = 6t -3$.  On the other hand, by Lemma \ref{lemma:1}-(i), 
$h^2(\cG_r\otimes M_{\epsilon_{r+2}}^{\vee})=0$. Thus, from \eqref{eq:dagdag}, we get: 
\begin{eqnarray*}
 h^1(\cG_{r+1} \otimes M_{\epsilon_{r+2}}^{\vee})=6t-3 +h^1(\cG_r\otimes M_{\epsilon_{r+2}}^{\vee})=6t-3+
h^1(\cG_r\otimes M_{\epsilon_{r}}^{\vee}), 
 \end{eqnarray*}
 as $r$ and $r+2$ have the same parity. Using (i),  we have $h^1(\cG_r\otimes M_{\epsilon_{r}}^{\vee}) = h^1(\cG_{r-1}\otimes M_{\epsilon_{r}}^{\vee}) -1$ therefore, 
by inductive hypothesis with $r-1$ odd, we have $h^1(\cG_{r-1}\otimes M_{\epsilon_{r}}^{\vee}) = \frac{r}{2} (6t-3) - \frac{(r-2)}{2}$. 
 Summing up, we have 
\begin{eqnarray*}
h^1(\cG_{r+1} \otimes M_{\epsilon_{r+2}}^{\vee}) = (6t-3) + \frac{r}{2} (6t-3) - \frac{(r-2)}{2} -1,
\end{eqnarray*}
which is easily seen to be equal to the right hand side expression in (ii), when $r$ is replaced by $r+1$. 

If  $r$ is odd, the same holds for $r+2$ whereas $r+1$ is even. In this case  $M_{\epsilon_{r+2}}= M_1$, 
$M_{\epsilon_{r+1}} =M_2$ so $h^1(M_{\epsilon_{r+1}}\otimes M_{\epsilon_{r+2}}^{\vee})=h^1(M_2 - M_1) = 2t+1$ and one applies the same procedure 
as in the previous case. 

\medskip

\noindent
(iii)  We again use induction on $r$. For $r=1$, formula (iii) states that $h^j(M_1 -M_1)=h^j(\mathcal O_{X})=0$, for $j=2,3$,  which is  certainly true. 

Assume now that (iii) holds  up to some integer $r \geqslant 1$; we have to prove that it holds also for $r+1$. 
Consider the exact sequence \eqref{eq:1}, where $r$ is replaced by $r+1$, and tensor it by $\cG_{r+1}^{\vee}$. From this we get that, for $j=2,3$, 
\begin{eqnarray} \label{eq:zaniolo}
 h^j( \cG_{r+1} \otimes \cG_{r+1}^{\vee}) \leqslant h^j( \cG_{r} \otimes \cG_{r+1}^{\vee}) + h^j(M_{\epsilon_{r+1}} \otimes \cG_{r+1}^{\vee}) = h^j( \cG_{r} \otimes \cG_{r+1}^{\vee}),
\end{eqnarray} the latter equality follows from $h^j( M_{\epsilon_{r+1}} \otimes \cG_{r+1}^{\vee}) =0$, $j =2,3$, as in 
Lemma \ref{lemma:1}-(ii).  

Consider the dual exact sequence of \eqref{eq:1}, where $r$ is replaced by $r+1$, and tensor it by $\cG_{r}$. Thus, 
Lemma \ref{lemma:1}-(i) yields that, for $j=2,3$, one has 
\begin{eqnarray} \label{eq:pogba}
h^j( \cG_{r} \otimes \cG_{r+1}^{\vee}) \leqslant h^j(\cG_r \otimes M_{\epsilon_{r+1}}^{\vee})+h^j(\cG_{r} \otimes \cG_{r}^{\vee})= h^j(\cG_{r} \otimes \cG_{r}^{\vee}).
\end{eqnarray} Now \eqref{eq:zaniolo}--\eqref{eq:pogba} and the inductive hypothesis yield
$h^j( \cG_{r+1} \otimes \cG_{r+1}^{\vee})=0$, for $j=2,3$, as desired.

\medskip

\noindent
(iv)  For $r=1$,  (iv) reads $\chi(M_1-M_2)= - h^1(M_1-M_2) = 3 - 6t$, which is true since $h^j(M_1-M_2)=0$ for $j=0,2,3$. For $r=2$, (iv) reads $\chi(\cG_2 \otimes M_1^{\vee}) = 1 - h^1(M_2-M_1) =  - 2t$ and this holds true because 
 if we take the exact sequence \eqref{eq:1}, with $r=2$, tensored by $M_1^{\vee}$ then 
\begin{eqnarray*} 
\chi(\cG_2 \otimes M_1^{\vee}) = \chi (\mathcal O_{X}) + \chi (M_2-M_1) = 1 - h^1(M_2-M_1) = 1 - (2t+1),
\end{eqnarray*} 
as $h^j(M_2-M_1)= 0$ for $j=0,2,3$. 

Assume now that the formula holds up to a certain integer $r \geqslant 2$, we have to prove that it also holds for $r+1$.  
From  \eqref{eq:dagdag} we get 
\begin{eqnarray*} 
\chi(\cG_{r+1}\otimes M_{\epsilon_{r+2}}^{\vee}) = \chi(\cG_r\otimes M_{\epsilon_{r+2}}^{\vee}) + \chi(M_{\epsilon_{r+1}}\otimes M_{\epsilon_{r+2}}^{\vee}).
\end{eqnarray*}

If $r$ is even, the same is   true for $r+2$ whereas $r+1$ is odd. Therefore, 
\begin{equation} \label{eq:maz1}
  \chi(\cG_{r+1}\otimes M_{\epsilon_{r+2}}^{\vee}) = \chi(\cG_r\otimes M_{2}^{\vee})+ \chi(M_{1} - M_{2}) = 
	\chi(\cG_r\otimes M_{2}^{\vee})-h^1(M_1-M_2).
  \end{equation} Then  \eqref{eq:dag}, with $r$ replaced by $r-1$, yields
  \begin{equation}\label{eq:maz2}
    \chi(\cG_r\otimes M_{2}^{\vee})=\chi(\cG_{r-1}\otimes M_{2}^{\vee})+\chi(\mathcal O_{X})= \chi(\cG_{r-1}\otimes M_{2}^{\vee})+1.
  \end{equation}
Substituting \eqref{eq:maz2} into \eqref{eq:maz1} and using the inductive hypothesis with $r-1$ odd, we get
  \begin{eqnarray*}
    \chi(\cG_{r+1}\otimes M_{2}^{\vee}) &=& \chi(\cG_{r-1}\otimes M_{2}^{\vee})+1-h^1(M_1-M_2) \\
    & = & \frac{r}{2} (1- h^1(M_1-M_2))-h^1(M_1-M_2) \\
		 & = & \frac{(r+2)}{2} (1- h^1(M_2-M_1)) -1,   
\end{eqnarray*} proving that the formula holds also for $r+1$ odd.

Similar procedure can be used to treat the case when $r$ is odd. In this case, $M_{\epsilon_{r+1}} = M_2$ whereas $M_{\epsilon_{r+2}} = M_1$. Thus, 
from the above computations, 
\begin{eqnarray*}
\chi(\cG_{r+1}\otimes M_1^{\vee}) = \chi(\cG_r\otimes M_{1}^{\vee})+ \chi(M_2 - M_1) = \chi(\cG_r\otimes M_{1}^{\vee})- h^1(M_2 - M_1).
\end{eqnarray*}
As in the previous case, 
$\chi(\cG_r\otimes M_{1}^{\vee}) = 1 + \chi (\cG_{r-1} \otimes M_{1}^{\vee})$ so, applying inductive hypothesis with $r-1$ even, we get 
$\chi(\cG_r\otimes M_{1}^{\vee}) = 1 + \frac{(r-1)}{2} (1 - h^1(M_2-M_1))$. Adding up all these quantities, we get 
\begin{eqnarray*}\chi(\cG_{r+1}\otimes M_{\epsilon_{r+2}}^{\vee})= \chi(\cG_{r+1}\otimes M_1^{\vee}) = \frac{r+1}{2} (1 - h^1(M_2 -M_1)),
\end{eqnarray*}
so formula (iv) 
holds true also for $r+1$ even.

\medskip

\noindent
(v) For $r=1$, (v) reads $\chi(M_1 -M_1) = \chi(\mathcal O_{X}) =1$, which is as stated. 
For $r=2$, (v) reads  $\chi(M_2\otimes \cG_2^{\vee})= 1 - h^1(M_2-M_1)$, which is once again as stated, as it follows from the dual of sequence \eqref{eq:1} tensored by $M_2$.

Assume now that the formula holds up to a certain integer $r \geqslant 2 $ and we need to proving it  for $r+1$.  Dualizing \eqref{eq:1}, replacing  $r$  by  $r+1$ and tensoring it  by $M_{\epsilon_{r+1}}$
 we find that
\begin{eqnarray} \label{eq:maz3}
  \chi(M_{\epsilon_{r+1}} \otimes \cG_{r+1}^{\vee} ) & = &   \chi(M_{\epsilon_{r+1}} \otimes M_{\epsilon_{r+1}}^{\vee})+ \chi(M_{\epsilon_{r+1}}\otimes \cG_{r}^{\vee}) \\
  \nonumber & = &  \chi(\mathcal O_{X_n})+\chi(M_{\epsilon_{r+1}}\otimes \cG_{r}^{\vee}) 
 =  1+\chi(M_{\epsilon_{r+1}}\otimes \cG_{r}^{\vee}).
  \end{eqnarray}
   The  dual of sequence \eqref{eq:1}, with $r$ replaced by $r-1$, tensored by $M_{\epsilon_{r+1}}$ yields
  \begin{equation}
    \label{eq:maz4}
\chi(M_{\epsilon_{r+1}}\otimes \cG_{r}^{\vee})=\chi(M_{\epsilon_{r+1}} \otimes M_{\epsilon_{r}}^{\vee})+ \chi(M_{\epsilon_{r+1}}\otimes \cG_{r-1}^{\vee}).
\end{equation}
   Substituting \eqref{eq:maz4} into \eqref{eq:maz3} and using the fact that $r+1$ and $r-1$ have the same parity, we get 
\begin{eqnarray*}	\chi(M_{\epsilon_{r+1}} \otimes \cG_{r+1}^{\vee} ) = 1 + \chi(M_{\epsilon_{r+1}} \otimes M_{\epsilon_{r}}^{\vee}) + 
\chi(M_{\epsilon_{r-1}}\otimes \cG_{r-1}^{\vee}).\end{eqnarray*}

If $r$ is even,  then $\chi(M_{\epsilon_{r+1}} \otimes M_{\epsilon_{r}}^{\vee}) = \chi(M_1-M_2) = - h^1(M_1-M_2)$ 
whereas, from the inductive hypothesis with $r-1$ odd, $\chi(M_{\epsilon_{r-1}}\otimes \cG_{r-1}^{\vee}) = 1 + \frac{(r-2)}{2} (1 - h^1(M_1-M_2))$. Thus
\begin{eqnarray*}\chi(M_{\epsilon_{r+1}} \otimes \cG_{r+1}^{\vee} ) = 1 -  h^1(M_1-M_2) + 1 + \frac{(r-2)}{2} (1 - h^1(M_1-M_2)),
\end{eqnarray*}
the latter 
equals $1 +  \frac{r}{2} (1 - h^1(M_1-M_2))$, proving that the formula holds also for $r+1$ odd.

If $r$ is odd, the strategy is similar; in this case one has 
$\chi(M_{\epsilon_{r+1}} \otimes M_{\epsilon_{r}}^{\vee}) = \chi(M_2-M_1) = - h^1(M_2-M_1)$ and, by the inductive hypothesis with $r-1$ even,
$\chi(M_{\epsilon_{r-1}}\otimes \cG_{r-1}^{\vee}) = \frac{(r-1)}{2} (1 - h^1(M_2-M_1))$ so one can conclude. 

\medskip

\noindent
(vi) We first check the given formula for $r=1,2$. We have $\chi(\cG_1 \otimes \cG_1^{\vee})=\chi(M_1-M_1)=\chi(\mathcal O_{X})=1$, which fits with the given formula for $r=1$. 
From \eqref{eq:1}, with $r=2$, tensored by $\cG_2^{\vee}$ we get
\begin{equation}\label{eq:cr1}
  \chi(\cG_2 \otimes \cG_2^{\vee})=\chi(M_1 \otimes \cG_2^{\vee})+\chi(M_2 \otimes \cG_2^{\vee})\stackrel{(v)}{=} \chi(M_1 \otimes \cG_2^{\vee}) + 1 - h^1(M_2-M_1).
\end{equation} 
From the dual of  \eqref{eq:1}, with $r=2$,  tensored by $M_1$ we get
\begin{equation} \label{eq:cr2}
  \chi(M_1 \otimes \cG_2^{\vee})=\chi(M_1 -M_1)+\chi(M_1 -M_2)=\chi(\mathcal O_{X})- h^1(M_1-M_2) =1-h^1(M_1-M_2).
\end{equation} Combining \eqref{eq:cr1} and \eqref{eq:cr2}, we get 
\begin{eqnarray*}
 \chi(\cG_2 \otimes \cG_2^{\vee})= 2 -h^1(M_1-M_2) - h^1(M_2-M_1),
 \end{eqnarray*} 
 which again fits with the given formula for $r=2$.

Assume now that the given formula is valid up to a certain integer $r \geqslant2$; we need to prove it holds for $r+1$. 
From \eqref{eq:1}, in which $r$ is replaced by $r+1$, tensored by $\cG_{r+1}^{\vee}$ and successively the dual of  \eqref{eq:1}, with $r$ replaced by $r+1$,  
tensored by $\cG_r$ we get
\begin{eqnarray*}
\chi(\cG_{r+1} \otimes \cG_{r+1}^{\vee}) & = & \chi(\cG_{r} \otimes \cG_{r}^{\vee}) +\chi(\cG_{r} \otimes M_{\epsilon_{r+1}}^{\vee})+ \chi(M_{\epsilon_{r+1}} \otimes \cG_{r+1}^{\vee}).
\end{eqnarray*}

If $r$ is even, then $r+1$ is odd and $M_{\epsilon_{r+1}} = M_1$. From (v) with $(r+1)$ odd, we get 
$\chi(M_{\epsilon_{r+1}} \otimes \cG_{r+1}^{\vee}) = 1 + \frac{r}{2} (1 - h^1(M_1-M_2))$, whereas from (iv) with $r$ even
$\chi(\cG_{r} \otimes M_{\epsilon_{r+1}}^{\vee}) = \frac{r}{2} (1 - h^1(M_2-M_1))$. Finally, by the inductive hypothesis with $r$ even, 
$\chi(\cG_{r} \otimes \cG_{r}^{\vee}) = \frac{r^2}{4} (2 - h^1(M_1-M_2) - h^1(M_2-M_1))$. Summing--up the three quantities, one gets 
\begin{eqnarray*}
\chi(\cG_{r+1} \otimes \cG_{r+1}^{\vee}) = 1 + \frac{(r+1)^2-1}{4} (2 - h^1(M_1-M_2) - h^1(M_2-M_1)),
\end{eqnarray*}
proving that the formula holds for $r+1$ odd.
 
If  $r$ is odd, then 
$\chi(M_{\epsilon_{r+1}} \otimes \cG_{r+1}^{\vee}) = \frac{r+1}{2} (1 - h^1(M_2-M_1))$, as it follows from (v) with $(r+1)$ even, whereas
$\chi(\cG_{r} \otimes M_{\epsilon_{r+1}}^{\vee}) = \frac{(r+1)}{2} (1 - h^1(M_1-M_2)) - 1$, as predicted by (iv) with $r$ odd. Finally, form the inductive hypothesis with 
$r$ odd, we have 
$\chi(\cG_{r} \otimes \cG_{r}^{\vee}) = 1 + \frac{(r^2-1)}{4} (2 - h^1(M_1-M_2)- h^1(M_2-M_1))$. If we add up the three quantities, we get 
\begin{eqnarray*}
\chi(\cG_{r+1} \otimes \cG_{r+1}^{\vee}) = \frac{(r+1)^2}{4} (2 - h^1(M_1-M_2) - h^1(M_2-M_1)),
\end{eqnarray*}
finishing the proof. 
\end{proof}

Notice some fundamental remarks arising from the first step of the previous iterative contruction in \eqref{eq:1}, which turns out from the proof of Theorem \ref{prop:rk 2 simple Ulrich vctB e=0;I}. We set $\cG_1 = M_1$, which is an Ulrich line bundle w.r.t. $\xi$, of slope $\mu = \mu(M_1) = 13 t -3$; by considering 
non--trivial extensions \eqref{eq:1r1}, $\cG_2$ turned out to be a simple (so indecomposable) bundle, as it follows from \cite[Lemma\,4.2]{c-h-g-s} and from 
the fact that $\cG_1 = M_1$ and $M_{\epsilon_2} = M_2$ are both slope--stable, of the same slope $\mu = 13 t -3$ w.r.t. $\xi$ and non-isomorphic line bundles. 
By construction, $\cG_2$ turned out to be morover Ulrich, so strictly semistable, of slope $\mu = 13 t -3$. On the other hand, 
in the proof of Theorem \ref{prop:rk 2 simple Ulrich vctB e=0;I} we showed that $\cG_2$ deforms, in an irreducible modular family, to a slope-stable Ulrich bundle $\cU_2 := \cU$, of the same slope w.r.t. $\xi$ given by $\mu = 13t-3$, same Chern classes 
$c_i(\cU_2) = c_1(\cG_2)$, $1 \leqslant i \leqslant 2$. By semi-continuity in the irreducible modular family, cohomological properties as in Lemma \ref{lemma:1}-(i-ii) and Lemma \ref{lemma:2}-(iii-iv-v-vi) hold true when $\cG_2$ therein is replaced by $\cU_2$. Therefore, from $h^2(\cU_2 \otimes \cU_2^{\vee}) = 0$ and simplicity of $\cU_2$, by Proposition \ref{casanellas-hartshorne} (cf.\,also\,\cite[Prop.\,2.10]{c-h-g-s}) and dimensional computation of $h^1(\cU_2 \otimes \cU_2^{\vee})$, $\cU_2$ is a general point of the corresponding modular family and it is also a smooth point, so that the irreducible modular family is generically smooth. Up to shrinking to the open set of smooth points of such an irreducible modular family, we may consider a smooth modular family of simple, slope-stable, Ulrich bundles and the GIT-quotient relation restricted to such a smooth modular family gives rise to an \'etale cover of an open dense subset of the modular component $\mathcal M = \mathcal M(2)$ (by the very definition of modular family, cf. \cite[pp.\,1250083-9/10]{c-h-g-s}), which is therefore generically smooth of the same dimension of the modular family, i.e.  $h^1(\cU_2 \otimes \cU_2^{\vee})$, and whose general point is $[\cU_2]$, described in Theorem \ref{prop:rk 2 simple Ulrich vctB e=0;I} (cf. also {\bf Theorem B-sporadic cases} in Introduction).

By induction we can therefore assume that, up to a given integer $r \geqslant 3$, we have already constructed a generically smooth, irreducible modular component 
$\mathcal M(r-1)$ of the moduli space of bundles of rank $(r-1)$, which are Ulrich w.r.t. $\xi$, with Chern classes $c_i := c_i(\cG_{r-1})$ as in \eqref{eq:c1rcaso0} (where in the formulas therein $r$ is obviously replaced by $r-1$), for $1 \leqslant i \leqslant 3$,  and whose general point $[\cU_{r-1}] \in \mathcal M(r-1)$ is slope-stable, of slope w.r.t. $\xi$ given by 
$\mu(\cU_{r-1}) = 13t-3$ and that satiesfies Lemma \ref{lemma:1}-(i-ii) and Lemma \ref{lemma:2}-(iii-iv-v-vi). Consider therefore extensions 
\begin{equation} \label{eq:estensionM}
  0 \to \cU_{r-1} \to \cF_r \to M_{\epsilon_r} \to 0, 
\end{equation}
with $[\cU_{r-1}] \in \mathcal M(r-1)$ general and with $M_{\epsilon_r}$ defined as in \eqref{eq:jr}, \eqref{eq:1}, according to the parity of $r$. 
Notice that 
\begin{eqnarray*}
{\rm Ext}^1(M_{\epsilon_r}, \cU_{r-1}) \cong H^1(\cU_{r-1} \otimes M_{\epsilon_r}^{\vee}).
\end{eqnarray*}

\begin{lemma}\label{lem:new} In the above set-up, one has 
$$h^1(\cU_{r-1} \otimes M_{\epsilon_r}^{\vee}) \geqslant {\rm min} \{6t-4,\; 2t\} \geqslant 2.$$In particular, 
${\rm Ext}^1(M_{\epsilon_r}, \cU_{r-1})$ contains non-trivial extensions as in \eqref{eq:estensionM}.
\end{lemma}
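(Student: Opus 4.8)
The plan is to bound $h^1(\cU_{r-1}\otimes M_{\epsilon_r}^{\vee})$ from below by means of the Euler characteristic, exploiting that $\chi$ is a deformation invariant while $\cU_{r-1}$ inherits from $\cG_{r-1}$ the vanishing of its higher cohomology. Recall the inductive set-up described right before the statement: $\cU_{r-1}$ is the general member of the generically smooth, irreducible modular component $\mathcal M(r-1)$, which also contains $\cG_{r-1}$, and $\cU_{r-1}$ satisfies the conclusions of Lemma~\ref{lemma:1}-(i-ii) and of Lemma~\ref{lemma:2}-(iii-iv-v-vi). Since $M_{\epsilon_r}$ is one of the two line bundles $M_1, M_2$ of \eqref{eq:Mi}, Lemma~\ref{lemma:1}-(i) applied to $\cU_{r-1}$ yields $h^2(\cU_{r-1}\otimes M_{\epsilon_r}^{\vee})=h^3(\cU_{r-1}\otimes M_{\epsilon_r}^{\vee})=0$; equivalently, these vanishings descend from $\cG_{r-1}$ by upper semicontinuity in the flat modular family.

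Next I would observe that $\chi(\cU_{r-1}\otimes M_{\epsilon_r}^{\vee})=\chi(\cG_{r-1}\otimes M_{\epsilon_r}^{\vee})$, the right-hand side being computed by Lemma~\ref{lemma:2}-(iv) with the index $r$ there replaced by $r-1$, so that $M_{\epsilon_{(r-1)+1}}=M_{\epsilon_r}$: it equals $-(r-1)t$ when $r$ is odd and $\tfrac{r}{2}(4-6t)-1$ when $r$ is even. Combining this with the vanishings just obtained and with $h^0(\cU_{r-1}\otimes M_{\epsilon_r}^{\vee})\geqslant 0$, one gets
\[
h^1(\cU_{r-1}\otimes M_{\epsilon_r}^{\vee})
= h^0(\cU_{r-1}\otimes M_{\epsilon_r}^{\vee}) - \chi(\cU_{r-1}\otimes M_{\epsilon_r}^{\vee})
\geqslant -\chi(\cG_{r-1}\otimes M_{\epsilon_r}^{\vee}).
\]

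It then remains to verify the elementary inequality $-\chi(\cG_{r-1}\otimes M_{\epsilon_r}^{\vee})\geqslant \min\{6t-4,\,2t\}$. Note first that $\min\{6t-4,\,2t\}=2t$ for every $t\geqslant 1$. If $r\geqslant 3$ is odd, the left-hand side is $(r-1)t\geqslant 2t$ since $r-1\geqslant 2$; if $r\geqslant 4$ is even, the left-hand side is $3rt-2r+1=r(3t-2)+1\geqslant 12t-7\geqslant 2t$ for $t\geqslant 1$. In both cases the bound is at least $2$. Since $\Ext^1(M_{\epsilon_r},\cU_{r-1})\cong H^1(\cU_{r-1}\otimes M_{\epsilon_r}^{\vee})$, we conclude that this $\Ext$ space is nonzero, hence non-splitting extensions as in \eqref{eq:estensionM} exist, which is the assertion.

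There is no deep input here — the argument is purely cohomological. The only points requiring attention are the index shift when invoking Lemma~\ref{lemma:2}-(iv) together with the correct identification of $M_{\epsilon_r}$ with $M_1$ or $M_2$ according to the parity of $r$, and making sure that the inductively constructed general bundle $\cU_{r-1}$ genuinely inherits the vanishings $h^2=h^3=0$ for $\cU_{r-1}\otimes M_{\epsilon_r}^{\vee}$ from $\cG_{r-1}$ — which is exactly what is built into the inductive construction preceding the statement. Once these are settled, only the inequality of the last paragraph is needed.
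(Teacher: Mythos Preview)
Your proof is correct and in fact more efficient than the paper's. Both arguments start from the same two inputs inherited by $\cU_{r-1}$ from $\cG_{r-1}$: the vanishings $h^2=h^3=0$ for $\cU_{r-1}\otimes M_{\epsilon_r}^{\vee}$ (Lemma~\ref{lemma:1}-(i)) and the equality of Euler characteristics $\chi(\cU_{r-1}\otimes M_{\epsilon_r}^{\vee})=\chi(\cG_{r-1}\otimes M_{\epsilon_r}^{\vee})$. The paper then rewrites this as
\[
h^1(\cU_{r-1}\otimes M_{\epsilon_r}^{\vee})=h^1(\cG_{r-1}\otimes M_{\epsilon_r}^{\vee})-\bigl(h^0(\cG_{r-1}\otimes M_{\epsilon_r}^{\vee})-h^0(\cU_{r-1}\otimes M_{\epsilon_r}^{\vee})\bigr),
\]
and to control the bracketed term it proves by a separate induction that $h^0(\cG_{r-1}\otimes M_{\epsilon_r}^{\vee})\in\{0,1\}$ according to parity, and that $h^0(\cU_{r-1}\otimes M_{\epsilon_r}^{\vee})=0$ by slope-stability; this yields $h^1(\cU_{r-1}\otimes M_{\epsilon_r}^{\vee})\geqslant h^1(\cG_{r-1}\otimes M_{\epsilon_r}^{\vee})-1$, which combined with Lemma~\ref{lemma:1}-(iii) gives the stated minimum. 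You instead bypass the computation of $h^0(\cG_{r-1}\otimes M_{\epsilon_r}^{\vee})$ entirely by bounding $h^0(\cU_{r-1}\otimes M_{\epsilon_r}^{\vee})\geqslant 0$ trivially and reading off $-\chi$ directly from Lemma~\ref{lemma:2}-(iv). Since the explicit value of $h^0(\cG_{r-1}\otimes M_{\epsilon_r}^{\vee})$ is not used elsewhere in the paper, your shortcut loses nothing; the paper's extra work buys only the exact value of $h^1(\cU_{r-1}\otimes M_{\epsilon_r}^{\vee})$ rather than a lower bound, which is not needed here.
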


\begin{proof} By assumption $\cU_{r-1}$ satisfies Lemma \ref{lemma:1}-(i), so 
one has 
\begin{equation}\label{eq:porca1}
h^j(\cU_{r-1} \otimes M_{\epsilon_r}^{\vee}) = h^j(\cG_{r-1} \otimes M_{\epsilon_r}^{\vee}) = 0, \; j = 2,3.
\end{equation} Similarly, as by assumptions it also satisfies Lemma \ref{lemma:2}-(iv) 
(with $r$ replaced by $r-1$), one has 
\begin{equation}\label{eq:(*)}
\chi(\cU_{r-1} \otimes M_{\epsilon_r}^{\vee}) = \chi(\cG_{r-1} \otimes M_{\epsilon_r}^{\vee}).
\end{equation} Thus, equality in \eqref{eq:(*)}, together with \eqref{eq:porca1}, reads
\begin{eqnarray*}
h^0(\cU_{r-1} \otimes M_{\epsilon_r}^{\vee}) - h^1(\cU_{r-1} \otimes M_{\epsilon_r}^{\vee}) = h^0(\cG_{r-1} \otimes M_{\epsilon_r}^{\vee} ) - h^1( \cG_{r-1} \otimes M_{\epsilon_r}^{\vee}),
\end{eqnarray*}
namely
\begin{equation}\label{eq:porca3}
h^1(\cU_{r-1} \otimes M_{\epsilon_r}^{\vee}) = h^1( \cG_{r-1} \otimes M_{\epsilon_r}^{\vee}) - \left(h^0(\cG_{r-1} \otimes M_{\epsilon_r}^{\vee} ) - h^0(\cU_{r-1} \otimes M_{\epsilon_r}^{\vee} ) \right),
\end{equation}
where $h^1(\cG_{r-1} \otimes M_{\epsilon_r}^{\vee}) \geqslant {\rm min} \{6t-3,\;2t +1\} \geqslant 3$, as from Lemma \ref{lemma:1}-(iii) where $r$ is replaced by $r-1$. 
We claim that the following equality

\begin{equation}\label{eq:porcavacca}
h^0(\cG_{r-1} \otimes M_{\epsilon_r}^{\vee}) = 
\left\{
\begin{array}{ccl}
0 & {\rm if} &  r\; {\rm \; even} \\
1 & {\rm if} & r \; {\rm odd} 
\end{array}
\right.
\end{equation}holds true. 

Assume for a moment that \eqref{eq:porcavacca} has been proved;  since $\cU_{r-1}$ is slope-stable, 
of the same slope as $M_{\epsilon_r}$, and $\cU_{r-1}$ is not isomorphic to $M_{\epsilon_r}$, then 
$h^0(\cU_{r-1} \otimes M_{\epsilon_r}^{\vee} ) = 0$ as any non-zero homomorphism $M_{\epsilon_r} \to \cU_{r-1}$ 
should be an isomorphism. Thus, using \eqref{eq:porca3}, for any $r \geqslant 2$ 
one gets therefore 
\begin{eqnarray*}
h^1(\cU_{r-1} \otimes L_{\epsilon_r}^{\vee}) \geqslant h^1(\cG_{r-1} \otimes L_{\epsilon_r}^{\vee} ) -1
\end{eqnarray*}  
which, together with Lemma \ref{lemma:1}-(iii), proves the statement. 

Thus, we are left with the proof of \eqref{eq:porcavacca}. To prove it, we will use induction on $r$. 

If $r=2$, then $\cG_1 = M_1$, $M_{\epsilon_2} = M_2$, thus 
$h^0(\cG_1 \otimes M_2^{\vee}) = h^0(M_1-M_2) = 0$, as it follows from \eqref{extension1dualM1} and from \eqref{eq:calcoliutili2}. If otherwise $r=3$, then $\cG_{r-1} = \cG_2$ 
as in \eqref{eq:1r1} whereas $M_{\epsilon_3} = M_1$, as in \eqref{eq:jr}. Thus, tensoring \eqref{eq:1r1} by $M_1^{\vee}$, one gets
\begin{eqnarray*}
0 \to \Oc_{X} \to \cG_2 \otimes M_1^{\vee} \to M_2 - M_1 \to 0;
\end{eqnarray*}
since 
$h^0(M_2 - M_1) = 0$, from \eqref{extension1dualM2} and from \eqref{eq:calcoliutili1}, then $h^0( \cG_2 \otimes M_1^{\vee}) = h^0(\Oc_{X}) = 1$.

Assume therefore that, up to some integer $r-2 \geqslant 2$,  \eqref{eq:porcavacca} holds true and take $\cG_{r-1}$ a non-trivial extension as in \eqref{eq:1}, with $r$ 
replaced by $r-1$, namely 
\begin{equation}\label{eq:porca*}
0 \to \cG_{r-2} \to \cG_{r-1} \to M_{\epsilon_{r-1}} \to 0. 
\end{equation}

If $r$ is even, then $r-2$ is even and $r-1$ is odd, in particular $M_{\epsilon_{r-1}} = M_1$ and $M_{\epsilon_{r}} = M_2$. Thus, tensoring \eqref{eq:porca*} with $M_{\epsilon_{r}}^{\vee} = 
M_2^{\vee}$ gives 
\begin{eqnarray*}
0 \to \cG_{r-2} \otimes M_2^{\vee} \to \cG_{r-1} \otimes M_2^{\vee} \to M_1 - M_2 \to 0.
\end{eqnarray*}
Since $h^0(M_1 - M_2) = 0$ then 
\begin{eqnarray*}
h^0(\cG_{r-1} \otimes M_{\epsilon_{r}}^{\vee})  = h^0(\cG_{r-1} \otimes M_2^{\vee}) = h^0(\cG_{r-2} \otimes M_2^{\vee}).\end{eqnarray*}
On the other hand, 
by \eqref{eq:1}, with $r$ replaced by $r-2$, namely 
\begin{equation}\label{eq:porca**}
0 \to \cG_{r-3} \to \cG_{r-2} \to M_{\epsilon_{r-2}} \to 0,
\end{equation}
we have $M_{\epsilon_{r-2}} = M_2$, since $r-2$ is even as $r$ is. Thus, tensoring \eqref{eq:porca**} with $M_{\epsilon_r}^{\vee}$ and taking into account that $r$ is even, one gets 
\begin{eqnarray*}
0 \to \cG_{r-3} \otimes M_2^{\vee} \to \cG_{r-2}  \otimes M_2^{\vee}\to \Oc_{X} \to 0.
\end{eqnarray*}
Notice that 
$\cG_{r-3} \otimes M_2^{\vee} = \cG_{r-3} \otimes M_{\epsilon_{r-2}}^{\vee}$ thus, since $r-3$ is odd, $h^0(\cG_{r-3} \otimes M_2^{\vee}) = 0$ by induction and by \eqref{eq:porcavacca}. On the other hand, 
the coboundary map   
\begin{eqnarray*}
H^0(\Oc_{X}) \cong {\mathbb C} \stackrel{\partial}{\longrightarrow} H^1(\cG_{r-3} \otimes  M_2^{\vee})= H^1(\cG_{r-3} \otimes  M_{\epsilon_{r-2}}^{\vee}) \cong {\rm Ext}^1(M_{\epsilon_{r-2}}, \cG_{r-3})
\end{eqnarray*}
is non-zero since, by iterative construction, $\cG_{r-2}$ is taken to be a non-trivial extension; therefore $\partial$ is injective which implies 
$h^0(\cG_{r-2} \otimes  M_2^{\vee}) = 0$ and so $h^0(\cG_{r-1} \otimes  M_{\epsilon_r}^{\vee}) = 0$, as desired.

Assume now $r$ to be odd thus, $M_{\epsilon_{r}} = M_1$ whereas $M_{\epsilon_{r-1}} = M_2$. Tensoring \eqref{eq:porca*} with $M_1^{\vee}$ gives 
\begin{eqnarray*}
0 \to \cG_{r-2} \otimes M_1^{\vee} \to \cG_{r-1} \otimes M_1^{\vee} \to M_2 - M_1 \to 0.
\end{eqnarray*}
As $h^0(M_2 - M_1) = 0$, then 
\begin{eqnarray*}
h^0(\cG_{r-1} \otimes M_{\epsilon_{r}}^{\vee})  = h^0(\cG_{r-1} \otimes M_1^{\vee}) = h^0(\cG_{r-2} \otimes M_1^{\vee}).
\end{eqnarray*}
Since $r$ is odd, then also $r-2$ is odd and one gets
\begin{eqnarray*}
0 \to \cG_{r-3} \otimes M_1^{\vee} \to \cG_{r-2}  \otimes M_1^{\vee}\to \Oc_{X} \to 0.
\end{eqnarray*}
Notice that 
$h^0(\cG_{r-3} \otimes M_1^{\vee} ) = h^0(\cG_{r-3} \otimes M_{\epsilon_{r-2}}^{\vee}) = 1$, as it follows from \eqref{eq:porcavacca} with $r$ replaced by $r-2$ which is odd since $r$ is. 
On the other hand, the fact that $\cG_{r-2}$ arises from a non--trivial extension implies as before that the coboundary map 
\begin{eqnarray*}
H^0(\Oc_{X}) \cong {\mathbb C} \stackrel{\partial}{\longrightarrow} H^1(\cG_{r-3} \otimes  M_1^{\vee})= H^1(\cG_{r-3} \otimes  M_{\epsilon_{r-2}}^{\vee}) \cong {\rm Ext}^1(M_{\epsilon_{r-2}}, \cG_{r-3})
\end{eqnarray*}
is once again injective. This gives  $h^0(\cG_{r-2} \otimes  M_1^{\vee}) = h^0(\cG_{r-3} \otimes  M_1^{\vee}) = 1$, which implies 
 $h^0(\cG_{r-1} \otimes  M_{\epsilon_r}^{\vee}) = 1$. This concludes the proof of the Lemma.  \end{proof}

Lemma \ref{lem:new} ensures that there exist non-trivial extensions arising from \eqref{eq:estensionM}. Then one has the following consequence.

\begin{cor}\label{cor:new} For a given $r \geqslant 2$, assume that $\mathcal M(r-1) \neq \emptyset$ and  that $[\cU_{r-1}] \in \mathcal M(r-1)$ general corresponds to a rank-$r$ vector bundle, which is Ulrich w.r.t. $\xi$ and slope-stable, of slope $\mu(\cU_{r-1}) = 13t-3$ (where, for $r=2$, $\cU_1 = \cG_1 = M_1$ and 
$\mathcal M(1) = \{M_1\}$ is a singleton). Consider $M_{\epsilon_r}$ as in \eqref{eq:jr} and \eqref{eq:1}. 

Then $[\cF_r] \in {\rm Ext}^1(M_{\epsilon_{r}}, \cU_{r-1})$ general is a rank-$r$ vector bundle, which is simple, so indecomposable, Ulrich w.r.t. $\xi$, with Chern classes as in \eqref{eq:c1rcaso0}, of slope w.r.t. $\xi$ given by $\mu(\cF_r) = 13t-3$ and with $h^j(\cF_r \otimes \cF^{\vee}_r) = 0$, for $2 \leqslant j \leqslant 3$. 
\end{cor}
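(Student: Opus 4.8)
The plan is to mirror the rank-$2$ argument of Theorem \ref{prop:rk 2 simple Ulrich vctB e=0;I}, now using the cohomological bookkeeping already prepared in Lemmas \ref{lemma:1}, \ref{lemma:2} and \ref{lem:new}. First, by Lemma \ref{lem:new} the space ${\rm Ext}^1(M_{\epsilon_r},\cU_{r-1})\cong H^1(\cU_{r-1}\otimes M_{\epsilon_r}^{\vee})$ is non-zero, so a general element $[\cF_r]$ of it corresponds to a genuinely non-split extension \eqref{eq:estensionM}; since $\cU_{r-1}$ has rank $r-1$ and $M_{\epsilon_r}$ has rank $1$, the sheaf $\cF_r$ is a rank-$r$ vector bundle. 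It is Ulrich with respect to $\xi$, being an extension of the Ulrich bundles $\cU_{r-1}$ and $M_{\epsilon_r}$, whence, by \eqref{slope}, $\mu(\cF_r)=\deg(X)+g-1=13t-3$. For the Chern classes I would use Whitney's formula $c(\cF_r)=c(\cU_{r-1})\,c(M_{\epsilon_r})$: since by the inductive hypothesis $c_i(\cU_{r-1})=c_i(\cG_{r-1})$ and $\cG_r$ is, by \eqref{eq:1}, obtained from $\cG_{r-1}$ by an extension of the very same shape, one gets $c_i(\cF_r)=c_i(\cG_r)$, i.e. the classes recorded in \eqref{eq:c1rcaso0}.

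Next I would establish simplicity. Both $\cU_{r-1}$ (slope-stable by hypothesis) and the line bundle $M_{\epsilon_r}$ are slope-stable of the same slope $13t-3$ with respect to $\xi$, and they are non-isomorphic, their ranks being different; hence ${\rm Hom}(\cU_{r-1},M_{\epsilon_r})={\rm Hom}(M_{\epsilon_r},\cU_{r-1})=0$, and by \cite[Lemma\,4.2]{c-h-g-s} the non-split extension $\cF_r$ is simple, so $h^0(\cF_r\otimes\cF_r^{\vee})=1$ and in particular $\cF_r$ is indecomposable.

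The only step requiring a genuine computation is the vanishing $h^j(\cF_r\otimes\cF_r^{\vee})=0$ for $j=2,3$, which I would obtain exactly as in the proof of Lemma \ref{lemma:2}-(iii). Tensoring \eqref{eq:estensionM} by $\cF_r^{\vee}$ produces
\[
0\to \cU_{r-1}\otimes\cF_r^{\vee}\to \cF_r\otimes\cF_r^{\vee}\to M_{\epsilon_r}\otimes\cF_r^{\vee}\to 0,
\]
so it suffices to show $h^j(\cU_{r-1}\otimes\cF_r^{\vee})=h^j(M_{\epsilon_r}\otimes\cF_r^{\vee})=0$ for $j=2,3$. For the first, tensor the dual of \eqref{eq:estensionM} by $\cU_{r-1}$ to obtain $0\to \cU_{r-1}\otimes M_{\epsilon_r}^{\vee}\to \cU_{r-1}\otimes\cF_r^{\vee}\to \cU_{r-1}\otimes\cU_{r-1}^{\vee}\to 0$; here $h^j(\cU_{r-1}\otimes M_{\epsilon_r}^{\vee})=0$ for $j=2,3$ by Lemma \ref{lemma:1}-(i) and $h^j(\cU_{r-1}\otimes\cU_{r-1}^{\vee})=0$ for $j=2,3$ by Lemma \ref{lemma:2}-(iii), both applied to $\cU_{r-1}$ (which enjoys these properties by the inductive hypothesis). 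For the second, tensor the dual of \eqref{eq:estensionM} by $M_{\epsilon_r}$ to get $0\to \mathcal{O}_X\to M_{\epsilon_r}\otimes\cF_r^{\vee}\to M_{\epsilon_r}\otimes\cU_{r-1}^{\vee}\to 0$; here $h^j(\mathcal{O}_X)=0$ for $j\geqslant1$, while $h^j(M_{\epsilon_r}\otimes\cU_{r-1}^{\vee})=0$ for $j=2,3$ by Lemma \ref{lemma:1}-(ii) applied to $\cU_{r-1}$. Threading the long cohomology sequences of these three short exact sequences then yields $h^j(\cF_r\otimes\cF_r^{\vee})=0$ for $j=2,3$, which finishes the argument.

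I do not anticipate a serious obstacle: essentially all the content has been front-loaded into Lemma \ref{lem:new} (non-triviality of the extension space) and into the cohomological Lemmas \ref{lemma:1}--\ref{lemma:2}. The only point demanding care is invoking those lemmas for the general bundle $\cU_{r-1}$ of the modular component $\mathcal M(r-1)$ rather than for the iterative bundle $\cG_{r-1}$; this is legitimate precisely because, as recorded in the inductive set-up preceding the statement, semicontinuity along the irreducible modular family propagates all the relevant vanishings from $\cG_{r-1}$ to $\cU_{r-1}$.
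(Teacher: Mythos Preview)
Your proposal is correct and follows essentially the same approach as the paper's proof: Ulrichness and slope from the extension, simplicity via \cite[Lemma\,4.2]{c-h-g-s}, and the vanishing of $h^j(\cF_r\otimes\cF_r^{\vee})$ for $j=2,3$ by breaking $\cF_r\otimes\cF_r^{\vee}$ into four pieces via \eqref{eq:estensionM} and its dual, then applying Lemmas \ref{lemma:1}--\ref{lemma:2} to $\cU_{r-1}$. The only cosmetic difference is that the paper tensors the dual sequence by $\cF_r$ first (getting $M_{\epsilon_r}^{\vee}\otimes\cF_r$ and $\cU_{r-1}^{\vee}\otimes\cF_r$ as the outer terms) whereas you tensor the original sequence by $\cF_r^{\vee}$; the two decompositions are interchangeable.

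One small point: your justification that $\cU_{r-1}\not\cong M_{\epsilon_r}$ ``their ranks being different'' covers only $r\geqslant 3$. For $r=2$ both have rank $1$, and one needs instead that $\cU_1=M_1$ and $M_{\epsilon_2}=M_2$ are distinct line bundles; the paper makes this case split explicit.
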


\begin{proof} Since $\cU_{r-1}$ and $M_{\epsilon_r}$ are both Ulrich w.r.t. $\xi$, then 
it immediately follows that $\cF_r$ is of rank $r$, Ulrich w.r.t. $\xi$ and of slope 
as stated, by \eqref{slope}. 

Now, since $\cU_{r-1}$ is slope-stable, with $\cU_{r-1}$ not isomorphic to $M_{\epsilon_r}$ (if $r>2$, ${\rm rk}(\cU_{r-1}) > 1 = {\rm rk}(M_{\epsilon_r})$, if otherwise $r=2$, $\cU_{1} = M_1$ and $M_{\epsilon_2} = M_2$ are not isomorphic), and since moreover $\cU_{r-1}$ and $M_{\epsilon_r}$ have the same slope
$\mu = 13t -3$ then, by \cite[Lemma\;4.2]{c-h-g-s}, general $[\mathcal F_r] \in {\rm Ext}^1(M_{\epsilon_{r}}, \cU_{r-1})$ corresponds to a simple, so indecomposable, rank-$r$ vector bundle, with Chern classes as in \eqref{eq:c1rcaso0}.

To prove the assertions on cohomology groups, consider the dual sequence of \eqref{eq:estensionM} and tensor it by $\cF_r$, which gives 
$$0 \to M^{\vee}_{\epsilon_r}  \otimes \cF_r \to \cF^{\vee}_r \otimes \cF_r \to \cU^{\vee}_{r-1}  \otimes \cF_r \to 0.$$Thus, 
\begin{equation}\label{eq:cohomFr}
h^j (\cF^{\vee}_r \otimes \cF_r) \leqslant h^j(M^{\vee}_{\epsilon_r}  \otimes \cF_r ) + h^j(\cU^{\vee}_{r-1}  \otimes \cF_r ), \;2 \leqslant j \leqslant 3. 
\end{equation} On the other hand taking \eqref{eq:estensionM} tensored, respectively, by $M^{\vee}_{\epsilon_r}$ and $\cU^{\vee}_{r-1}$ gives 
$$0 \to M^{\vee}_{\epsilon_r}  \otimes \cU_{r-1} \to M^{\vee}_{\epsilon_r} \otimes \cF_r \to \mathcal O_X \to 0\;\; {\rm and}\;\; 0 \to \cU^{\vee}_{r-1} \otimes \cU_{r-1} \to \cU^{\vee}_{r-1} \otimes \cF_r \to \cU^{\vee}_{r-1} \otimes M_{\epsilon_r} \to 0,$$from which one has 
$$h^j(M^{\vee}_{\epsilon_r}  \otimes \cF_r ) \leqslant h^j(M^{\vee}_{\epsilon_r}  \otimes \cU_{r-1} ) + h^j(\mathcal O_X), \; 2 \leqslant j \leqslant 3,$$and 
$$h^j(\cU^{\vee}_{r-1} \otimes \cF_r ) \leqslant h^j(\cU^{\vee}_{r-1} \otimes \cU_{r-1} ) + h^j(\cU^{\vee}_{r-1} \otimes M_{\epsilon_r}), \; 2 \leqslant j \leqslant 3.$$Thus from $h^j(\mathcal O_X) =0$, for $j =2,3$, from Lemmas \ref{lemma:1}-(i-ii) and 
\ref{lemma:2}-(iii) and inductive assumptions on $\cU_{r-1}$,  one deduces that 
$h^j(M^{\vee}_{\epsilon_r}  \otimes \cF_r ) = h^j(\cU^{\vee}_{r-1} \otimes \cF_r ) = 0$, 
for $j=2,3$, which plugged in \eqref{eq:cohomFr} gives $h^j(\cF^{\vee}_r \otimes \cF_r)= 0$, for $2 \leqslant j \leqslant 3$, as stated. 
\end{proof}

Take therefore $[\cF_r] \in {\rm Ext}^1(M_{\epsilon_{r}}, \cU_{r-1})$ general. From 
Corollary \ref{cor:new} we know that $\cF_r$ is simple with $h^2(\cF^{\vee}_r \otimes \cF_r)= 0$. Therefore, by \cite[Proposition\;10.2]{c-h-g-s}, $\mathcal F_r$ admits a smooth modular family which, with a small abuse of notation, we denote by $\mathcal M(r)$ as the modular component of {\bf Theorem B-sporadic cases} in Introduction. Indeed, by definition of smooth modular family as in \cite[pp.\,1250083-9/10]{c-h-g-s}, an open dense subset of it will be  an \'etale cover of the modular component $\mathcal M(r)$ we are going to contruct; for this reason and to avoid heavy notation, they will be sometimes identified. 

For $r \geqslant 2$, such a smooth modular family $\mathcal M(r)$ contains a subscheme, denoted by $\mathcal M(r)^{\rm ext}$, which parametrizes bundles $\cF_r$ arising from non--trivial extensions as in \eqref{eq:estensionM}.

\begin{lemma} \label{lemma:genUr}  Let $r \geqslant2$  be an integer and let 
$\cU_r$ be a general member of the modular family $\mathcal M(r)$. Then $\cU_r$ is a vector bundle of rank $r$, which is Ulrich with respect to $\xi$, with slope w.r.t. $\xi$ given by $\mu:= \mu(\cU_r) = 13t - 3$, and with  Chern classes as \eqref{eq:c1rcaso0}. 
   
   Moreover $\cU_r$ is simple, in particular indecomposable, with 
  \begin{itemize}
\item[(i)]  $\chi(\cU_r \otimes \cU_r^{\vee})=   \begin{cases} \frac{(r^2 -1)}{4}(4 - 8t) + 1, &  \mbox{if $r$ is odd,}
      \\
		 \frac{r^2}{4} (4-8t) , & \mbox{if $r$ is even}.
    \end{cases}$
 \item[(ii)] $h^j(\cU_r \otimes \cU_r^{\vee})=0$, for $j =2,3$.  
  \end{itemize}
\end{lemma}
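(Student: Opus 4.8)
The plan is to prove Lemma~\ref{lemma:genUr} by deducing all the claimed properties from what is already known about the extension bundles $\cF_r$ together with semicontinuity in the smooth modular family $\mathcal M(r)$. First I would observe that $\cU_r$ being a vector bundle of rank $r$, Ulrich w.r.t.\ $\xi$ and of slope $\mu(\cU_r) = 13t-3$ is immediate: $\mathcal M(r)$ is a smooth modular family whose general member is a deformation of the extension bundle $\cF_r$ of Corollary~\ref{cor:new}, so $\cU_r$ has the same rank and (being invariants in a flat family) the same Chern classes as in \eqref{eq:c1rcaso0}; since Ulrichness is an open condition in an irreducible flat family (by semicontinuity, as recalled after Theorem~\ref{thm:stab}), $\cU_r$ is again Ulrich, and then its slope is forced by \eqref{slope} to be $d+g-1 = 9t+(4t-2)-1 = 13t-3$.

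Next I would address simplicity. By Corollary~\ref{cor:new} the general point $[\cF_r] \in \mathrm{Ext}^1(M_{\epsilon_r},\cU_{r-1})$ corresponds to a simple bundle, i.e.\ $h^0(\cF_r \otimes \cF_r^{\vee}) = 1$; since $\mathcal M(r)$ is obtained as a smooth modular family containing $[\cF_r]$, and $h^0(\cdot \otimes \cdot^{\vee}) \geqslant 1$ always holds while $h^0$ is upper-semicontinuous, the general member $\cU_r$ also satisfies $h^0(\cU_r \otimes \cU_r^{\vee}) = 1$, hence is simple and in particular indecomposable. For part (ii), Corollary~\ref{cor:new} gives $h^j(\cF_r \otimes \cF_r^{\vee}) = 0$ for $j = 2,3$; by upper-semicontinuity of these cohomology dimensions in the modular family, the vanishing persists at the general point, giving $h^j(\cU_r \otimes \cU_r^{\vee}) = 0$ for $j=2,3$. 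For part (i), I would note that $\chi(\cU_r \otimes \cU_r^{\vee})$ is a deformation invariant, so it equals $\chi(\cF_r \otimes \cF_r^{\vee})$, and the latter equals $\chi(\cG_r \otimes \cG_r^{\vee})$ since $\cF_r$ and $\cG_r$ have the same rank and Chern classes (both arise from the same iterative pattern \eqref{eq:estensionM}, \eqref{eq:1}); then Lemma~\ref{lemma:2}-(vi), together with $h^1(M_1-M_2) = 6t-3$ and $h^1(M_2-M_1) = 2t+1$ so that $h^1(M_1-M_2)+h^1(M_2-M_1) = 8t-2$, yields exactly the stated value $\tfrac{(r^2-1)}{4}(4-8t)+1$ for $r$ odd and $\tfrac{r^2}{4}(4-8t)$ for $r$ even.

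The only genuine subtlety is making sure the semicontinuity arguments apply at the \emph{general} point of $\mathcal M(r)$ rather than just at the specific point $[\cF_r]$: this is fine because $[\cF_r]$ is a point of the irreducible family $\mathcal M(r)$, so an open dense subset of $\mathcal M(r)$ has cohomology dimensions no larger than at $[\cF_r]$, which forces the vanishings $h^j = 0$ ($j=2,3$) and $h^0 = 1$ on that open set while $\chi$ is locally constant. One should also double-check that $\mathcal M(r)$ is irreducible — this is part of the set-up, as it is constructed as a smooth modular family through $[\cF_r]$ and an \'etale cover of the modular component we are building — so that ``general member'' is unambiguous. I do not expect any real obstacle here: the whole lemma is essentially a packaging of Corollary~\ref{cor:new}, Lemma~\ref{lemma:2}, and standard semicontinuity, with the mild bookkeeping point being the translation between the extension bundle $\cF_r$ (on which the cohomological facts are proved) and its deformation $\cU_r$ (for which they are asserted), which is exactly the mechanism already used in the $r=2$ case in the proof of Theorem~\ref{prop:rk 2 simple Ulrich vctB e=0;I}.
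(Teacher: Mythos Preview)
Your proposal is correct and follows essentially the same approach as the paper's own proof: both deduce the rank, Ulrichness, Chern classes and slope from openness/invariance in the flat family, obtain simplicity and the vanishings in (ii) by semicontinuity from Corollary~\ref{cor:new}, and derive (i) from the invariance of $\chi$ together with Lemma~\ref{lemma:2}-(vi). Your write-up is in fact slightly more detailed in spelling out the semicontinuity step and the numerical check $h^1(M_1-M_2)+h^1(M_2-M_1)=8t-2$, but the argument is the same.
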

  
\begin{proof} Since $\mathcal F_r$ is of rank $r$ and Ulrich w.r.t. $\xi$, the same holds true for the general member $[\cU_r] \in \mathcal M(r)$, since 
Ulrichness is an open property in irreducible families as $\mathcal M(r)$. In particular, from \eqref{slope}, one has $\mu(\cU_r) = \mu(\mathcal F_r) = \mu(\cU_{r-1})$. For the same reasons, Chern classes of $\cU_r$ coincide with those of $\cF_r$ which, in turn, are as in \eqref{eq:c1rcaso0}.

Since $\mathcal F_r$ is simple, as proved in Corollary \ref{cor:new}, by semi-continuity on $\mathcal M(r)$ one has also $h^0( \cU_r \otimes \cU_r^{\vee}) = 1$,  i.e. $\cU_r$ is simple, in particular it is indecomposable.

Property $(ii)$ follows by semi-continuity in the smooth modular family $\mathcal M(r)$ when $\cU_r$ specializes to $\cF_r$ and from Corollary \ref{cor:new}.

Property $(i)$ follows from Lemma \ref{lemma:2}-$(vi)$, since the given $\chi$ depends only on the Chern classes of $X$, which are fixed, and on the Chern classes of the two factors,  which in turn are those as $\cF_r$ and so of $\cG_r$ as well. 
\end{proof}

We will prove that the general member $\cU_r$ in the smooth modular family $\mathcal M(r)$ is a slope--stable bundle w.r.t. $\xi$. To prove it, we will make use of  the following auxiliary result, whose proof is identical to that of \cite[Lemma\,4.6]{fa-fl2}, to which the iterested reader is referred.

\begin{lemma} \label{lemma:uniquedest} Let $r \geqslant2$ be an integer and assume that 
the element  $\cF_r$ of the subscheme $\mathcal M(r)^{\rm ext}$ sits in a non--splitting sequence like  \eqref{eq:estensionM}, with $[\cU_{r-1}] \in \mathcal M(r-1)$ general. Then, if $\cD$ is a destabilizing subsheaf of $\cF_r$, then $\cD^{\vee} \cong  \cU_{r-1}^{\vee}$ and $(\cF_r/\cD)^{\vee} \cong M_{\epsilon_r}^{\vee}$; if furthermore $\cF_r/\cD$ is torsion--free, then  $\cD \cong  \cU_{r-1}$ and $\cF_r/\cD \cong M_{\epsilon_r}$. 
\end{lemma}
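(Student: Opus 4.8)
The plan is to mimic the argument of \cite[Lemma\,4.6]{fa-fl2}, exploiting that $\cF_r$ is an Ulrich bundle with destabilizing subsheaf of the \emph{same} slope. First I would recall that by Theorem \ref{thm:stab}-(a) every Ulrich bundle w.r.t. $\xi$ is slope-semistable, so a destabilizing subsheaf $\cD \subset \cF_r$ is in fact a subsheaf with $\mu(\cD) = \mu(\cF_r)$ and $0 < \rk(\cD) < r$; moreover, replacing $\cD$ by its saturation we may assume $\cF_r/\cD$ is torsion-free, and then Theorem \ref{thm:stab}-(b) gives that both $\cD$ and $\cF_r/\cD$ are themselves vector bundles which are Ulrich w.r.t. $\xi$. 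The strategy is now to compare $\cD$ with the two terms $\cU_{r-1}$ and $M_{\epsilon_r}$ appearing in the non-splitting sequence \eqref{eq:estensionM}, using Hom-space computations and the slope-stability of $\cU_{r-1}$ (which holds by the inductive hypothesis on $\mathcal M(r-1)$) together with simplicity of $M_{\epsilon_r}$.

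The key steps, in order, are as follows. (1) Compose the inclusion $\cD \hookrightarrow \cF_r$ with the projection $\cF_r \twoheadrightarrow M_{\epsilon_r}$ from \eqref{eq:estensionM}. If this composite is nonzero, its image is a subsheaf of the line bundle $M_{\epsilon_r}$ of the same slope (Ulrich bundles being semistable of fixed slope, every Ulrich subsheaf has slope $\leqslant \mu$, and equality forces rank-one image generically isomorphic to $M_{\epsilon_r}$); one then analyzes the induced map on duals. (2) If instead that composite is zero, then $\cD \subseteq \cU_{r-1}$; since $\cU_{r-1}$ is slope-stable of slope $\mu$ and $\mu(\cD)=\mu$, we must have $\rk(\cD) = \rk(\cU_{r-1}) = r-1$, and a nonzero map $\cD \to \cU_{r-1}$ of equal slope between a semistable sheaf and a stable bundle of the same slope and rank is an isomorphism on the level of reflexive hulls, giving $\cD^\vee \cong \cU_{r-1}^\vee$ and, when $\cF_r/\cD$ is torsion-free (hence $\cD$ is saturated), $\cD \cong \cU_{r-1}$ and correspondingly $\cF_r/\cD \cong M_{\epsilon_r}$. (3) One then rules out the remaining case in step (1): if the composite $\cD \to M_{\epsilon_r}$ were nonzero, then $\rk(\cD)=1$ and, passing to duals, $\cD^\vee \cong M_{\epsilon_r}^\vee$; but then the quotient $\cF_r/\cD$ would be a rank-$(r-1)$ Ulrich bundle admitting a surjection from $\cF_r$, and chasing \eqref{eq:estensionM} would force a splitting $\cF_r \cong M_{\epsilon_r} \oplus \cU_{r-1}$ (via the induced section), contradicting the non-splitting assumption; hence this case does not occur, and only the conclusion $\cD^\vee \cong \cU_{r-1}^\vee$, $(\cF_r/\cD)^\vee \cong M_{\epsilon_r}^\vee$ survives, refining to genuine isomorphisms when $\cF_r/\cD$ is torsion-free.

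The main obstacle is step (3): controlling the sub/quotient structure purely from slopes, since a priori a destabilizing subsheaf need not respect the extension \eqref{eq:estensionM}, and one must carefully use that $\cU_{r-1}$ is \emph{stable} (not just semistable) together with the indecomposability/simplicity of $\cF_r$ from Corollary \ref{cor:new} to exclude the ``crossed'' inclusion. The passage from isomorphism of duals (i.e.\ of reflexive hulls) to honest isomorphism of sheaves requires the torsion-freeness of $\cF_r/\cD$, which is exactly why the statement is phrased with the two levels of conclusion; on a smooth $3$-fold a saturated subsheaf of a vector bundle is reflexive but need not be locally free, so the cleanest formulation keeps the weaker ``$(-)^\vee$'' conclusion in general and upgrades it only under the torsion-free hypothesis, precisely as in \cite[Lemma\,4.6]{fa-fl2}. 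All the numerical inputs needed — the relevant $\Hom$ and $\Ext^1$ dimensions, the equality of slopes $\mu(\cU_{r-1}) = \mu(M_{\epsilon_r}) = \mu(\cF_r) = 13t-3$, and the vanishings $h^j = 0$ for $j=2,3$ — are already recorded in Lemmas \ref{lemma:1}, \ref{lemma:2}, \ref{lem:new} and Corollary \ref{cor:new}, so no new computation is required beyond organizing this dichotomy.
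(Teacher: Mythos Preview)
Your approach is exactly what the paper intends: its proof is simply a reference to \cite[Lemma\,4.6]{fa-fl2}, and you have reconstructed that argument via the dichotomy on the composite $\cD \hookrightarrow \cF_r \twoheadrightarrow M_{\epsilon_r}$, using slope-stability of $\cU_{r-1}$, Theorem~\ref{thm:stab}, and the non-splitness of \eqref{eq:estensionM}.

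One step deserves to be made explicit. In your step~(3) you assert that if the composite $\cD \to M_{\epsilon_r}$ is nonzero then $\rk(\cD)=1$, but the composite being nonzero only tells you its \emph{image} has rank~$1$. To conclude that the kernel $\cD \cap \cU_{r-1}$ vanishes you must invoke slope-stability of $\cU_{r-1}$ as follows: if the kernel were nonzero it would be a subsheaf of $\cU_{r-1}$ of rank at most $\rk(\cD)-1 < r-1$, hence of slope strictly less than $\mu$; since the image in $M_{\epsilon_r}$ has slope at most $\mu$, additivity of degrees would force $\mu(\cD)<\mu$, contradicting the destabilizing hypothesis. Once $\rk(\cD)=1$ and $\cD^{\vee\vee}\cong M_{\epsilon_r}$, the splitting contradiction is obtained by extending the inclusion $\cD\hookrightarrow\cF_r$ over the codimension-$\geqslant 2$ locus where $\cD\neq\cD^{\vee\vee}$ (Hartogs, using that $\cF_r$ is locally free) to a map $M_{\epsilon_r}\to\cF_r$ whose composite with the projection to $M_{\epsilon_r}$ is a nonzero endomorphism of a line bundle, hence an isomorphism --- this is what your phrase ``via the induced section'' should unpack to. With these two points filled in, your argument is complete and coincides with the cited one.
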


\begin{proof} See the proof of \cite[Lemma\,4.6]{fa-fl2}.        
\end{proof}

\begin{lemma} \label{lemma:dimU} Let $r \geqslant2$  be an integer. Take $[\cU_{r-1}] \in \mathcal M(r-1)$ a general point, where $\mathcal M(r-1)$ is the modular component as in {\bf Theorem B-sporadic case}. Then, the modular family $\mathcal M(r)$ as in Lemma \ref{lemma:genUr} is  generically  smooth, of dimension 
	\begin{eqnarray*}
	\dim (\mathcal M(r) ) = \begin{cases} \frac{(r^2 -1)}{4}(8t -4), & \mbox{if $r$ is odd}, \\
			 \frac{r^2}{4} (8t-4) +1 , & \mbox{if $r$ is even}.
    \end{cases}
    \end{eqnarray*}
    Furthermore $\mathcal M(r)$ properly contains the locally closed subscheme $\mathcal M(r)^{\rm ext}$, namely \linebreak 
		$\dim(\mathcal M(r)^{\rm ext}) < \dim(\mathcal M(r))$.  
\end{lemma}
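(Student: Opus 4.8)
The plan is to compute $\dim\mathcal M(r)$ via the Euler characteristic of $\cU_r\otimes\cU_r^{\vee}$, and then to bound $\dim(\mathcal M(r)^{\rm ext})$ from above by exhibiting $\mathcal M(r)^{\rm ext}$ as the image of a projective bundle over $\mathcal M(r-1)$ and comparing the two numbers.

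First, recall from Corollary \ref{cor:new} that a general $[\cF_r]\in{\rm Ext}^1(M_{\epsilon_r},\cU_{r-1})$ is a simple rank-$r$ bundle, Ulrich w.r.t. $\xi$, with $h^j(\cF_r\otimes\cF_r^{\vee})=0$ for $j=2,3$; hence by Proposition \ref{casanellas-hartshorne} it lies in a smooth modular family $\mathcal M(r)$, whose general member $\cU_r$ inherits, by semicontinuity, simplicity and the vanishings of Lemma \ref{lemma:genUr}-(ii). Since the obstruction space ${\rm Ext}^2(\cU_r,\cU_r)\cong H^2(\cU_r\otimes\cU_r^{\vee})=0$, the point $[\cU_r]$ is a smooth point of $\mathcal M(r)$ and $\dim_{[\cU_r]}\mathcal M(r)=\dim{\rm Ext}^1(\cU_r,\cU_r)=h^1(\cU_r\otimes\cU_r^{\vee})$, so $\mathcal M(r)$ is generically smooth of this dimension. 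From $h^0(\cU_r\otimes\cU_r^{\vee})=1$ (simplicity) and $h^2=h^3=0$ one gets $h^1(\cU_r\otimes\cU_r^{\vee})=1-\chi(\cU_r\otimes\cU_r^{\vee})$, and inserting the value of $\chi(\cU_r\otimes\cU_r^{\vee})$ from Lemma \ref{lemma:genUr}-(i) (equivalently Lemma \ref{lemma:2}-(vi)) gives exactly $\frac{(r^2-1)}{4}(8t-4)$ for $r$ odd and $\frac{r^2}{4}(8t-4)+1$ for $r$ even, which is the asserted $\dim\mathcal M(r)$.

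Next, to estimate $\dim(\mathcal M(r)^{\rm ext})$ I would note that this locus is the image of (an open dense subset of) the projective bundle over $\mathcal M(r-1)$ whose fibre over $[\cU_{r-1}]$ is $\mathbb P({\rm Ext}^1(M_{\epsilon_r},\cU_{r-1}))$. The relative dimension is constant: $\cU_{r-1}$ satisfies Lemma \ref{lemma:1}-(i), and being slope-stable of slope $13t-3$ and not isomorphic to $M_{\epsilon_r}$ it also has $h^0(\cU_{r-1}\otimes M_{\epsilon_r}^{\vee})=0$, so $h^1(\cU_{r-1}\otimes M_{\epsilon_r}^{\vee})$ is pinned down by $\chi$ and, arguing as in the proof of Lemma \ref{lem:new} together with Lemma \ref{lemma:2}-(ii) and \eqref{eq:porcavacca}, equals $(r-1)t$ if $r$ is odd and $3rt-2r+1$ if $r$ is even. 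Hence
\[
\dim(\mathcal M(r)^{\rm ext})\leqslant\dim\mathcal M(r-1)+h^1(\cU_{r-1}\otimes M_{\epsilon_r}^{\vee})-1 ,
\]
and, plugging in the inductive value of $\dim\mathcal M(r-1)$, a short computation shows the right-hand side equals $\dim\mathcal M(r)-(r-1)(3t-2)$ for $r$ odd and $\dim\mathcal M(r)-(rt+1)$ for $r$ even; since $r\geqslant2$ and $t\geqslant1$ both corrections are strictly positive, so $\dim(\mathcal M(r)^{\rm ext})<\dim\mathcal M(r)$.

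The Euler-characteristic arithmetic is routine once Lemma \ref{lemma:2} is in place; the step that needs the most care is the parametrization of $\mathcal M(r)^{\rm ext}$ — verifying that ${\rm Ext}^1(M_{\epsilon_r},\cU_{r-1})$ has locally constant dimension over $\mathcal M(r-1)$, so that $\mathcal M(r)^{\rm ext}$ is genuinely dominated by a projective bundle of the claimed relative dimension — and then keeping the two parity cases separate when comparing $\dim\mathcal M(r-1)+h^1(\cU_{r-1}\otimes M_{\epsilon_r}^{\vee})-1$ with $\dim\mathcal M(r)$.
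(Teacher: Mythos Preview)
Your argument is correct and follows essentially the same route as the paper: compute $\dim\mathcal M(r)=1-\chi(\cU_r\otimes\cU_r^{\vee})$ from simplicity and the vanishings of Lemma~\ref{lemma:genUr}, then bound $\dim\mathcal M(r)^{\rm ext}$ by $\dim\mathcal M(r-1)+\dim\mathbb P({\rm Ext}^1(M_{\epsilon_r},\cU_{r-1}))$ and check the resulting numerical inequality in each parity. The one difference is that the paper bounds $h^1(\cU_{r-1}\otimes M_{\epsilon_r}^{\vee})$ above by $h^1(\cG_{r-1}\otimes M_{\epsilon_r}^{\vee})$ via \eqref{eq:porca3} and Lemma~\ref{lemma:2}-(ii), whereas you compute it exactly from $h^0=0$ (slope-stability) and Lemma~\ref{lemma:2}-(iv); your version is slightly sharper and in fact shows the fibre dimension is constant, but both suffice.
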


\begin{proof} Let $\mathcal U_r$ be a general member of the smooth modular family 
$\mathcal M(r)$. From Lemma \ref{lemma:genUr}, one has  $h^0(\cU_r \otimes \cU_r^{\vee})=1$, i.e. it is simple, and $h^j(\cU_r \otimes \cU_r^{\vee})=0$ for $j=2,3$.

From the fact that $h^2(\cU_r \otimes \cU_r^{\vee})=0$, it follows that  the modular family $\mathcal M(r)$ is  generically smooth of dimension $\dim (\mathcal M(r)) = h^1(\cU_r \otimes \cU_r^{\vee})$ (cf. 
e.g. \cite[Prop.\;2.10]{c-h-g-s}). On the other hand, since $h^3(\cU_r \otimes \cU_r^{\vee})=0$ and $h^0(\cU_r \otimes \cU_r^{\vee})=1$, we have  
$h^1(\cU_r \otimes \cU_r^{\vee})  =  -\chi(\cU_r \otimes \cU_r^{\vee})+1$. Therefore, the formula concerning $\dim (\mathcal M(r))$ directly follows from Lemma \ref{lemma:genUr}-(i), since the given $\chi$ depends only on the Chern classes of $X$, which are fixed, and on the Chern classes of the two factors,  which in turn are those as $\cF_r$ and so of $\cG_r$ as well. 

Similarly $[\cU_{r-1}] \in \mathcal M(r-1)$ general is by assumptions slope-stable, so in particular simple, thus it satisfies $h^0(\cU_{r-1} \otimes \cU_{r-1}^{\vee})=1$. Thus, using Lemma \ref{lemma:genUr}-(ii), the same reasoning as above 
	shows that
  \begin{equation}\label{eq:dimUr-1}
\dim (\mathcal M(r-1))=  h^1(\cU_{r-1} \otimes \cU_{r-1}^{\vee}) = - \chi(\cU_{r-1} \otimes \cU_{r-1}^{\vee}) +1,   
\end{equation}where $\chi (\cU_{r-1} \otimes \cU_{r-1}^{\vee})$ as in Lemma \ref{lemma:genUr}-(i) (with $r$ replaced by $r-1$).   Morover, by \eqref{eq:porca3}, we have 
  \begin{equation}\label{eq:dimextv}
    \dim (\Ext^1(M_{\epsilon_r},\cU_{r-1}))= h^1(\cU_{r-1} \otimes M_{\epsilon_r}^{\vee})\leqslant h^1(\cG_{r-1} \otimes M_{\epsilon_r}^{\vee}),
  \end{equation} where the latter is as in Lemma \ref{lemma:2}-(ii) (with $r$ replaced by $r-1$). 
	Therefore, by the very definition of $\mathcal M(r)^{\rm ext}$ and by \eqref{eq:dimUr-1}-\eqref{eq:dimextv}, we have 
	\begin{eqnarray*}
  \dim (\mathcal M(r)^{\rm ext}) & \leqslant &  \dim (\mathcal M(r-1)) + \dim (\mathbb P (\Ext^1(M_{\epsilon_r},\cU_{r-1})) \\
          & = & - \chi(\cU_{r-1} \otimes \cU_{r-1}^{\vee}) +1 + h^1(\cU_{r-1} \otimes M_{\epsilon_r}^{\vee}) -1 \\
& \leqslant & - \chi(\cU_{r-1} \otimes \cU_{r-1}^{\vee}) + h^1(\cG_{r-1} \otimes M_{\epsilon_r}^{\vee}).
\end{eqnarray*} On the other hand, from the above discussion, 
\begin{eqnarray*}
\dim (\mathcal M(r)) = - \chi(\cU_{r} \otimes \cU_{r}^{\vee}) +1.
\end{eqnarray*}
Therefore to prove that $\dim (\mathcal M(r)^{\rm ext}) < \dim (\mathcal M(r))$ 
it is enough to  show that for any integer $r \geqslant2$ the following inequality 
\begin{eqnarray*} - \chi(\cU_{r-1} \otimes \cU_{r-1}^{\vee}) + h^1(\cG_{r-1} \otimes M_{\epsilon_r}^{\vee}) < - \chi(\cU_{r} \otimes \cU_{r}^{\vee}) +1
\end{eqnarray*}
holds true. 
Notice that the previous inequality reads also  
\begin{equation}\label{eq:ineqpal}
- \chi(\cU_{r} \otimes \cU_{r}^{\vee}) +1 + \chi(\cU_{r-1} \otimes \cU_{r-1}^{\vee}) - h^1(\cG_{r-1} \otimes M_{\epsilon_r}^{\vee}) >0,
\end{equation} which is satisfied for any $r \geqslant2$, as we can easily see. 

Indeed use Lemmas \ref{lemma:genUr}-(i) and \ref{lemma:2}-(ii): if $r$ is even, the left hand side of \eqref{eq:ineqpal} reads $ \frac{(r)}{2}(8t-4) + 2 + \frac{(r-2)}{2}$ which obviously is positive since 
$r \geqslant2$  and  $t \ge 1$; if $r$ is odd, then $r \geqslant3$ and the left hand side of \eqref{eq:ineqpal} reads $\frac{r-1}{2} (6t-5) + \frac{(r-3)}{2}$ which obviously is positive 
under the assumptions $r \geqslant3$, $t \geqslant1$. 
\end{proof}

We can now prove slope--stability w.r.t. $\xi$ of the general member of modular family $\mathcal M(r)$.

\begin{prop}\label{prop:slopstab} Let $r \geqslant 1$ be an integer. The general member $\cU_r$ in the modular family  $\mathcal M(r)$ is a bundle which is slope--stable w.r.t. $\xi$. 
\end{prop}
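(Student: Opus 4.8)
The plan is to argue by induction on $r$, mirroring the strategy of \cite[Prop.\,4.7]{fa-fl2}. The base case $r=1$ is immediate: $\cU_1 = \cG_1 = M_1$ is an Ulrich \emph{line} bundle, hence trivially slope-stable. For $r=2$, slope-stability of the general member of $\mathcal M(2)$ has already been established in Theorem \ref{prop:rk 2 simple Ulrich vctB e=0;I}. So assume $r \geqslant 3$ and that the general member $[\cU_{r-1}] \in \mathcal M(r-1)$ is slope-stable of slope $\mu = 13t-3$ w.r.t. $\xi$; I want to conclude the same for the general $[\cU_r] \in \mathcal M(r)$.

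First I would fix a general $[\cU_{r-1}] \in \mathcal M(r-1)$ and form the general non-trivial extension $[\cF_r] \in \Ext^1(M_{\epsilon_r}, \cU_{r-1})$ as in \eqref{eq:estensionM}; such extensions exist by Lemma \ref{lem:new}, and by Corollary \ref{cor:new} the bundle $\cF_r$ is simple (hence indecomposable), Ulrich w.r.t. $\xi$, of slope $\mu = 13t-3$, with $h^j(\cF_r \otimes \cF_r^{\vee}) = 0$ for $j = 2,3$. Suppose, for contradiction, that $\cF_r$ is \emph{not} slope-stable w.r.t. $\xi$; since $\cF_r$ is Ulrich it is slope-semistable (Theorem \ref{thm:stab}-(a)), so there is a destabilizing subsheaf $\cD \subset \cF_r$ with $\mu(\cD) = \mu(\cF_r)$ and $0 < \rk(\cD) < r$, which we may take saturated so that $\cF_r/\cD$ is torsion-free. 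By Theorem \ref{thm:stab}-(b) both $\cD$ and $\cF_r/\cD$ are then Ulrich vector bundles w.r.t. $\xi$. Now Lemma \ref{lemma:uniquedest} applies and forces $\cD \cong \cU_{r-1}$ and $\cF_r/\cD \cong M_{\epsilon_r}$; in other words the \emph{only} way $\cF_r$ can fail to be slope-stable is that the extension \eqref{eq:estensionM} splits, contradicting its non-triviality. Hence the general member of the subscheme $\mathcal M(r)^{\rm ext} \subset \mathcal M(r)$ is slope-stable.

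Finally I would pass from $\mathcal M(r)^{\rm ext}$ to the full modular family $\mathcal M(r)$. Since $\cF_r$ is simple with $h^2(\cF_r \otimes \cF_r^{\vee}) = 0$, the modular family $\mathcal M(r)$ is generically smooth of dimension $h^1(\cU_r \otimes \cU_r^{\vee})$, and by Lemma \ref{lemma:dimU} one has the \emph{strict} inequality $\dim(\mathcal M(r)^{\rm ext}) < \dim(\mathcal M(r))$. Therefore a general $[\cU_r] \in \mathcal M(r)$ lies \emph{outside} the proper closed subscheme $\overline{\mathcal M(r)^{\rm ext}}$. Slope-stability is an open condition in flat families of sheaves with fixed Hilbert polynomial: the points of $\mathcal M(r)$ parametrizing slope-stable bundles form an open subset $\mathcal M(r)^{\rm st}$, which is non-empty because it meets $\mathcal M(r)^{\rm ext}$. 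Since $\mathcal M(r)$ is irreducible, $\mathcal M(r)^{\rm st}$ is dense, so the general member $\cU_r$ is slope-stable w.r.t. $\xi$, as claimed. (One may note as a consistency check that, by Lemma \ref{lemma:genUr} and semicontinuity, $\cU_r$ retains $h^0(\cU_r\otimes\cU_r^{\vee})=1$ and $h^j(\cU_r\otimes\cU_r^{\vee})=0$ for $j=2,3$, so stability and slope-stability coincide for it by Theorem \ref{thm:stab}-(c).)

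The main obstacle is the second paragraph: ruling out \emph{every} possible destabilizing subsheaf. This is exactly where Lemma \ref{lemma:uniquedest} does the heavy lifting — it reduces the a priori large collection of destabilizing sub-line-bundles/subsheaves (which could in principle be built from any of the Ulrich line bundles $L_1, L_2, M_1, M_2$ on $X$ in Theorem \ref{thm:stab}-(b)'s Ulrich-extension picture) to the single one coming from the chosen extension, using the precise numerics of the Chern classes \eqref{eq:c1rcaso0} together with the known $\Ext^1$-dimensions. Once that rigidity is in hand, the deformation-theoretic step (enlarging from $\mathcal M(r)^{\rm ext}$ to $\mathcal M(r)$ via the strict dimension inequality of Lemma \ref{lemma:dimU}) is routine.
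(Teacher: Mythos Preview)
Your second paragraph contains a genuine error that breaks the argument. You claim that if $\cF_r$ had a destabilizing subsheaf $\cD$, then Lemma \ref{lemma:uniquedest} forces $\cD \cong \cU_{r-1}$ and $\cF_r/\cD \cong M_{\epsilon_r}$, and that ``the only way $\cF_r$ can fail to be slope-stable is that the extension \eqref{eq:estensionM} splits''. But this does not follow: $\cU_{r-1}$ \emph{is} a subsheaf of $\cF_r$ with $\mu(\cU_{r-1}) = \mu(\cF_r)$, so $\cF_r$ is \emph{never} slope-stable, regardless of whether the extension splits. Non-triviality of the extension only guarantees that $\cF_r$ is indecomposable (hence simple, via \cite[Lemma\,4.2]{c-h-g-s}); it says nothing about slope-stability. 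Every bundle in $\mathcal M(r)^{\rm ext}$ is strictly slope-semistable by construction. Consequently your claim that the slope-stable locus $\mathcal M(r)^{\rm st}$ meets $\mathcal M(r)^{\rm ext}$ is false, and the openness argument in your third paragraph has no non-emptiness input to start from.

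The paper's argument runs in the opposite direction and this is essential. One assumes, for contradiction, that the \emph{general} member of $\mathcal M(r)$ is not slope-stable, and then specializes along a one-parameter family $\{\cU_r^{(t)}\}_{t\in\Delta}$ with central fibre in $\mathcal M(r)^{\rm ext}$. The saturated destabilizing sequences for $t\neq 0$ are made to specialize (by taking the limit of $\mathbb P(\cQ^{(t)})$ inside $\mathbb P(\cU_r^{(t)})$) to a destabilizing sequence of $\cU_r^{(0)} = \cF_r$. Only now does Lemma \ref{lemma:uniquedest} enter: it identifies the limit pieces as $\cU_{r-1}$ and $M_{\epsilon_r}$, and since these are locally free one concludes that $\cD^{(t)}$ and $\cQ^{(t)}$ for $t\neq 0$ are themselves a member of $\mathcal M(r-1)$ and $M_{\epsilon_r}$. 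Hence the general member of $\mathcal M(r)$ already lies in $\mathcal M(r)^{\rm ext}$, contradicting $\dim\mathcal M(r)^{\rm ext} < \dim\mathcal M(r)$ from Lemma \ref{lemma:dimU}. The key point you are missing is that Lemma \ref{lemma:uniquedest} is not used to certify stability of $\cF_r$ (it cannot), but to force the destabilization of the \emph{general} bundle to be of the extension type.
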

\begin{proof} We use induction on $r$, the result being obviously true for $r=1$, where $\cU_1 = M_1$, $\mathcal M(1) = \{M_1\}$ is a singleton, 
and $\mathcal M(1)^{\rm ext} = \emptyset$.

Assume therefore $r \geqslant2$ and, by contradiction, that the general member of $\mathcal M(r)$ were not slope--stable, whereas the general point $[\cU_{r-1}] \in \mathcal M(r-1)$ of the modular component corresponds to a bundle which is slope-stable w.r.t. $\xi$. Then, similarly as in 
\cite[Prop.\;4.7]{cfk1}, we may find a one-parameter family of bundles $\{\cU_r^{(t)}\}$ over the unit disc $\Delta$ such that $\cU_r^{(t)}$ is a general member of $\mathcal M(r)$ for $t \neq 0$ and $\cU_r^{(0)}$ lies in $\mathcal M(r)^{\rm ext}$, and such that we have a destabilizing sequence
\begin{equation} \label{eq:destat1} 
    0 \to \cD^{(t)} \to \cU_r^{(t)} \to \cQ^{(t)} \to 0 
  \end{equation}
  for $t \neq 0$, which we can take to be saturated, that is, such that $\cQ^{(t)}$ is torsion free, whence so that $\cD^{(t)}$ and $\cQ^{(t)}$ are (Ulrich) vector bundles  (see \cite[Thm. 2.9]{c-h-g-s} or \cite[(3.2)]{b}).

  The limit of $\mathbb P(\cQ^{(t)}) \subset \mathbb P(\cU_r^{(t)})$ defines a subvariety of $\mathbb P(\cU_r^{(0)})$ of the same dimension as  $\mathbb P(\cQ^{(t)})$,   whence a coherent sheaf 
	$\cQ^{(0)}$ of rank ${\rm rk}(\cQ^{(t)})$ with a surjection $\cU_r^{(0)} \to \cQ^{(0)}$. Denoting by $\cD^{(0)}$ its kernel, we have
${\rm rk}(\cD^{(0)})={\rm rk}(\cD^{(t)})$ and $c_1(\cD^{(0)})=c_1(\cD^{(t)})$. Hence, \eqref{eq:destat1} specializes to a destabilizing sequence for $t=0$. 
Lemma \ref{lemma:uniquedest} yields  that ${\cD^{(0)}}^{\vee}$ (respectively,
${\cQ^{(0)}}^{\vee}$) is the dual of a member of $\mathcal{M}(r-1)$ (resp., the dual of $M_{\epsilon_r}$).
It follows that ${\cD^{(t)}}^{\vee}$ (resp., ${\cQ^{(t)}}^{\vee}$)
is a deformation of the dual of a member of  $\mathcal M(r-1)$  (resp., a deformation of $M_{\epsilon_r}^{\vee}$), whence that $\cD^{(t)}$ is a deformation of a member of  $\mathcal{M}(r-1)$, as both are locally free, and $\cQ^{(t)} \cong M_{\epsilon_r}$, for the same reason. 

In other words, the general member of $\mathcal M(r)$ 
is an extension of $M_{\epsilon_r}$ by a member of
 $\mathcal{M}(r-1)$. Hence $\mathcal M(r)=\mathcal M(r)^{\rm ext}$, contradicting Lemma \ref{lemma:dimU}. 
\end{proof}

The collection of the previous results gives the following

\begin{theo}\label{thm:general0}  Let  $(X, \xi) \cong \scrollcal{E}$ be a $3$-fold  scroll over $\FF_0$, with $\mathcal E = \mathcal E_0$ satisfying Assumptions \ref{ass:AB}  Let $\varphi: X \to \FF_0$ be the scroll map and $F$ be the $\varphi$-fiber. Let $r \geqslant2$ be any integer. 
Then, for any integer $t \geqslant 1$, the moduli space of rank-$r$ vector bundles $\cU_r$ on $X$ which are Ulrich w.r.t. $\xi$ and with Chern classes as in \eqref{eq:c1rcaso0}, 
    is not empty and it contains a generically smooth component $\mathcal M(r)$ of dimension 
		\begin{eqnarray*}
		\dim (\mathcal M(r) ) = \begin{cases} \frac{(r^2 -1)}{4}(8t -4), & \mbox{if $r$ is odd}, \\
			 \frac{r^2}{4} (8t-4) +1 , & \mbox{if $r$ is even}.
    \end{cases}
    \end{eqnarray*} Moreover the general point $[\cU_r] \in \mathcal M(r)$  
corresponds to a  slope-stable vector bundle, of slope w.r.t. $\xi$ given by 
$\mu(\cU_r) = 13t -3$.

In particular, there are no slope-stable-Ulrich-rank gaps on $X_0$ w.r.t. the chosen Chern classes.
\end{theo}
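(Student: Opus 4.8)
The plan is to prove Theorem~\ref{thm:general0} by induction on $r$, assembling the auxiliary results established above; the genuine content has already been isolated into the preceding lemmas, so the argument here is an orchestration of them, with the inductive step as the only place where something must be checked.

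\textbf{Base case $r=2$.} This is precisely Theorem~\ref{prop:rk 2 simple Ulrich vctB e=0;I}: the moduli space of rank-$2$ Ulrich bundles with Chern classes as in \eqref{eq:c1rcaso0} (for $r=2$) is non-empty and contains a generically smooth component $\mathcal M(2)$ of dimension $8t-3=\tfrac{2^2}{4}(8t-4)+1$, whose general point $[\cU_2]$ is slope-stable of slope $13t-3$. Moreover, as explained after the proof of Lemma~\ref{lemma:2}, semicontinuity in the modular family transfers the cohomological properties of Lemma~\ref{lemma:1}-(i)--(ii) and Lemma~\ref{lemma:2}-(iii)--(vi) from $\cG_2$ to $\cU_2$, so all the data required to start the induction are in place.

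\textbf{Inductive step.} Fix $r\geqslant 3$ and assume $\mathcal M(r-1)$ has been constructed with the asserted properties, its general point $[\cU_{r-1}]$ being a slope-stable Ulrich bundle of slope $13t-3$ with Chern classes as in \eqref{eq:c1rcaso0} (with $r-1$ in place of $r$) satisfying Lemma~\ref{lemma:1}-(i)--(ii) and Lemma~\ref{lemma:2}-(iii)--(vi). First, Lemma~\ref{lem:new} shows that $\Ext^1(M_{\epsilon_r},\cU_{r-1})\cong H^1(\cU_{r-1}\otimes M_{\epsilon_r}^{\vee})$ is non-zero, so non-splitting extensions \eqref{eq:estensionM} exist; Corollary~\ref{cor:new} then gives that a general such $\cF_r$ is a simple, hence indecomposable, rank-$r$ bundle, Ulrich with respect to $\xi$, with Chern classes as in \eqref{eq:c1rcaso0} and $h^j(\cF_r\otimes\cF_r^{\vee})=0$ for $j=2,3$. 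Since $\cF_r$ is simple with $h^2(\cF_r\otimes\cF_r^{\vee})=0$, Proposition~\ref{casanellas-hartshorne} yields a smooth modular family, which we denote $\mathcal M(r)$; by Lemma~\ref{lemma:genUr} its general member $\cU_r$ is again simple, Ulrich, with the prescribed Chern classes, $h^j(\cU_r\otimes\cU_r^{\vee})=0$ for $j=2,3$, and $\chi(\cU_r\otimes\cU_r^{\vee})$ as computed there. Lemma~\ref{lemma:dimU} upgrades this to the statement that $\mathcal M(r)$ is generically smooth of the dimension claimed, and — crucially — that the locus $\mathcal M(r)^{\rm ext}$ of bundles genuinely arising as extensions \eqref{eq:estensionM} is a proper subscheme. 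With this strict inequality available, Proposition~\ref{prop:slopstab} (the one-parameter degeneration argument, using Lemma~\ref{lemma:uniquedest} on the forced shape of a saturated destabilizing subsheaf) shows that the general $\cU_r\in\mathcal M(r)$ is slope-stable. Since slope-stability implies stability (Theorem~\ref{thm:stab}-(c)) and $h^2(\cU_r\otimes\cU_r^{\vee})=0$, the point $[\cU_r]$ is a smooth point of the moduli space of semistable sheaves on $X$ with the given Chern classes, so $\mathcal M(r)$ — identified, up to the \'etale cover provided by the modular family, with a locus in that moduli space — is a generically smooth component of the claimed dimension whose general point is a slope-stable bundle; the slope $\mu(\cU_r)=13t-3$ is forced by \eqref{slope}. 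Finally, semicontinuity transfers Lemmas~\ref{lemma:1}--\ref{lemma:2} from $\cG_r$ to $\cU_r$, closing the induction.

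\textbf{No rank gaps, and the main obstacle.} For $r=1$ we already have the slope-stable Ulrich line bundle $\cG_1=M_1$, whose Chern data coincide with \eqref{eq:c1rcaso0} for $r=1$; together with the induction this exhibits, for every $r\geqslant 1$, a slope-stable Ulrich bundle on $X$ of rank $r$ with the chosen Chern classes, so there are no slope-stable-Ulrich-rank gaps. The one step with real content is the strict inequality $\dim\mathcal M(r)^{\rm ext}<\dim\mathcal M(r)$ of Lemma~\ref{lemma:dimU}, on which slope-stability ultimately rests: one bounds $\dim\mathcal M(r)^{\rm ext}$ by $\dim\mathcal M(r-1)+\dim\mathbb P(\Ext^1(M_{\epsilon_r},\cU_{r-1}))$ and compares with $-\chi(\cU_r\otimes\cU_r^{\vee})+1$, which is exactly what forces the delicate parity-dependent bookkeeping of the various $\chi$'s and $h^1$'s recorded in Lemmas~\ref{lemma:1} and \ref{lemma:2}; everything else is assembly.
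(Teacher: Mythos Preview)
Your proposal is correct and follows essentially the same approach as the paper: the paper's own proof is a one-sentence assembly of Theorem~\ref{prop:rk 2 simple Ulrich vctB e=0;I}, \eqref{eq:c1rcaso0}, \eqref{slope}, Lemmas~\ref{lemma:genUr}, \ref{lemma:dimU} and Proposition~\ref{prop:slopstab}, together with the identification of the smooth modular family with the modular component via the \'etale GIT cover. You simply spell out the inductive orchestration of these same ingredients more explicitly than the paper does.
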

\begin{proof} It directly follows from Theorem  \ref{prop:rk 2 simple Ulrich vctB e=0;I}, \eqref{eq:c1rcaso0}, \eqref{slope} and from 
Lemmas \ref{lemma:genUr}, \ref{lemma:dimU} and Proposition \ref{prop:slopstab} where, as already mentioned, by a small abuse of notation we have used the same symbol $\mathcal M(r)$  for the smooth modular family which, via GIT-quotient, gives rise by an \'etale cover to an open, dense subset of the generically smooth, irreducible component 
of the corresponding moduli space. 
\end{proof}

%
%

\section{Higher-rank mixed Ulrich bundles on $3$-fold scrolls over $\FF_0$} 
\label{Ulrichhighermixed} 

In this section we briefly mention how to construct other positive-dimensional {\em sporadic modular components} of moduli spaces which arise by a similar approach as in \S\,\ref{Ulrich higher rk  vb}, but with the use of {\em mixed pairs} $(L_i, M_j)$, $1 \leqslant i,j \leqslant 2$, as in {\bf Theorem A}. Such components will be therefore different from those determined in {\bf Theorem B-sporadic cases}. 

Looking back to rank-$2$ cases, our starting point will be to consider the positive-dimensional component $\mathcal M:= \mathcal M(2)$, where $\mathcal M(2)$ as either in Theorem \ref{prop:rk 2 simple Ulrich vctB e=0;I}, or in Theorem \ref{prop:rk 2 simple Ulrich vctB e=0;II}-(2) and (3). 

If $[\mathcal U_2] \in \mathcal M(2)$ is a general point, then one can consider as in \eqref{eq:estensionM} extensions ${\rm Ext}^1(A, \mathcal U_2)$, where $A$ runs among all possible choices $A = L_1, L_2, M_1, M_2$ as in {\bf Theorem A}-(a). This gives rise to rank-$3$ vector bundles, which are Ulrich w.r.t. $\xi$ on $X$  and whose Chern classes are given by $$c_1:= c_1(\mathcal U_2) + c_1(A),\;\; c_2: = c_2(\mathcal U_2) + c_1(\mathcal U_2)\cdot c_1(A),\;\; c_3:= c_2(\mathcal U_2)\cdot c_1(A).$$Whenever one gets non-trivial extensions, one can compute cohomological properties of the general bundle in such an extension space similarly as done in Lemmas \ref{lemma:1}, \ref{lemma:2} and \ref{lem:new}. In such a case, reasoning as in Corollary \ref{cor:new}, such a general bundle turns out to be simple and whose cohomological properties computed implying that it sits in a smooth modular family as in Proposition \ref{casanellas-hartshorne}. 

Then one deduces properties of the general bundle $\mathcal U_3$ in the given modular family as done in Lemma \ref{lemma:genUr}. Using same strategies as in Lemma \ref{lemma:dimU} and in Proposition \ref{prop:slopstab}, such $\mathcal U_3$ gives rise to a general point of a generically smooth modular component $\mathcal M(3)$ of computed dimension. Then, one can recursively apply the same procedure to this new $\mathcal U_3$ by pairing it in extensions via an Ulrich line bundle $A$, where $A$ runs among all possible choices $A = L_1, L_2, M_1, M_2$ as in {\bf Theorem A}-(a).

It is clear from the above description that the number of possible cases to study at each step grows as the rank increases. For this reason, since the procedures to use in any  of the cases  are exactly as in the previous section, we will limit ourselves to considering one significant case giving rise to {\em sporadic modular components} which are different from those determined in {\bf Theorem B-sporadic cases}; these will be called 
{\em extra sporadic modular components}.

\bigskip

Let us therefore consider our starting step as given by $[\mathcal U_2] \in \mathcal M(2)$ general as in Theorem \ref{prop:rk 2 simple Ulrich vctB e=0;I}. In particular one has $$c_1(\cU_2)= c_1(M_1) + c_1(M_2) \;\; {\rm and} \;\; c_2(\cU_2)= c_1(M_1)\cdot c_1(M_2),$$namely $\mathcal U_2$ arises as a suitable deformation of vector bundle extensions by means of the {\em sporadic pair} $(M_1, M_2)$. We then need to take an Ulrich line bundle $A$, running among all possible choices $A = L_1, L_2, M_1, M_2$ as in {\bf Theorem A}-(a).

From what proved in \S\;\ref{Ulrich higher rk  vb}, we can avoid to extend $\cU_2$ by $A=M_1$, since this case has been already considered therein.

\smallskip

\noindent
$(i)$ if $A = L_1$, one computes that $\dim( {\rm Ext}^1(L_1, \mathcal U_2)) = 
h^1(\cU_2 (-L_1)) =0$, as it follows by semi-continuity and by \eqref{extension1} tensored by $- L_1$. Therefore, extension via $L_1$  does not provide an indecomposable bundle and we therefore get rid of this case.

\smallskip

\noindent
$(ii)$ if $A = L_2$, to compute $\dim( {\rm Ext}^1(L_2, \mathcal U_2)) = 
h^1(\cU_2 (-L_2))$ we first observe that,  since $\cU_2$ and $L_2$ are both slope-stable and of the same slope, then $h^0(\cU_2 (-L_2)) = 0$. We consider \eqref{extension1} tensored by $-L_2$ which gives 
$$0 \to M_1-L_2 = \xi +\varphi^*\Oc_{\FF_0}(0,-3t) \to \cF (-L_2) \to M_2-L_2 = - \xi+\varphi^*\Oc_{\FF_0}(1,t) \to 0,$$from which one gets 
$h^j(\cF (-L_2)) = 0$, for $2 \leqslant j \leqslant 3$, therefore the same holds true for $h^j(\cU_2 (-L_2)) = 0$, $2 \leqslant j \leqslant 3$, by semi-continuity. Thus, by the invariance of Euler characteristic in irreducible flat families, one gets $h^1(\cU_2 (-L_2)) = h^1(\cF (-L_2))$ where the latter can be computed by the previous exact sequence. Since $h^1(M_2-L_2) = h^1( - \xi+\varphi^*\Oc_{\FF_0}(1,t) ) = 0$, then 
$h^1(\cF (-L_2)) = h^1(M_1-L_2)$ which, by Leray, equals $h^1(\mathbb F_0, \mathcal E\otimes \mathcal O_{\mathbb F_0}(-3t\,f)) 
= 10t-5$, as in \eqref{eq:numerata}.

Therefore, $\dim( {\rm Ext}^1(L_2, \mathcal U_2)) = 10t-5$ so we have non-trivial extensions of rank $3$ and, as in Corollary \ref{cor:new},  $[\cF_3] \in {\rm Ext}^1(L_2, \mathcal U_2)$ general turns out to be simple, so indecomposable, Ulrich and with $$c_1(\cF_3) = c_1(\cU_2) + c_1(L_2) = c_1(M_1) + c_1(M_2) + c_1(L_2) = 3\xi + \varphi^*\Oc_{\FF_0}(3,2t-3)$$ (similar computations for the other Chern classes).

\smallskip

\noindent
$(iii)$ if otherwise $A = M_2$, as above one computes that $\dim( {\rm Ext}^1(M_2, \mathcal U_2)) = h^1(\cU_2 (-M_2)) = 6t-4$, and $[\cF'_3] \in {\rm Ext}^1(M_2, \mathcal U_2)$ general turns out to be simple, so indecomposable, Ulrich and with $$c_1(\cF'_3) = c_1(\cU_2) + c_1(M_2) 
= c_1(M_1) + 2 c_1(M_2)= 2\xi + \varphi^*\Oc_{\FF_0}(4,6t-2)$$ (similar computations for the other Chern classes).

\smallskip 

To go on, in any of the above cases,  we should now consider pairings of the general rank-$3$ with an Ulrich line bundle $A$, with $A$ running once again among all possible choices $A = L_1, L_2, M_1, M_2$ as in {\bf Theorem A}-(a), in order to get non-trivial rank-$4$ extensions and so on.

\bigskip

We will limit ourselves to perform extensions of the {\em sporadic} bundle $[\cF_3] \in {\rm Ext}^1(L_2, \mathcal U_2)$ general by means of an Ulrich line bundle $A$ chosen among e.g. {\em non-sporadic} pairs $(L_1, L_2)$. On the other hand, as observed in \S\;\ref{Ulrich higher rk  vb}, if  in the next step  we considered further extensions ${\rm Ext}^1 (L_2, \cF_3)$, taking into account the associated coboundary map it is easy to see that the dimension of such an extension space drops by one with respect to that of ${\rm Ext}^1 (L_2, \cU_2)$. Therefore, keeping $L_2$ fixed on the right side of the extensions,  after finitely many steps we would get only splitting bundles. To avoid this fact we proceed once again by taking {\em alternating extensions}, namely 
 \[0 \to \cU_2 \to \cF_3 \to L_2 \to 0,\;\;\; 0 \to \cF_3 \to \cG_4 \to L_1 \to 0,\; \ldots , 
  \]
  and so on, that is, defining
  \begin{equation} \label{eq:tr}
    \tau_r: =
    \begin{cases}
      1, & \mbox{if $r \geqslant 4$ is even}, \\
      2, & \mbox{if $r \geqslant 3$ is odd},
    \end{cases}
  \end{equation}
  we take successive $[\cG_{r}] \in \Ext^1(L_{\tau_{r}},\cG_{r-1})$, for all $r \geqslant 3$, defined by:
  \begin{equation}\label{eq:3}
0 \to \cG_{r-1} \to \cG_{r} \to L_{\tau_{r}} \to 0,
 \end{equation}where, for $r=3$, $\cG_{r-1} = \cG_2:= \cU_2$ and $\cG_r = \cG_3 = \cF_3$. In particular, from the fact that $c_1(\cU_2) = c_1(M_1)+ c_1(M_2)$ and from 
 \eqref{eq:3}, one gets $c_1(\cG_r)  =   c_1(M_1)+ c_1(M_2) + \lfloor \frac{r-1}{2}\rfloor \, c_1(L_2) + \lfloor \frac{r-2}{2}\rfloor \, c_1(L_1)$, namely
{\footnotesize
\begin{eqnarray*}
c_1(\cG_r) = 2 \xi + \varphi^*\Oc_{\FF_0}(4,6t-2) + 
\left(  \lfloor \frac{r-1}{2}\rfloor \right) (\xi + \varphi^*\Oc_{\FF_0}(-1,2t-1)) + 
\left(  \lfloor \frac{r-2}{2}\rfloor \right) (\xi + \varphi^*\Oc_{\FF_0}(2,-1)),
\end{eqnarray*}} which reads
\begin{equation} \label{eq:c1rcasob}
    c_1(\cG_r): =
    \begin{cases} 
      r \xi +\varphi^*\Oc_{\FF_0}\left(\frac{(r-3)}{2},(r-1)(t-1) +1 \right), & \mbox{if $r$ is odd}, \\
      r \xi + \varphi^*\Oc_{\FF_0}\left(\frac{r}{2}, (r-2)(t-1)\right), & \mbox{if $r$ is even},
    \end{cases}
  \end{equation} (similar formulas can be determined for the other Chern classes).

Applying similar computations as in Lemma \ref{lemma:1}, with $M_1$ and $M_2$ replaced by $L_1$ and $L_2$, we deduce as in Corollary \ref{cor:Corollario al Lemma 4.2} that there exist rank-$r$ vector bundles $\cG_r$, which are Ulrich w.r.t. $\xi$, with $c_1$ as in \eqref{eq:c1rcasob}, of slope $\mu(\cG_r) = 13t-3$ and which arise as non-trivial extensions as in \eqref{eq:3}. Applying then similar strategies as in Lemma \ref{lemma:2}, we find recursive formulas:

\begin{equation}\label{chiGr}
\chi(\cG_r \otimes \cG_r^{\vee}) = 
\chi(L^{\vee}_{\tau_{r}} \otimes \cG_{r-1}) + \chi(\cG_{r-1} \otimes \cG_{r-1}^{\vee}) 
+ \chi(\mathcal O_X) + \chi(\cG_{r-1}^{\vee} \otimes L_{\tau_{r}}). 
\end{equation}

By induction, we can argue as in Lemma \ref{lem:new} and Corollary \ref{cor:new} to get that, starting from $[\cU_{r-1}] \in \mathcal M(r-1)$ general, for any $r \geqslant 2$, corresponding to a rank-$(r-1)$ vector bundle, which is Ulrich w.r.t. $\xi$ and slope-stable, of slope $\mu(\cU_{r-1}) = 13t-3$, whose first Chern class is as in \eqref{eq:c1rcasob}, then $[\cF_r] \in {\rm Ext}^1(L_{\tau_{r}}, \cU_{r-1})$ general is a rank-$r$ vector bundle, which is simple, so indecomposable, Ulrich w.r.t. $\xi$, with first Chern class as in \eqref{eq:c1rcasob}, of the same slope and with $h^j(\cF_r \otimes \cF^{\vee}_r) = 0$, for $2 \leqslant j \leqslant 3$. 
 
 So, from Proposition \ref{casanellas-hartshorne}, $\cF_r$ sits in a smooth modular family $\mathcal M(r)$, whose general element $\cU_r$ is, reasoning as in Lemma \ref{lemma:genUr}, Ulrich w.r.t. $\xi$, of rank $r$, with slope w.r.t. $\xi$ given by $\mu:= \mu(\cU_r) = 13t - 3$, with  first Chern class as in \eqref{eq:c1rcasob}, with  $h^j(\cU_r \otimes \cU_r^{\vee})=0$, for $j =2,3$ (by semi-continuity on $\mathcal M(r)$)  and with $\chi(\cU_r \otimes \cU_r^{\vee})$ as in \eqref{chiGr} (because 
 $\cG_r$ and $\cU_r$ have same Chern classes). Using same reasoning as in Lemmas \ref{lemma:uniquedest} and \ref{lemma:dimU}, one can show that the modular family $\mathcal M(r)$ is generically smooth, of dimension 
 $$h^1(\cU_r \otimes \cU_r^{\vee}) = 1 - \chi(\cG_r \otimes \cG_r^{\vee}),$$ as it follows from the vanishings $h^j(\cU_r \otimes \cU_r^{\vee})=0$, for $j =2,3$, and $h^0(\cU_r \otimes \cU_r^{\vee})=1$, from simplicity of $\cU_r$. Thus, one can conclude similarly as in Proposition  \ref{prop:slopstab}. 
 
 Therefore, to conclude the proof of {\bf Theorem C-mixed cases}, one is reduced to computing \eqref{chiGr}. Applying similar strategies as in 
Lemma \ref{lemma:2}, from \eqref{chiGr} one gets: 
 $$\chi(\cG_r \otimes \cG_r^{\vee})= 
\begin{cases}
  \scriptstyle    \frac{r^2 (5 - 10t) - 4r +  8t+4}{4}, & \mbox{if $r$ is even}, \\
		\scriptstyle	\frac{2r^2(2-t) - (14t +11) r + (24 t + 15)}{2}, & \mbox{if $r$ is odd}
    \end{cases}.$$Therefore, $\dim(\mathcal M(r)) =  h^1(\cU_r \otimes \cU_r^{\vee}) = 1 - \chi(\cG_r \otimes \cG_r^{\vee})$ is as stated in {\bf Theorem C-mixed cases}.

  %
  %

\end{document}